\theoremstyle{plain}
\newtheorem{theorem}{Theorem}[section]
\newtheorem*{theorem*}{Theorem}
\newtheorem{proposition}{Proposition}[section]
\newtheorem{lemma}{Lemma}[section]
\newtheorem{corollary}{Corollary}[section]
\newtheorem{remark}{Remark}[section]
\theoremstyle{definition}
\newtheorem{definition}{Definition}[section]
\theoremstyle{remark}
\newtheorem{example}{Example}[section]
\begin{document}

\title{Eigenfunction expansions associated with operator differential equation depending on spectral parameter nonlinearly}

\author{Volodymyr Khrabustovskyi}

\address{Ukrainian State Academy of Railway Transport,  Kharkiv, Ukraine}

\email{v{\_}khrabustovskyi@ukr.net}

\subjclass[2000]{Primary 34B05, 34B07, 34L10}

\keywords{Relation generated by pair of differential expressions
one of which depends on spectral parameter in nonlinear manner,
non-injective resolvent, generalized resolvent}

\begin{center}
{\Large \textbf{Eigenfunction expansions associated with operator differential equation depending on spectral parameter nonlinearly}}\\\medskip {\large Volodymyr Khrabustovskyi}\\\medskip
{\small Ukrainian
State Academy of Railway Transport,  Kharkiv, Ukraine}\\
{\small v{\_}khrabustovskyi@ukr.net}
\end{center}\vspace{13px}

\begin{flushright}
\noindent \textit{Dedicated to my teacher Professor F.S. Rofe-Beketov\\ on the occasion of his jubilee}\bigskip
\end{flushright}

{\small \noindent \textbf{Abstract.} For operator differential equation which depends
on the spectral parameter in the Nevanlinna manner we obtain the expansions in eigenfunctions.\bigskip

\noindent \textbf{Keywords and phrases:} Relation generated by
pair of differential expressions one of which depends on spectral
parameter in nonlinear manner, non-injective resolvent,
generalized resolvent, Weyl type operator function and solution, eigenfunction expansion.\medskip

\noindent \textbf{2000 MSC:} 34B05, 34B07, 34L10}\bigskip

\section*{Introduction}
We consider either on finite or infinite interval operator
differential equation of arbitrary order
\begin{gather}
\label{GEQ__1_} l_\lambda[y]=m[f],\ t\in\bar{\mathcal{I}},\
\mathcal{I}=(a,b)\subseteq\mathbb{R}^1
\end{gather}
in the space of vector-functions with values in the separable
Hilbert space $\mathcal{H}$, where
\begin{gather}
\label{GEQ__2_} l_\lambda[y]=l[y]-\lambda m[y]-n_\lambda[y],
\end{gather}
$l[y],m[y]$ are symmetric operator differential expression. The
order of $l_\lambda[y]$ is equal to $r>0$. For the expression
$m[y]$ the subintegral quadratic form $m\{y,y\}$ of its Dirichlet
integral $m[y,y]=\int_{\mathcal{I}}m\{y,y\}dt$ is nonnegative for
$t\in\bar{\mathcal{I}}$. The leading coefficient of the expression
$m[y]$ may lack the inverse from $B(\mathcal{H})$ for any
$t\in\bar{\mathcal{I}}$ and even it may vanish on some intervals.
For the operator differential expression $n_\lambda[y]$ the form
$n_\lambda\{y,y\}$ depends on $\lambda$ in the Nevanlinna manner
for $t\in\bar{\mathcal{I}}$. Therefore the order $s\geq 0$ of
$m[y]$ is even and $\leq r$.

In paper \cite{KhrabMAG} (see also \cite{KhrabArxiv}) in the Hilbert space $L^2_m(\mathcal{I})$ with metrics generated
by the form $m[y,y]$ for equation
(\ref{GEQ__1_})-(\ref{GEQ__2_}) we constructed analogs
$R(\lambda)$ of the generalized resolvents which in general are
non-injective and which possess the following representation:
\begin{gather}\label{GEQ__3_}
R(\lambda)=\int_{\mathbb{R}^1}{dE_\mu\over \mu-\lambda}
\end{gather}
where $E_\mu$ is a generalized spectral family for which
$E_\infty$ is less or equal to the identity operator.

This analogue in an integro-differential operator depending on the characteristic operator of
the equation
\begin{gather}\label{GEQ__5_}
l_\lambda[y]=-{(\Im l_\lambda)[f]\over \Im\lambda},\
t\in\bar{\mathcal{I}},
\end{gather}
where $(\Im l_\lambda)[f]={1\over 2i}(l[f]-l^*[f])$. This characteristic operator was defined in \cite{KhrabArxiv,KhrabMAG}. It is an analogue of the characteristic matrix of scalar differential operator \cite{Shtraus1} (see also \cite[p. 280]{Naimark}). The operator $R(\lambda)$ in the case $n_\lambda[y]\equiv 0$ is the generalized resolvent of the minimal relation corresponding to equation \eqref{GEQ__1_} (see the details in \cite{Khrab6,KhrabMAG,KhrabArxiv}).

In this paper we calculate $E_{\Delta } $ and derive an inequality of Bessel type. In the case when the expression $n_{\lambda } \left[y\right]$ submits in a special way to the expression $m\left[y\right]$ we obtain the inversion formulae and the Parseval equality. The general results obtained in the work are illustrated on the example of equation \eqref{GEQ__1_} with coefficients which are periodic on the semi-axes. We remark that in the case $n_{\lambda } \left[y\right]\equiv 0$ it follows from  \cite{Khrab3,Khrab6} that if $\mathcal{I}={\mathbb R}^{1} ,\, r>s$ and $\dim \mathcal{H}<\infty $ then $E_{\mu } $ for equation \eqref{GEQ__1_} with periodic coefficients has no jumps. (For $r=s$ in the described case $E_{\mu } $ may have jump (see e.g. \cite{Khrab6})). We show that in contrast to the case $n_{\lambda } \left[y\right]\equiv 0$ if $r=1,\, \dim \mathcal{H}=2$ then $E_{\mu } $ for equation \eqref{GEQ__1_} with periodic coefficients on the axis may have jump.

In the case $n_{\lambda } \left[y\right]\equiv 0$ the eigenfunction expansion results above are obtained in paper \cite{Khrab6}, which contains its comparison with results, which was obtained earlier for this case.

Eigenfunction expansions for differential operators and relations are considered in the monographs \cite{CodLev,DS,Atkinson,Berez1,Berez2,LyaSto,Marchenko,RBKholkin,Sakhno}. Let us notice that for infinite systems first eigenfunction expansion results are obtained in \cite{RB60} for operator Sturm-Liouvill equation. (Later it was done in \cite{Gor} in another way). The expansion in eigenfunctions of operator equation of highest order (analogous to scalar case \cite{Shtraus1}) was obtained in \cite{Bruk1}).

Also for the case of the half-axis we obtain for equation \eqref{GEQ__1_} a generalization of the result from \cite{Shtraus2} on the expansion in solutions of scalar Sturm-Liouville equation which satisfy in the regular end-point the boundary condition depending on a spectral parameter. To do this we introduce for equation \eqref{GEQ__1_} Weyl type functons and solutions. Such solutions for operator equation of first order containing spectral parameter in Nevanlinna manner was constructed in \cite{Khrab5}. For finite canonical systems
the parametrisation of Weyl functions for such equation was obtained in \cite{Orlov} with the help of $\mathcal{J}$-theory. For these systems depending on spectral parameter in linear manner, such parametrization in another form was obtained in \cite{0} with the help of abstract Weyl function. The method of studying of differential operators and relations based on use of the abstract Weyl function and its generalization (Weyl family) was proposed in \cite{DerMalamud,DHMdS1,DHMdS2}.

A part of the results of this paper is contained in the preliminary form in preprint \cite{KhrabArxiv}. 

We denote by $(\ .\ )$ and $\|\cdot\|$ the scalar product and the
norm in various spaces with special indices if it is necessary. For differential operation $l$ we denote 
$\Re l={1\over 2}(l+l^*)$, $\Im l={1\over 2i}(l-l^*)$.

Let an interval $\Delta \subseteq \mathbb{R}^1 ,\,
f\left(t\right)\, \left(t\in \Delta \right)$ be a function with
values in some Banach space $B$. The notation $f\left(t\right)\in
C^{k} \left(\Delta,B\right),\ k=0,\, 1,\, ...$ (we omit the index
$k$ if $k=0$) means, that in any point of $\Delta$
$f\left(t\right)$ has continuous in the norm  $\left\| \, \cdot \,
\right\| _{B}$ derivatives of order up to and including $l$ that
are taken in the norm $\left\| \, \cdot \,\right\| _{B} $; if
$\Delta$ is either semi-open or closed interval then on its ends
belonging to $\Delta$ the one-side continuous derivatives exist.
The notation $f\left(t\right)\in C_{0}^{k} \left(\Delta,B\right)$
means that $f\left(t\right)\in C^{k} \left(\Delta,B \right)$ and
$f\left(t\right)=0$ in the neighbourhoods of the ends of $\Delta$.


\section{Characteristic operator. Weyl type operator function and solution}

In order to formulate the eigenfunction expansion results we present in this section several results from \cite{KhrabMAG} (see also \cite{KhrabArxiv}). Comparing with \cite{KhrabMAG,KhrabArxiv} some of these results are given here  in either more general or more weak form. Lemmas \ref{lm11},  \ref{lm12}, Theorem \ref{th12} and Corollary \ref{cor11} are new.

We consider an operator differential equation in separable Hilbert
space $\mathcal{H}_{1} $:
\begin{equation} \label{GEQ__46_}
\frac{i}{2} \left(\left(Q\left(t\right)x\left(t\right)\right)^{{'}
} +Q^{*} \left(t\right)x'\left(t\right)\right)-H_{\lambda }
\left(t\right)x\left(t\right)=W_{\lambda }
\left(t\right)F\left(t\right),\quad t\in \bar{\mathcal{I}},
\end{equation}
where $Q\left(t\right),\, \left[\Re \, Q\left(t\right)\right]^{-1}
,\, H_{\lambda } \left(t\right)\in B\left(\mathcal{H}_{1}
\right),\, Q\left(t\right)\in C^{1}
\left(\bar{\mathcal{I}},B\left(\mathcal{H}_{1} \right)\right)$;
the operator function $H_{\lambda } \left(t\right)$ is continuous
in $t$ and is Nevanlinna's in $\lambda $. Namely the following
condition holds:

(\textbf{A}) The set $\mathcal{A}\supseteq \mathbb{C}\setminus
\mathbb{R}^1$ exists, any its point have a neighbourhood
independent of $t\in \bar{\mathcal{I}}$, in this neighbourhood
$H_{\lambda } \left(t\right)$ is analytic $\forall t\in
\bar{\mathcal{I}};\, \forall \lambda \in \mathcal{A}\, H_{\lambda
} \left(t\right)=H^*_{\bar \lambda}(t)\in
C\left(\bar{\mathcal{I}},B\left(\mathcal{H}_1\right)\right)$; the
weight $W_{\lambda } \left(t\right)=\Im H_{\lambda }
\left(t\right)/\Im \lambda \ge 0\left(\Im  \lambda \ne 0\right)$.

In view of \cite{Khrab5} $\forall \mu \in \mathcal{A}\bigcap
\mathbb{R}^1:\, W_{\mu } \left(t\right)=\left.\partial H_{\lambda}
\left(t\right)/\partial \lambda\right|_{\lambda=\mu} $ is Bochner
locally integrable in the uniform operator topology.

For convenience we suppose that $0\in \bar{\mathcal{I}}$ and we
denote $\Re \, Q\left(0\right)=G$.

Let $X_{\lambda } \left(t\right)$ be the operator solution of
homogeneous equation \eqref{GEQ__46_} satisfying the initial
condition $X_{\lambda } \left(0\right)=I$, where $I$ is an
identity operator in $\mathcal{H}_{1} $. 

For any $\alpha ,\, \beta \in \bar{\mathcal{I}},\, \alpha \le
\beta $ we denote $\Delta _{\lambda } \left(\alpha ,\beta
\right)=\int _{\alpha }^{\beta }X_{\lambda }^{*}
\left(t\right)W_{\lambda } \left(t\right)X_{\lambda }
\left(t\right) dt$,\\ $N=\left\{h\in \mathcal{H}_{1} \left|h\in
Ker\Delta _{\lambda } \left(\alpha ,\beta \right)\right. \forall
\alpha ,\beta \right\},P$ is the ortho-projection onto $N^{\bot }
$. $N$ is independent of $\lambda \in \mathcal{A}$ \cite{Khrab5}.

For $x\left(t\right)\in \mathcal{H}_{1} $  we denote
$U\left[x\left(t\right)\right]=\left(\left[\Re \,
Q\left(t\right)\right]x\left(t\right),x\left(t\right)\right)$.

\begin{definition}\cite{Khrab4,Khrab5}
An analytic operator-function $M\left(\lambda \right)=M^{*}
\left(\bar{\lambda }\right)\in B\left(\mathcal{H}_{1} \right)$ of
non-real $\lambda $ is called a characteristic operator of
equation \eqref{GEQ__46_} on $\mathcal{I}$,
if for $\Im  \lambda \ne 0$ and for any $\mathcal{H}_{1} $ -
valued vector-function $F\left(t\right)\in L_{W_{\lambda } }^{2}
\left(\mathcal{I}\right)$ with compact support the corresponding
solution $x_{\lambda } \left(t\right)$ of equation
\eqref{GEQ__46_} of the form
\begin{equation} \label{GEQ__47_}
x_{\lambda } \left(t,F\right)=\mathcal{R}_\lambda F=\int
_{\mathcal{I}}X_{\lambda } \left(t\right) \left\{M\left(\lambda
\right)-\frac{1}{2} sgn\left(s-t\right)\left(iG\right)^{-1}
\right\}X_{\bar{\lambda }}^{*} \left(s\right)W_{\lambda }
\left(s\right)F\left(s\right)ds
\end{equation}
satisfies the condition
\begin{gather}\label{GEQ__47++_}
\left(\Im  \lambda \right)\mathop{\lim }\limits_{\left(\alpha
,\beta \right)\uparrow \mathcal{I}} \left(U\left[x_{\lambda }
\left(\beta ,F\right)\right]-U\left[x_{\lambda } \left(\alpha
,F\right)\right]\right)\le 0,\ \Im  \lambda \ne
0.
\end{gather}
\end{definition}

Let us note that in \cite{Khrab5} characteristic operator  was defined if
$Q(t)=Q^*(t)$. Our case is equivalent to this one since equation
\eqref{GEQ__46_} coincides with equation of
\eqref{GEQ__46_} type with $\Re Q(t)$ instead of $Q(t)$ and
with $H_\lambda(t)-{1\over 2}\Im Q'(t)$ instead of $H_\lambda(t)$.

The properties of characteristic operator and sufficient conditions of the characteristic operators
existence are obtained in \cite{Khrab4,Khrab5}.

In the case $\mathrm{dim}\mathcal{H}_1<\infty$,
$Q(t)=\mathcal{J}=\mathcal{J}^*=\mathcal{J}^{-1}$, $-\infty<a=0$
the description of characteristic operators was obtained in \cite{Orlov} (the
results of \cite{Orlov} were specified and supplemented in
\cite{Khrab+}). In the case $\mathrm{dim} \mathcal{H}_1=\infty$
and $\mathcal{I}$ is finite the description of characteristic operators was obtained
in \cite{Khrab5}. These descriptions are obtained under the
condition that
\begin{gather}\label{star8}
\exists\lambda_0\in \mathcal{A},\ [\alpha,\beta]\subseteq
\overline{\mathcal{I}}:\ \Delta_{\lambda_0}(\alpha,\beta)\gg 0.
\end{gather}

\begin{definition}\label{def+}
\cite{Khrab4,Khrab5} Let  $M\left(\lambda \right)$ be the characteristic operator of
equation (\ref{GEQ__46_}) on  $\mathcal{I} $. We say that the
corresponding condition (\ref{GEQ__47++_}) is separated for
nonreal $\lambda =\mu _{0} $ if for any $\mathcal{H}_1$-valued
vector function $f\left(t\right)\in L_{W_{\mu_{0} }(t) }^{2}
\left(\mathcal{I} \right)$ with compact support the following
inequalities holds simultaneously for the solution $x_{\mu _{0} }
\left(t\right)$ (\ref{GEQ__47_}) of equation
(\ref{GEQ__46_}):
\begin{gather}
\label{12} \displaystyle \lim\limits_{\alpha\downarrow a}\Im\mu
_{0} U\left[x_{\mu _{0} } \left(\alpha\right)\right]\ge 0,\quad
\mathop{\lim }\limits_{\beta \uparrow b} \Im\mu _{0} U\left[x_{\mu
_{0} } \left(\beta \right)\right]\le 0.
\end{gather}
\end{definition}

\begin{theorem}\cite{Khrab4,Khrab5}\label{th++}
Let $P=I$, $M\left(\lambda \right)$ be the characteristic operator of equation
(\ref{GEQ__46_}), $\mathcal{P}(\lambda)=iM(\lambda)G+{1\over 2}I$, so that we have the following representation
\begin{gather}
\label{13} \displaystyle M\left(\lambda \right)=\left(\mathcal{P}
\left(\lambda \right)-\frac{1}{2} I\right)\left(iG\right)^{-1}.
\end{gather}

Then the condition (\ref{GEQ__47++_}) corresponding to
$M\left(\lambda \right)$ is separated for $\lambda =\mu _{0} $ if
and only if the operator $\mathcal{P} \left(\mu_{0} \right)$ is
the projection, i.e.
\begin{gather*}
 \displaystyle \mathcal{P} \left(\mu _{0}
\right)=\mathcal{P} ^{2} \left(\mu _{0} \right).
\end{gather*}
\end{theorem}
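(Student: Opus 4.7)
My plan is to translate both the characteristic-operator condition \eqref{GEQ__47++_} and the separation condition \eqref{12} into purely algebraic inequalities for two quadratic forms on $\mathcal{H}_1$, and then read off $\mathcal{P}^2=\mathcal{P}$.

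First, using \eqref{GEQ__47_} together with \eqref{13}, for compactly supported $F\in L^2_{W_{\mu_0}}(\mathcal{I})$ with $\mathrm{supp}\,F\subseteq[\alpha_0,\beta_0]\subset\mathcal{I}$ one finds, whenever $\beta>\beta_0$ and $\alpha<\alpha_0$,
\begin{gather*}
x_{\mu_0}(\beta,F)=X_{\mu_0}(\beta)\mathcal{P}(\mu_0)(iG)^{-1}h,\qquad x_{\mu_0}(\alpha,F)=X_{\mu_0}(\alpha)(\mathcal{P}(\mu_0)-I)(iG)^{-1}h,
\end{gather*}
where $h=\int X^*_{\bar\mu_0}(s)W_{\mu_0}(s)F(s)\,ds$; I set $u=(iG)^{-1}h$. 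As $F$ varies, $u$ runs through a dense subset $D\subset\mathcal{H}_1$: the choice $F=\chi_{[\alpha_0,\beta_0]}X_{\bar\mu_0}(\cdot)v$ gives $h=\Delta_{\bar\mu_0}(\alpha_0,\beta_0)v$, whose total image is dense because $P=I$ means $N=\{0\}$.

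Next, the Lagrange identity for \eqref{GEQ__46_} applied to the homogeneous solution yields $\frac{d}{dt}U[X_{\mu_0}(t)v]=2\Im\mu_0(W_{\mu_0}X_{\mu_0}v,X_{\mu_0}v)$, so $\Im\mu_0\,U[X_{\mu_0}(t)v]$ is monotone in $t$ starting from $(Gv,v)$ at $t=0$, and the limits
\[
\phi_+(v):=\lim_{\beta\uparrow b}\Im\mu_0\,U[X_{\mu_0}(\beta)v],\qquad \phi_-(w):=\lim_{\alpha\downarrow a}\Im\mu_0\,U[X_{\mu_0}(\alpha)w]
\]
exist as quadratic forms on $\mathcal{H}_1$ (possibly extended-real-valued), with $\phi_+$ lower and $\phi_-$ upper semicontinuous, and
\[
\phi_+(v)-\phi_-(v)=2(\Im\mu_0)^2(\Delta_{\mu_0}(a,b)v,v),
\]
which is $\ge 0$ and vanishes iff $v\in N=\{0\}$. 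Condition \eqref{GEQ__47++_} then reads $\phi_+(\mathcal{P}u)\le\phi_-((\mathcal{P}-I)u)$, and separation reads $\phi_+(\mathcal{P}u)\le 0$ together with $\phi_-((\mathcal{P}-I)u)\ge 0$; semicontinuity and boundedness of $\mathcal{P}$ extend both from $u\in D$ to all $u\in\mathcal{H}_1$.

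For the \emph{if} direction, assume $\mathcal{P}^2=\mathcal{P}$. Then any $u$ decomposes as $u=u_1+u_2$ with $u_1=\mathcal{P}u\in\mathrm{Im}\,\mathcal{P}$, $u_2=(I-\mathcal{P})u\in\mathrm{Ker}\,\mathcal{P}$ varying independently, and $(\mathcal{P}-I)u=-u_2$, so \eqref{GEQ__47++_} becomes $\phi_+(u_1)\le\phi_-(u_2)$; setting first $u_2=0$ and then $u_1=0$ yields the two separation inequalities. For the \emph{only if} direction, substitute $u=\mathcal{P}v$ into $\phi_-((\mathcal{P}-I)u)\ge 0$ to get $\phi_-((\mathcal{P}^2-\mathcal{P})v)\ge 0$, and $u=(\mathcal{P}-I)v$ into $\phi_+(\mathcal{P}u)\le 0$ to get $\phi_+((\mathcal{P}^2-\mathcal{P})v)\le 0$. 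Applying $\phi_+\ge\phi_-$ to $r=(\mathcal{P}^2-\mathcal{P})v$ pinches $\phi_\pm(r)=0$, forcing $r\in N=\{0\}$ and hence $\mathcal{P}^2=\mathcal{P}$. The decisive algebraic step is the identity $\phi_+-\phi_-=2(\Im\mu_0)^2\Delta_{\mu_0}(a,b)$ combined with $N=\{0\}$, which converts the two-sided pinch on $(\mathcal{P}^2-\mathcal{P})v$ into a genuine equality; the main bookkeeping obstacle is carefully verifying the Lagrange identity and the semicontinuity-based extension from $D$ to $\mathcal{H}_1$.
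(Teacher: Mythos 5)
The paper does not prove Theorem \ref{th++}; it is imported from \cite{Khrab4,Khrab5}, so there is no in-text argument to compare yours against. Your proof, read on its own, is correct and self-contained. The two pillars both check out: (i) for $t$ to the right (resp.\ left) of $\operatorname{supp}F$ the kernel in \eqref{GEQ__47_} collapses to $\mathcal{P}(\mu_0)(iG)^{-1}$ (resp.\ $(\mathcal{P}(\mu_0)-I)(iG)^{-1}$), and (ii) the Lagrange identity $\tfrac{d}{dt}U[X_{\mu_0}(t)v]=2\Im\mu_0\,(W_{\mu_0}X_{\mu_0}v,X_{\mu_0}v)$ holds (the $\Re Q'$ terms cancel), so $\Im\mu_0\,U[X_{\mu_0}(t)v]$ is nondecreasing and $\phi_\pm$ exist with $\phi_+-\phi_-=2(\Im\mu_0)^2\int_a^b(W_{\mu_0}X_{\mu_0}v,X_{\mu_0}v)\,dt$, which vanishes exactly on $N=\{0\}$. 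The one step that genuinely needs the care you flag is the passage from the dense set $D$ of attainable data $u=(iG)^{-1}h$ to all of $\mathcal{H}_1$: since $\mathcal{P}u$ and $(\mathcal{P}-I)u$ need not lie in $D$, you cannot substitute them directly into inequalities known only on $D$; your semicontinuity argument closes this ($\phi_+\circ\mathcal{P}$ and $-\phi_-\circ(\mathcal{P}-I)$ are lower semicontinuous with values in $(-\infty,+\infty]$, and a l.s.c.\ function $\le 0$ on a dense set is $\le 0$ everywhere). Two cosmetic points: $\Delta_{\mu_0}(a,b)$ should be read as the possibly infinite quadratic form $\int_a^b(W_{\mu_0}X_{\mu_0}v,X_{\mu_0}v)\,dt$ rather than a bounded operator (your pinch $\phi_+(r)\le 0\le\phi_-(r)$ already forces both values finite, so no $\infty-\infty$ arises), and the value at $t=0$ is $\Im\mu_0\,(Gv,v)$, not $(Gv,v)$; neither affects the argument.
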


\begin{definition}\cite{Khrab4,Khrab5}\label{def++}
If the operator-function $M\left(\lambda \right)$ of the form
(\ref{13}) is the characteristic operator of equation (\ref{GEQ__46_}) on
$\mathcal{I} $ and, moreover, $\mathcal{P} \left(\lambda
\right)=\mathcal{P}^{2} \left(\lambda \right)$, then
$\mathcal{P}\left(\lambda \right)$ is called a characteristic
projection of equation (\ref{GEQ__46_}) on $\mathcal{I}$.
\end{definition}

The properties of characteristic projections and sufficient conditions for their
existence are obtained in \cite{Khrab5}. Also \cite{Khrab5}
contains the description of characteristic projections and abstract analogue of
Theorem \ref{th++}. Necessary and sufficient conditions for existence of characteristic operator, which corresponds to such separated boundary conditions that corresponding boundary condition in
regular point is self-adjoint, are obtained in \cite{KhrabMAG} (see also \cite{KhrabArxiv})  with the help of Theorem \ref{th++}. In the case of self-adjoint boundary conditions the analogue of
this result for regular differential operators in space of
vector-functions was proved in \cite{RB} (see also
\cite{RBKholkin}). For finite canonical systems depending on
spectral parameter in a linear manner such analogue was proved in
\cite{Mogil}. These analogues were obtained in a different way
comparing with the proof in \cite{KhrabMAG,KhrabArxiv}.


{

From this point and till the end of Corollary \ref{cor11} we suppose that $\mathcal{H}_{1} =\mathcal{H}^{2n}$,   
\begin{gather}\label{Q}
Q(t)=\left(\begin{matrix}0&iI_n\\-iI_n& 0\end{matrix}\right)=J/i,
\end{gather}
where $I_n$ is the identity operator in $\mathcal{H}^{n}$, 
$\mathcal{I}=(0,b)$, $b\leq\infty$ and condition \eqref{star8} holds. Let condition \eqref{GEQ__47++_} be separated and $\mathcal{P}(\lambda)$ be a corresponding characteristic projection. In view of \cite[p. 469]{Khrab5} the Nevanlinna pair $\left\{-a\left(\lambda \right),\, b\left(\lambda \right)\right\},\, a(\lambda),b(\lambda)\in B\left(\mathcal{H}^{n} \right)$ (see for example \cite{DHMdS2}) and Weyl function $m\left(\lambda \right)\in B\left(\mathcal{H}^{n} \right)$ of equation \eqref{GEQ__46_} on $\mathcal{I}$ \cite{Khrab5} exist such that
\begin{gather} \label{GEQ__64ad1_} 
\mathcal{P}\left(\lambda \right)=\left(\begin{array}{c} {I_{n} } \\ {m\left(\lambda \right)} \end{array}\right)\left(b^{*} \left(\bar{\lambda }\right)-a^{*} \left(\bar{\lambda }\right)m\left(\lambda \right)\right)^{-1} \left(a_{2}^{*} \left(\bar{\lambda }\right),\, -a_{1}^{*} \left(\bar{\lambda }\right)\right),  
\\ \label{GEQ__65ad1_} 
I-\mathcal{P}\left(\lambda \right)=\left(\begin{array}{c} {a\left(\lambda \right)} \\ {b\left(\lambda \right)} \end{array}\right)\left(b\left(\lambda \right)-m\left(\lambda \right)a\left(\lambda \right)\right)^{-1} \left(-m\left(\lambda \right),I_{n} \right),\\\label{13+} 
\left(b^{*} \left(\bar{\lambda }\right)-a^{*} \left(\bar{\lambda }\right)m\left(\lambda \right)\right)^{-1} ,\, \, \left(b\left(\lambda \right)-m\left(\lambda \right)a\left(\lambda \right)\right)^{-1} \in B\left(\mathcal{H}^{n} \right).
\end{gather}
(Conversely \cite{Khrab5} $\mathcal{P}\left(\lambda \right)$ \eqref{GEQ__64ad1_} is a characteristic projection for any Nevanlinna pair $\left(-a\left(\lambda \right),\, b\left(\lambda \right)\right)$ and any Weyl function $m\left(\lambda \right)$ of equation \eqref{GEQ__46_} on $\mathcal{I}$.)

Let also domain ${D}\subseteq {\mathbb C}_{+} $ be such that $\forall \lambda \in {D}:\, \, 0\in \rho \left(a\left(\lambda \right)-ib(\lambda)\right)$ (for example ${D}={\mathbb C}_{+} $ if $\exists\lambda_\pm\in\mathbb{C}_\pm$ such that $a^{*} \left(\lambda_\pm \right)b\left(\lambda_\pm \right)=b^{*} \left(\lambda_\pm\right)a\left(\lambda_\pm \right)$ ). Let domain ${ D}_{1} $ be symmetric to ${D}$ with respect to real axis. Then the operator $\mathcal{R}_{\lambda } F$ \eqref{GEQ__47_} for $\lambda \, \in { D}\bigcup { D}_{1} $ can be represented in the following form with using the operator solulion $U_{\lambda } \left(t\right)\in B\left(\mathcal{H}^{n} ,\, \mathcal{H}^{2n} \right)$ of equation \eqref{GEQ__46_}, ($F=0$) satisfying accumulative (or dissipative) initial condition and operator solution $V_{\lambda } \left(t\right)\in B\left(\mathcal{H}^{n} ,\, \mathcal{H}^{2n} \right)$ of Weyl type of the same equation. More precisely the following proposition holds.

\begin{proposition}\label{rm21}Let $\lambda \in {D}\bigcup {D}_{1} $, $\mathcal{H}_1$-valued $F(t)\in L^2_{W_\lambda}(\mathcal{I})$\footnote{Norms $\|\cdot\|_{L^2_{W_\lambda}(\mathcal{I})}$ are equivalent for $\lambda\in \mathcal{A}$ \cite{Khrab5}.}. Then solution \eqref{GEQ__47_} of equation \eqref{GEQ__46_} is equal to
\begin{gather}\label{rlambdaf}
\mathcal{R}_{\lambda } F=\int _{0}^{t}V_{\lambda } \left(t\right)U_{\bar{\lambda }}^{*} \left(s\right)W_{\lambda } \left(s\right)F\left(s\right)ds +\int _{t}^{b}U_{\lambda } \left(t\right)V_{\bar{\lambda }}^{*} \left(s\right)W_{\lambda } \left(s\right)F\left(s\right)ds,
\end{gather}
where the integrals converge strongly if the interval of integration is infinite.
Here
\begin{equation} \label{GEQ__66ad1_} 
U_{\lambda } \left(t\right)=X_{\lambda } \left(t\right)\left(\begin{array}{c} {a\left(\lambda \right)} \\ {b\left(\lambda \right)} \end{array}\right),\, \, V_{\lambda } \left(t\right)=X_{\lambda } \left(t\right)\left(\begin{array}{c} {b\left(\lambda \right)} \\ {-a\left(\lambda \right)} \end{array}\right){K} ^{-1} \left(\lambda \right)+U_{\lambda } \left(t\right)m_{a,b} \left(\lambda \right),  
\end{equation} 
where
\begin{gather} \label{GEQ__67ad1_} 
{K} \left(\lambda \right)=a^{*} \left(\bar{\lambda }\right)a\left(\lambda \right)+b^{*} \left(\bar{\lambda }\right)b\left(\lambda \right),\, \, {K} ^{-1} \left(\lambda \right)\in B\left(\mathcal{H}^{n} \right),  
\\\label{GEQ__68ad1_} 
m_{a,b} \left(\lambda \right)=m_{a,b}^{*} \left(\bar{\lambda }\right)={K} ^{-1} \left(\lambda \right)\left(a^{*} \left(\bar{\lambda }\right)+b^{*} \left(\bar{\lambda }\right)m\left(\lambda \right)\right)\left(b^{*} \left(\bar{\lambda }\right)-a^{*} \left(\bar{\lambda }\right)m\left(\lambda \right)\right)^{-1} ,  
\\ \label{add_ineq}
\int_{0}^\beta V_{\lambda }^{*} \left(t\right)W_{\lambda } \left(t\right)V_\lambda\left(t\right)dt\leq 
{\left(b(\bar\lambda)-m^*(\lambda)a(\bar\lambda)\right)^{-1}(\Im m(\lambda))(b^*(\bar{\lambda})-a^*(\bar\lambda)m(\lambda))^{-1}\over\Im\lambda}
\end{gather}
$\forall[0,\beta]\subseteq\bar{\mathcal{I}}$ and therefore
\begin{gather}\label{add_ineq+}
V_{\lambda } \left(t\right)h\in L_{W_{\lambda } \left(t\right)}^{2} \left(\mathcal{I}\right)\forall h\in \mathcal{H}^{n}.
\end{gather} 
Moreover if $a\left(\lambda \right)=a\left(\bar{\lambda }\right),\, b\left(\lambda \right)=b\left(\bar{\lambda }\right)$ as $\Im\lambda\not= 0$ then we can set $D=\mathbb{C}_+$ and
\begin{gather}\label{vlambda}
\int_{0}^\beta V_{\lambda }^{*} \left(t\right)W_{\lambda } \left(t\right)V_\lambda\left(t\right)dt \le \frac{\Im m_{a,b} \left(\lambda \right)}{\Im\lambda },\ \Im\lambda \ne 0.
\end{gather}

\end{proposition}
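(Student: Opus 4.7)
The plan is to convert \eqref{GEQ__47_} to \eqref{rlambdaf} by the algebraic substitutions \eqref{GEQ__64ad1_}, \eqref{GEQ__65ad1_}, and then to derive \eqref{add_ineq} from a Green-type identity for homogeneous solutions of \eqref{GEQ__46_} combined with the Weyl boundary behaviour of $V_\lambda$ at $b$.

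First I would split the $s$-integral in \eqref{GEQ__47_} at $s=t$. Since $\mathrm{sgn}(s-t)$ equals $-1$ for $s<t$ and $+1$ for $s>t$, formula \eqref{13} collapses the middle factor to $\mathcal{P}(\lambda)(iG)^{-1}$ on $\{s<t\}$ and to $-\bigl(I-\mathcal{P}(\lambda)\bigr)(iG)^{-1}$ on $\{s>t\}$. Substituting \eqref{GEQ__65ad1_} in the latter and \eqref{GEQ__64ad1_} in the former, the left block factors become $X_\lambda(t)\begin{pmatrix}a(\lambda)\\b(\lambda)\end{pmatrix}=U_\lambda(t)$ and $X_\lambda(t)\begin{pmatrix}I_n\\m(\lambda)\end{pmatrix}\bigl(b^*(\bar\lambda)-a^*(\bar\lambda)m(\lambda)\bigr)^{-1}$ respectively. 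The second of these is identified with $V_\lambda(t)$ from \eqref{GEQ__66ad1_}, \eqref{GEQ__68ad1_} by expanding $m_{a,b}$ and using the standard normalisation identities $a(\lambda)K^{-1}(\lambda)a^*(\bar\lambda)+b(\lambda)K^{-1}(\lambda)b^*(\bar\lambda)=I_n$ and $a(\lambda)K^{-1}(\lambda)b^*(\bar\lambda)=b(\lambda)K^{-1}(\lambda)a^*(\bar\lambda)$ for the Nevanlinna pair $\{-a,b\}$. A parallel adjoint calculation on the right-hand factors, using the explicit $(iG)^{-1}$ determined by \eqref{Q} (so that e.g.\ $(-m(\lambda),I_n)(iG)^{-1}=(-I_n,-m(\lambda))$), produces $U_{\bar\lambda}^*(s)$ and $V_{\bar\lambda}^*(s)$ via the equivalent representation $V_{\bar\lambda}^*(s)=\bigl(b(\lambda)-m(\lambda)a(\lambda)\bigr)^{-1}(I_n,m(\lambda))X_{\bar\lambda}^*(s)$, and summing the two pieces yields \eqref{rlambdaf}.

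For \eqref{add_ineq} I would invoke the Green-type identity $\tfrac{d}{dt}U[y(t)]=2(\Im\lambda)(W_\lambda y,y)$ valid for any homogeneous solution $y$ of \eqref{GEQ__46_}, obtained by pairing $l_\lambda[y]=0$ with $y$ and subtracting the complex-conjugate relation, after which the factor $H_\lambda-H_\lambda^*=2i(\Im\lambda)W_\lambda$ produces the right-hand side while the $Q$-terms assemble to the derivative of $U[y]=(\Re Q\cdot y,y)$. Taking $y=V_\lambda h$ with $h\in\mathcal{H}^n$ and integrating over $[0,\beta]$ gives
\[
2(\Im\lambda)\Bigl(\!\int_0^\beta V_\lambda^*W_\lambda V_\lambda\,dt\cdot h,h\Bigr)=U[V_\lambda(\beta)h]-U[V_\lambda(0)h].
\]
Since $W_\lambda\ge 0$, the function $\beta\mapsto U[V_\lambda(\beta)h]$ is monotone in the direction of $\Im\lambda$. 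Applying the separated condition \eqref{12} to $\mathcal{R}_\lambda F$ for $F$ of compact support (past whose support $\mathcal{R}_\lambda F$ reduces to $V_\lambda(t)\int_0^\infty U_{\bar\lambda}^*W_\lambda F\,ds$) and noting that the vectors $\int U_{\bar\lambda}^*W_\lambda F\,ds$ are dense in $\mathcal{H}^n$ as $F$ varies, one gets $\lim_{\beta\uparrow b}(\Im\lambda)U[V_\lambda(\beta)h]\le 0$; combined with monotonicity this gives $(\Im\lambda)U[V_\lambda(\beta)h]\le 0$ for every $\beta\in\bar{\mathcal{I}}$. Direct evaluation of $-U[V_\lambda(0)]/(2\Im\lambda)$ via \eqref{GEQ__66ad1_}, \eqref{GEQ__68ad1_} and the form \eqref{Q} of $G$ produces the right-hand side of \eqref{add_ineq}; \eqref{add_ineq+} and the strong convergence of the integrals in \eqref{rlambdaf} on infinite $\mathcal{I}$ then follow by letting $\beta\uparrow b$ together with Cauchy--Schwarz (using that $U_{\bar\lambda}^*(s)h$ is $W_\lambda$-square integrable on compacts).

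In the symmetric case $a(\lambda)=a(\bar\lambda)$, $b(\lambda)=b(\bar\lambda)$, the Nevanlinna property of $\{-a,b\}$ forces invertibility of $a(\lambda)-ib(\lambda)$ on all of $\mathbb{C}_+$, so one may take $D=\mathbb{C}_+$; moreover, since $a^*(\bar\lambda)=a^*(\lambda)$ and $b^*(\bar\lambda)=b^*(\lambda)$, a short calculation from \eqref{GEQ__68ad1_} gives $\Im m_{a,b}(\lambda)=\bigl(b^*(\bar\lambda)-a^*(\bar\lambda)m(\lambda)\bigr)^{-*}\,\Im m(\lambda)\,\bigl(b^*(\bar\lambda)-a^*(\bar\lambda)m(\lambda)\bigr)^{-1}$, whence \eqref{vlambda} follows from \eqref{add_ineq}. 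The principal technical obstacle is justifying the Weyl-type bound $(\Im\lambda)U[V_\lambda(\beta)h]\le 0$ uniformly in $\beta$, which requires both the density argument and care at the singular end $b$; the remaining work is essentially $2\times 2$ block bookkeeping over $\mathcal{H}^n$.
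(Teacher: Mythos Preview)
Your outline is close to the paper's argument, but the step you label ``standard normalisation identities''
\[
a(\lambda)K^{-1}(\lambda)a^*(\bar\lambda)+b(\lambda)K^{-1}(\lambda)b^*(\bar\lambda)=I_n,\qquad
a(\lambda)K^{-1}(\lambda)b^*(\bar\lambda)=b(\lambda)K^{-1}(\lambda)a^*(\bar\lambda)
\]
is not standard and is in fact the heart of the matter. These relations (and the very existence of $K^{-1}(\lambda)\in B(\mathcal{H}^n)$, which is part of \eqref{GEQ__67ad1_}) are equivalent to the single identity $a^*(\bar\lambda)b(\lambda)=b^*(\bar\lambda)a(\lambda)$, and this does \emph{not} follow from the bare Nevanlinna--pair axioms. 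The paper obtains it by first invoking the structural relation $\mathcal{P}^*(\bar\lambda)G\mathcal{P}(\lambda)=0$ for characteristic projections (from \cite{Khrab5}) and then passing to the canonical parametrisation
\[
a(\lambda)=\mp i\bigl(u(\lambda)+I_n\bigr)S(\lambda),\qquad b(\lambda)=\bigl(u(\lambda)-I_n\bigr)S(\lambda),\qquad \lambda\in\mathbb{C}_\pm,
\]
with $u(\lambda)=u^*(\bar\lambda)$ a contraction and $S^{\pm1}\in B(\mathcal{H}^n)$; from this one checks directly that $K(\lambda)=-4S^*(\bar\lambda)u(\lambda)S(\lambda)$, that $K^{-1}$ exists precisely on $D\cup D_1$, that the two formulas for $V_\lambda(0)$ coincide, and that $m_{a,b}(\lambda)=m_{a,b}^*(\bar\lambda)$ (which you also do not verify). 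Without this canonical form your algebraic reduction is unsupported.

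Your treatment of \eqref{add_ineq} via the Green identity $\tfrac{d}{dt}U[y]=2(\Im\lambda)(W_\lambda y,y)$ together with the separated boundary condition \eqref{12} and the density of $\{\int U_{\bar\lambda}^*W_\lambda F\}$ is correct and is essentially a rederivation of the inequality $\mathcal{P}^*(\lambda)\Delta_\lambda(0,\beta)\mathcal{P}(\lambda)\le\frac{1}{2\Im\lambda}\mathcal{P}^*(\lambda)G\mathcal{P}(\lambda)$ that the paper simply cites from \cite{Khrab5}; the density step uses \eqref{star8} (with $P=I$), which you should make explicit. For the strong convergence of the integrals in \eqref{rlambdaf} the paper likewise cites an earlier lemma rather than arguing via Cauchy--Schwarz. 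Finally, in the symmetric case the paper again uses the canonical form (here $u$ becomes unitary and $\lambda$--independent) to identify the right-hand sides of \eqref{add_ineq} and \eqref{vlambda}; your direct computation of $\Im m_{a,b}$ works once you know $a^*(\bar\lambda)b(\lambda)=b^*(\bar\lambda)a(\lambda)$, i.e.\ once the gap above is closed.
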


Proposition \ref{rm21} contains in \cite{KhrabMAG} in less complete form. Therefore we prove it here.

\begin{proof} 
In view of \eqref{13}, \eqref{GEQ__64ad1_}, \eqref{GEQ__65ad1_} $\mathcal{R}_{\lambda } F$ has a representation (\ref{rlambdaf}) where
\begin{gather} 
\label{prop_add1}
V_{\lambda } \left(t\right)=X_{\lambda } \left(t\right)\left(\begin{array}{c} {{I} _{n} } \\ {m\left(\lambda \right)} \end{array}\right)\left(a_{2}^{*} \left(\bar{\lambda }\right)-a_{1}^{*} \left(\bar{\lambda }\right)m\left(\lambda \right)\right)^{-1}.
\end{gather}

Due to Lemma 1.2 from \cite{KhrabArxiv, KhrabMAG} the integrals in \eqref{rlambdaf} converge strongly if the interval of integration is infinite.
 
In view of \cite{GorGor, Khrab5} and the fact that $\mathcal{P}^{*} \left(\bar{\lambda }\right)G\mathcal{P}\left(\lambda \right)=0$ \cite{Khrab5} one has
\begin{gather} 
\label{prop_add2}
a\left(\lambda \right)=\mp i \left(u\left(\lambda \right)+I_{n} \right)S\left(\lambda \right),\, b\left(\lambda \right)=\left(u\left(\lambda \right)-I_{n} \right)S\left(\lambda \right),\, \, \lambda \in {\mathbb C}_{\pm }
\end{gather}
where $u\left(\lambda \right)=u^{*} \left(\bar{\lambda }\right)\in B(\mathcal{H}^n)$ is some contraction, $S\left(\lambda \right),S^{-1} \left(\lambda \right)\in B\left(\mathcal{ H}^{n} \right);\, u\left(\lambda \right),\, S\left(\lambda \right)$ analytically depend on $\lambda \in \mathbb{C}\setminus\mathbb{R}^1$.

In view of \eqref{prop_add2}
\begin{gather}\label{prop_add2+}
K\left(\lambda \right)=-4S^{*} \left(\bar{\lambda }\right)u\left(\lambda \right)S\left(\lambda \right)
\end{gather}
and so $K^{-1} \left(\lambda \right)\in B\left(\mathcal{H}^n\right),\, \lambda \in {D}\bigcup {D}_{1} $ since $u^{-1} \left(\lambda \right)=-2iS\left(\lambda \right)\left(a\left(\lambda \right)-ib\left(\lambda \right)\right)^{-1} \in B\left(\mathcal{H}^{n} \right)\, \lambda \in {D}\cup D_1$.

Using (\ref{prop_add2}), (\ref{prop_add2+}) it can be directly shown that initial conditions in point $t=0$ for solutions $V_{\lambda } \left(t\right)$ \eqref{GEQ__66ad1_} and $V_{\lambda } $ (\ref{prop_add1}) coincides and that $m_{a,b} \left(\lambda \right)=m_{a,b}^{*} \left(\bar{\lambda }\right)$. So (\ref{rlambdaf})-(\ref{GEQ__68ad1_}) is proved.

In view of \cite[p. 450]{Khrab5} one has $\forall [0,\beta]\subseteq\bar{\mathcal{I}}$
\begin{gather}
\label{prop_add3}
\mathcal{P}^{*} \left(\lambda \right)\Delta _{\lambda } \left(0,\, \beta\right)\mathcal{P}\left(\lambda \right)\le \frac{1}{2{\Im}\lambda } \mathcal{P}^{*} \left(\lambda \right)G\mathcal{P}\left(\lambda \right).
\end{gather}

Now inequality (\ref{add_ineq}) (and therefore (\ref{add_ineq+})) follows from (\ref{prop_add3}) in view of (\ref{GEQ__64ad1_}), (\ref{GEQ__67ad1_}), (\ref{prop_add1}).

If $a\left(\lambda \right)=a\left(\bar{\lambda }\right),\, b\left(\lambda \right)=b\left(\bar{\lambda }\right)$ as $\Im\lambda\not= 0$, then operator $u\left(\lambda \right)$ is unitary and independent in $\lambda $ (cf. \cite{Khrab5}). Now in formulae \eqref{GEQ__67ad1_}, \eqref{GEQ__68ad1_} and the right-hand-side of \eqref{add_ineq} we substitute $a\left(\bar{\lambda }\right)$, $b\left(\bar{\lambda }\right)$ by $a\left({\lambda }\right)$, $b\left({\lambda }\right)$ and by direct calculations and with the help of (\ref{prop_add2}) we prove that right hand sides of inequalities (\ref{add_ineq}), (\ref{vlambda}) coincides. The proposition is proved.
\end{proof}

For an arbitrary Nevanlinna pair $\left\{-a\left(\lambda \right),\, b\left(\lambda \right)\right\}$ Weyl solution $V_{\lambda } \left(t\right)$ ({\ref{GEQ__66ad1_}}) does not satisfy in general to inequality (\ref{vlambda}) for $\lambda \in {D}\bigcup {D}_{1} $, and corresponding Weyl function $m_{a,\, b} \left(\lambda \right)$ (\ref{GEQ__68ad1_})  does not satisfy  the condition
\begin{gather}
 \label{mab}
\frac{{\Im}m_{a,b} \left(\lambda \right)}{{\Im}\lambda } \ge 0,\, \, \lambda \in {D}\bigcup {D}_1.
\end{gather}

But if we choose pair $\left\{-a\left(\lambda \right),\, b\left(\lambda \right)\right\}$ "in canonical way" then corresponding Weyl solution $V_{\lambda } $ ({\ref{GEQ__66ad1_}}) satisfies (\ref{vlambda}).

Namely let $v\left(\lambda \right)\in B\left(\mathcal{H}^{n} \right)$ is a contraction analically depending on $\lambda $ in domain ${D}\subseteq {\mathbb C}_{+} $ and let $v^{-1} \left(\lambda \right)\in B\left(\mathcal{ H}^{n} \right),\, \lambda \in {D}$. Let us consider the following pair $\left\{a\left(\lambda \right),\, b\left(\lambda \right)\right\}$, where
\begin{gather}\label{alambda}
a\left(\lambda \right)=-i\left(v\left(\lambda \right)+I_{n} \right),\, b\left(\lambda \right)=v\left(\lambda \right)-I_{n},\ \lambda \in {D}.
\end{gather}

Let us extend the pair $\left\{a\left(\lambda \right),b\left(\lambda \right)\right\}$ (\ref{alambda}) to the domain ${D}_{1} $ which is symmetric to ${D}$ with the respect to real axis in the following way
\begin{gather}\label{barlambda}
v\left(\lambda \right)=\left(v^{*} \left(\bar{\lambda} \right)\right)^{-1} ,\, \, \, \lambda \in {D}_1 
\end{gather}
(and therefore $v^*\left(\bar{\lambda }\right)$ is stretching as $\lambda \in {D}$). As a result we obtain the pair of \eqref{prop_add2} type with $D$ (respectively $D_1$) instead of $\mathbb{C}_+$ (respectively $\mathbb{C}_-$) and
$$u(\lambda)=\begin{cases}v(\lambda),&\lambda\in D\\v^*(\bar{\lambda}),&\lambda\in D_1\end{cases},\quad S(\lambda)=\begin{cases}I_n,&\lambda\in D\\-v(\lambda),&\lambda\in D_1\end{cases}.$$
Therefore if $\lambda \in {D}\bigcup {D}_{1} $ then for pair $\left\{a\left(\lambda \right),b\left(\lambda \right)\right\}$ \eqref{alambda}, \eqref{barlambda} the projections (\ref{GEQ__64ad1_}), (\ref{GEQ__65ad1_}) exist and therefore for operator $M\left(\lambda \right)$ (\ref{13}), (\ref{GEQ__64ad1_}) condition (\ref{GEQ__47++_}) holds and is separated.

\begin{lemma}\label{lm11}
The operator Weyl function $m_{a,b} \left(\lambda \right)$ ({\ref{GEQ__68ad1_}}) corresponding to the pair $\left\{a\left(\lambda \right),\, b\left(\lambda \right)\right\}$ (\ref{alambda}), (\ref{barlambda}) satisfies for any $h\in \mathcal{H}^{n} $ the identity 
\begin{gather}\label{e_lm11}
\Im\left( m_{a,b} \left(\lambda \right)h,h\right)=\frac{1}{4} \left\| \sqrt{v\left(\bar{\lambda }\right)v^{*} \left(\bar{\lambda }\right)-I_{n} } \left(I_{n} -im\left(\lambda \right)\right)g\right\| ^{2} +\Im\left(\, m\left(\lambda \right)g,g\right),\, \, \lambda \in {D},
\end{gather}
where $g=\left(\left(I_{n} -v^{*} \left(\bar{\lambda }\right)\right)+i\left(I_{n} +v^{*} \left(\bar{\lambda }\right)m\left(\lambda \right)\right)\right)^{-1} h$, $(\dots)^{-1}\in B(\mathcal{H}^n)$.

\end{lemma}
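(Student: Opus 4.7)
The plan is to compute $\Im m_{a,b}(\lambda)=(m_{a,b}(\lambda)-m_{a,b}^{*}(\lambda))/(2i)$ directly from the defining formula \eqref{GEQ__68ad1_} after plugging in the specific pair \eqref{alambda}, \eqref{barlambda}, and to reorganise the resulting expression so that the contribution of $\Im m(\lambda)$ separates cleanly from a term carrying the contraction defect $(v(\lambda)v^{*}(\lambda))^{-1}-I_n$.

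First I would record the pointwise data. For $\lambda\in D$ the extension \eqref{barlambda} gives $v^{*}(\bar\lambda)=v^{-1}(\lambda)$, so $\tilde a:=a^{*}(\bar\lambda)=i(v^{-1}(\lambda)+I_n)$ and $\tilde b:=b^{*}(\bar\lambda)=v^{-1}(\lambda)-I_n$. A short calculation verifies $K(\lambda)=a^{*}(\bar\lambda)a(\lambda)+b^{*}(\bar\lambda)b(\lambda)=4I_n$, hence $K^{-1}(\lambda)=I_n/4$. Setting $E:=\tilde a+\tilde b\,m(\lambda)$ and $D:=\tilde b-\tilde a\,m(\lambda)$, so that $m_{a,b}(\lambda)=ED^{-1}/4$, one has the standard factorisation
\begin{equation*}
m_{a,b}(\lambda)-m_{a,b}^{*}(\lambda)=\tfrac{1}{4}(D^{*})^{-1}(D^{*}E-E^{*}D)D^{-1}.
\end{equation*}

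The decisive step is the expansion of $D^{*}E-E^{*}D$. Writing $H:=v(\bar\lambda)v^{*}(\bar\lambda)-I_n=(v(\lambda)v^{*}(\lambda))^{-1}-I_n\ge 0$, a direct substitution gives the two building blocks
\begin{equation*}
\tilde a^{*}\tilde a+\tilde b^{*}\tilde b=2H+4I_n,\qquad \tilde b^{*}\tilde a-\tilde a^{*}\tilde b=2iH.
\end{equation*}
Grouping the terms of $D^{*}E-E^{*}D$ by powers of $m(\lambda)$ and $m^{*}(\lambda)$, and using the elementary identity $(I_n+im^{*})H(I_n-im)=(H+m^{*}Hm)+i(m^{*}H-Hm)$, one arrives at
\begin{equation*}
D^{*}E-E^{*}D=2i(I_n+im^{*}(\lambda))H(I_n-im(\lambda))+4(m(\lambda)-m^{*}(\lambda)),
\end{equation*}
which upon division by $2i$ produces the operator-level identity
\begin{equation*}
\Im m_{a,b}(\lambda)=(D^{*})^{-1}\bigl[\tfrac{1}{4}(I_n+im^{*}(\lambda))H(I_n-im(\lambda))+\Im m(\lambda)\bigr]D^{-1}.
\end{equation*}

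To conclude, I would observe that $D$ is, up to a harmless sign, precisely the operator appearing in the definition of $g$ in the lemma (its invertibility being supplied by \eqref{13+}), so setting $g:=D^{-1}h$ and taking the inner product of the last displayed identity with $h$, the rewriting $(H(I_n-im(\lambda))g,(I_n-im(\lambda))g)=\|\sqrt H(I_n-im(\lambda))g\|^{2}$ immediately yields \eqref{e_lm11}. The main obstacle is the algebraic bookkeeping in the expansion of $D^{*}E-E^{*}D$: four cross-products must be collected and matched with the target decomposition, and the two auxiliary identities above are exactly what make the right-hand side factor in the required clean form; no idea beyond careful computation is needed.
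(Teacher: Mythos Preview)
Your proof is correct and follows exactly the route the paper takes: the paper simply records the explicit form \eqref{mab1} of $m_{a,b}(\lambda)$ for the pair \eqref{alambda}, \eqref{barlambda} and states that \eqref{e_lm11} ``follows from \eqref{mab1} by direct calculation'', which is precisely the computation you carry out (your $E D^{-1}/4$ is \eqref{mab1}, and your $D$ coincides, up to the harmless sign you note, with the operator whose inverse defines $g$). Your writeup is in fact more explicit than the paper's, and the two auxiliary identities $\tilde a^{*}\tilde a+\tilde b^{*}\tilde b=2H+4I_n$, $\tilde b^{*}\tilde a-\tilde a^{*}\tilde b=2iH$ are exactly the right organising tools for the otherwise opaque ``direct calculation''.
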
 
 
\begin{proof} The Weyl function $m_{a,b} \left(\lambda \right)$ ({\ref{GEQ__68ad1_}}), (\ref{alambda}), (\ref{barlambda})  is equal to
\begin{gather}\label{mab1} m_{a,b} (\lambda )=\frac{-1}{4} \left(i
(I_{n} +v^{*} (\bar{\lambda }))-(I_{n} -v^{*} (\bar{\lambda }))m(\lambda )\right)\left(I_{n} -v^{*} (\bar{\lambda })+i(I_{n} +v^{*} (\bar{\lambda }))m(\lambda )\right)^{-1} ,\, \lambda \in {D}, 
\end{gather}
where $(\dots)^{-1}\in B(\mathcal{H}^n)$ in view of \eqref{13+}, \eqref{alambda}, \eqref{barlambda}.

Now identity (\ref{e_lm11}) follows from (\ref{mab1}) by direct calculation.
\end{proof}

In view of the fact that $m_{a,b}\left(\bar{\lambda }\right)=m_{a,b}^{*} \left(\lambda \right)$, inequality  (\ref{add_ineq}), Lemma \ref{lm11}, condition (\ref{star8}) and formula (\ref{prop_add1}) the following theorem is valid.

\begin{theorem}\label{th12}
The solution $V_{\lambda } \left(t\right)$ (\ref{GEQ__66ad1_})-(\ref{GEQ__68ad1_}), (\ref{alambda}), (\ref{barlambda}) satisfies inequality (\ref{vlambda}) for $\lambda \in {D}\bigcup {D}_{1} $ (and therefore $m_{a,b} \left(\lambda \right)$ (\ref{GEQ__68ad1_}), (\ref{alambda}), (\ref{barlambda}) satisfies inequality (\ref{mab}) with $"\gg"$ instead of $"\geq"$).
\end{theorem}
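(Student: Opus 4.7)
The plan is to sandwich $\int_0^\beta V_\lambda^*W_\lambda V_\lambda\,dt$ between the a priori estimate (\ref{add_ineq}) and $\Im m_{a,b}(\lambda)/\Im\lambda$ via identity (\ref{e_lm11}) of Lemma \ref{lm11}. The key algebraic identification is that for the canonical pair (\ref{alambda})--(\ref{barlambda}) one has, by direct computation from (\ref{alambda}),
\[
b^*(\bar\lambda)-a^*(\bar\lambda)m(\lambda)=-\bigl[(I_n-v^*(\bar\lambda))+i(I_n+v^*(\bar\lambda))m(\lambda)\bigr],
\]
so the vector $g$ appearing in Lemma \ref{lm11} equals $-(b^*(\bar\lambda)-a^*(\bar\lambda)m(\lambda))^{-1}h$. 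Since $(b(\bar\lambda)-m^*(\lambda)a(\bar\lambda))^*=b^*(\bar\lambda)-a^*(\bar\lambda)m(\lambda)$, the right-hand side of (\ref{add_ineq}) evaluated on $h$ is exactly $(\Im m(\lambda)g,g)/\Im\lambda$.

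For $\lambda\in D$ the estimate (\ref{vlambda}) then follows directly: the operator $v(\bar\lambda)v^*(\bar\lambda)=(v(\lambda)v^*(\lambda))^{-1}$ is $\ge I_n$ because $v(\lambda)$ is a contraction, so the square root in (\ref{e_lm11}) is meaningful and the corresponding squared-norm term is non-negative; dividing (\ref{e_lm11}) by $\Im\lambda>0$ yields
\[
\bigl(\tfrac{\Im m_{a,b}(\lambda)}{\Im\lambda}h,h\bigr)\ge\tfrac{(\Im m(\lambda)g,g)}{\Im\lambda},
\]
which, chained with (\ref{add_ineq}), gives (\ref{vlambda}). For $\lambda\in D_1$ I would pass to $\mu=\bar\lambda\in D$ via the adjoint symmetry $m_{a,b}(\lambda)=m_{a,b}^*(\mu)$, which forces $\Im m_{a,b}(\lambda)/\Im\lambda=\Im m_{a,b}(\mu)/\Im\mu$; a parallel computation (in which $v(\bar\lambda)v^*(\bar\lambda)-I_n$ is now non-positive but $\Im\lambda<0$) yields the same inequality. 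Finally, the strict ``$\gg$'' in (\ref{mab}) follows by combining (\ref{vlambda}) with (\ref{star8}) and (\ref{prop_add1}): (\ref{prop_add1}) writes $V_\lambda(t)$ as $X_\lambda(t)$ post-composed with the left-invertible matrix factor (left inverse $(I_n,0)$) and a bijection, so the strict positivity $\Delta_\lambda(0,\beta)\gg 0$ provided by (\ref{star8}) transfers to $\int_0^\beta V_\lambda^*W_\lambda V_\lambda\,dt\gg 0$, and (\ref{vlambda}) then forces $\Im m_{a,b}(\lambda)/\Im\lambda\gg 0$.

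The main obstacle is the sign bookkeeping in the $\lambda\in D_1$ case. Lemma \ref{lm11} is stated only for $\lambda\in D$; under the extension (\ref{barlambda}) the sign of $v(\bar\lambda)v^*(\bar\lambda)-I_n$ reverses, so one must either re-derive the identity for $\lambda\in D_1$ (the squared-norm term acquiring opposite sign, corresponding to an imaginary square root) or work entirely through the adjoint symmetry $m_{a,b}(\lambda)=m_{a,b}^*(\bar\lambda)$. The delicate point is that three sign flips---of $\Im\lambda$, of $\Im m(\lambda)$, and of $v(\bar\lambda)v^*(\bar\lambda)-I_n$---must cooperate to leave the inequality in its original direction.
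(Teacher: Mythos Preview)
Your proposal is correct and follows essentially the same route as the paper: the paper's one-line proof cites exactly the ingredients you use—inequality (\ref{add_ineq}), Lemma \ref{lm11}, the adjoint symmetry $m_{a,b}(\bar\lambda)=m_{a,b}^*(\lambda)$, condition (\ref{star8}), and formula (\ref{prop_add1})—and your text is a faithful unpacking of how they combine, including the correct sign bookkeeping for the $D_1$ case (the identity (\ref{e_lm11}) is purely algebraic and persists for $\lambda\in D_1$ with the squared-norm term reinterpreted as the quadratic form of the now non-positive operator $v(\bar\lambda)v^*(\bar\lambda)-I_n$, whose sign is flipped back upon division by $\Im\lambda<0$). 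Your derivation of the strict positivity $\Im m_{a,b}(\lambda)/\Im\lambda\gg 0$ from (\ref{star8}) via (\ref{prop_add1}) and the left-invertibility of $\bigl(\begin{smallmatrix}I_n\\ m(\lambda)\end{smallmatrix}\bigr)$ is also exactly the intended argument.
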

 
\begin{lemma}\label{lm12} Let $v\left(\lambda \right)\in B\left(\mathcal{ H}^{n} \right)$ be a contraction analytically depending on $\lambda \in {\mathbb C}_{+} $. Let limit points of the set $S=\left\{\lambda \in {\mathbb C}_{+} :\, v^{-1} \left(\lambda \right)\notin B\left(\mathcal{H}^{n} \right)\right\}$ that belong to ${\mathbb C}_{+} $ be isolated. Let ${D}={\mathbb C}_{+} \backslash S$. Let us consider operator-function $m_{ab} \left(\lambda \right)$ (\ref{GEQ__68ad1_}), (\ref{alambda}), (\ref{barlambda}) as $\lambda \in {D}\bigcup { D}_{1} $. Then the points of the set $S$ are the removable singular points of this function. If $m_{a,b} \left(\lambda \right)$ is extended on the set $S$ in a proper way then we obtain the Nevanlinna operator-function $m_{a.b}(\lambda)=m_{a,b}^*(\bar\lambda)$.
\end{lemma}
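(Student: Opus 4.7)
The plan is to show that $m_{a,b}$ inherits on $D$ all the structure of an operator-valued Nevanlinna function, and then invoke the general principle that such functions cannot have interior singularities in $\mathbb{C}_+$. Concretely, I will reduce to scalar Nevanlinna theory via the diagonal forms $(m_{a,b}(\lambda)h,h)$, apply Riemann's removable singularity theorem together with the maximum modulus principle through a Cayley transform, and then lift back to a norm-analytic operator-valued extension via two applications of the uniform boundedness principle.

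First step: by Theorem \ref{th12} combined with condition \eqref{star8}, on $D$ one has the strict Nevanlinna inequality $\Im m_{a,b}(\lambda)/\Im\lambda\gg 0$. Fix $\lambda_0\in S$ that is isolated in $S\cap\mathbb{C}_+$ (the general case reduces to this, since the hypothesis forces the limit points of $S$ in $\mathbb{C}_+$ to be themselves isolated, so the extension is iterated). For any non-zero $h\in\mathcal{H}^n$, $\varphi_h(\lambda):=(m_{a,b}(\lambda)h,h)$ is a scalar function, analytic on $D$ with strict $\Im\varphi_h>0$ on $D\cap\mathbb{C}_+$, so its Cayley transform $g_h(\lambda):=(\varphi_h(\lambda)-i)(\varphi_h(\lambda)+i)^{-1}$ satisfies $|g_h|<1$ strictly there. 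Riemann's removable singularity theorem applied to the bounded scalar $g_h$ produces an analytic extension $\tilde g_h$ across $\lambda_0$ with $|\tilde g_h|\le 1$; the maximum modulus principle, combined with $|g_h|<1$ strictly on $D$, forces $|\tilde g_h(\lambda_0)|<1$, so in particular $\tilde g_h(\lambda_0)\ne 1$, and inverting the Cayley transform yields an analytic scalar extension $\tilde\varphi_h$ of $\varphi_h$ through $\lambda_0$. By polarization the same conclusion holds for every sesquilinear form $(m_{a,b}(\lambda)h,k)$.

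Second step: two applications of the uniform boundedness principle upgrade this pointwise analytic extension to a uniform norm bound. Fixing $h$, pointwise boundedness of the functionals $k\mapsto(m_{a,b}(\lambda)h,k)$ gives $\sup_\lambda\|m_{a,b}(\lambda)h\|<\infty$ on some punctured neighbourhood of $\lambda_0$; a second application gives $\sup_\lambda\|m_{a,b}(\lambda)\|<\infty$ there. A weakly analytic, locally bounded, operator-valued function is norm-analytic (Dunford), and a norm-analytic operator-valued function bounded on a punctured disk extends analytically across the puncture (operator-valued Riemann). Iterating over all points of $S$, $m_{a,b}$ extends norm-analytically to all of $\mathbb{C}_+$; gluing by $m_{a,b}(\bar\lambda):=m_{a,b}^*(\lambda)$ produces an extension to $\mathbb{C}\setminus\mathbb{R}^1$ consistent with the pre-existing values on $D_1$. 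The Nevanlinna inequality $\Im m_{a,b}(\lambda)/\Im\lambda\ge 0$ and the symmetry $m_{a,b}(\bar\lambda)=m_{a,b}^*(\lambda)$ are then inherited by analytic continuation and continuity.

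The main obstacle is the maximum modulus step. Without the \emph{strict} inequality $\gg$ furnished by Theorem \ref{th12} (which itself rests on \eqref{star8}), one could not exclude $\tilde g_h(\lambda_0)=1$, in which case $\tilde\varphi_h$ would genuinely blow up at $\lambda_0$ and $m_{a,b}$ would have a non-removable singularity there. All other steps are essentially routine operator-analytic manipulations, but the strictness of the Nevanlinna inequality is what actually makes the extension possible.
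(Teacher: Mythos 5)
Your proof is correct and follows essentially the same route as the paper's: both reduce to the scalar forms $(m_{a,b}(\lambda)h,h)$, use the Nevanlinna positivity coming from the identity \eqref{e_lm11} of Lemma \ref{lm11} (equivalently Theorem \ref{th12}) to see that the isolated points of $S$ are removable singularities of these scalar functions, and then pass to a norm-analytic operator-valued extension via the uniform boundedness principle and the weak-to-norm analyticity theorem; you merely make the scalar removability step explicit with a Cayley transform and the maximum modulus principle. One small correction to your closing remark: the strictness $\gg$ is not what makes the extension possible --- even with only $\Im\varphi_h\ge 0$, the case $|\tilde g_h(\lambda_0)|=1$ would force $\tilde g_h$ to be a unimodular constant, which is either $1$ (impossible, since $\varphi_h$ is finite on $D$) or corresponds to a real constant $\varphi_h$, so the singularity is removable in any case.
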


\begin{proof} 
Let $\lambda _{0} \in S$ be not the limit point of $S$. Then $\lambda _{0} $ is a removable singular point of scalar function $\left(m_{a,b} \left(\lambda \right)f,f\right)\, \forall f\in \mathcal{H}^{n} $ in view of (\ref{e_lm11}). Hence $\exists\, m_{0}\in B\left(\mathcal{H}^{n} \right):\, \mathop{\lim }\limits_{\lambda \to \lambda _{0} } \left(m_{a,b} \left(\lambda \right)f,f\right)=\left(m_{0} f,f\right)\forall f\in \mathcal{H}^{n} $ in view of principle of uniform boundedness \cite[p. 164]{Halmos}, \cite[p. 322]{Kato}. If we define $m_{a,b} \left(\lambda \right)$ in point $\lambda _{0} $ as $m_{a,b} \left(\lambda _{0} \right)=m_{0} $ then we obtain the operator-function which is analytic in point $\lambda _{0} $ in view of \cite[p. 195]{Kato}. The analicity of $m_{a,b} \left(\lambda \right)$ in limit points of ${S}$ belonging to $\mathbb{C}_+$ is proved analogously.
\end{proof} 

\begin{corollary}\label{cor11}
Let the construction $v(\lambda)\in B(\mathcal{H}^n)$ satisfy condition of Lemma \ref{lm12}. Then corresponding solution $V_\lambda(t)$ (\ref{GEQ__66ad1_})-(\ref{GEQ__68ad1_}), (\ref{alambda}), (\ref{barlambda}) satisfies inequality (\ref{vlambda}) ($V_\lambda(t)\overset{def}{=} (\ref{prop_add1}),\ \lambda\notin D\cup D_1$).
\end{corollary}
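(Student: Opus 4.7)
The plan is a continuity/limit-passage argument. For $\lambda\in D\cup D_1$ the inequality \eqref{vlambda} is already furnished by Theorem \ref{th12}, since the canonical pair \eqref{alambda}, \eqref{barlambda} fits its hypotheses. What remains is to extend \eqref{vlambda} to the exceptional set $S\cup\bar S$, where the symbol $V_\lambda(t)$ is defined by \eqref{prop_add1} and must be interpreted as an analytic continuation.

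First I would verify that $V_\lambda(t)$ extends continuously, and indeed analytically in $\lambda$ for each fixed $t$, through every $\lambda_0\in S$. To this end I would prefer the representation \eqref{GEQ__66ad1_} to \eqref{prop_add1}. For the canonical pair one has $u(\lambda)=v(\lambda)$, $S(\lambda)=I_n$ on $D$, and $S(\bar\lambda)=-(v^*(\lambda))^{-1}$ on $D_1$, so \eqref{prop_add2+} yields
\[
K(\lambda) = -4\,S^*(\bar\lambda)\,v(\lambda)\,S(\lambda) = 4\,v^{-1}(\lambda)v(\lambda) = 4\,I_n \text{ on } D,
\]
and hence $K^{-1}(\lambda)=\tfrac14 I_n$ extends trivially to all of $\mathbb{C}_+$. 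Since $X_\lambda(t)$, $a(\lambda)$, $b(\lambda)$ and $U_\lambda(t)$ are analytic in $\lambda$ on $\mathbb{C}_+$, and $m_{a,b}(\lambda)$ extends analytically through every $\lambda_0\in S$ by Lemma \ref{lm12}, each summand of \eqref{GEQ__66ad1_} is analytic across $\lambda_0$. Because \eqref{GEQ__66ad1_} and \eqref{prop_add1} agree on $D$ (established in the proof of Proposition \ref{rm21}), uniqueness of analytic continuation identifies this extension with the natural value of \eqref{prop_add1} at $\lambda_0$.

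Once continuity of $\lambda\mapsto V_\lambda(t)$ at $\lambda_0\in S$ is in hand, the limit step is routine. Fix $\lambda_0\in S$, an arbitrary vector $h\in\mathcal{H}^n$, a compact $[0,\beta]\subseteq\bar{\mathcal{I}}$, and a sequence $\lambda_k\in D$ with $\lambda_k\to\lambda_0$. Theorem \ref{th12} applied at each $\lambda_k$ gives
\[
\int_0^\beta \bigl\|\sqrt{W_{\lambda_k}(t)}\,V_{\lambda_k}(t)h\bigr\|^2\,dt \;\le\; \frac{(\Im m_{a,b}(\lambda_k)h,h)}{\Im\lambda_k}.
\]
By Lemma \ref{lm12} and $\Im\lambda_0\ne 0$, the right-hand side converges to $(\Im m_{a,b}(\lambda_0)h,h)/\Im\lambda_0$. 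The integrand on the left converges pointwise in $t$ to $\|\sqrt{W_{\lambda_0}(t)}\,V_{\lambda_0}(t)h\|^2$ thanks to continuity of $W_\lambda(t)=\Im H_\lambda(t)/\Im\lambda$ and of the extended $V_\lambda(t)$ in $\lambda$. Fatou's lemma delivers the scalar form of \eqref{vlambda} at $\lambda_0$, and arbitrariness of $h$ promotes this to the operator inequality. The case $\bar\lambda_0\in\bar S$ follows by taking adjoints and invoking $m_{a,b}(\bar\lambda)=m_{a,b}^*(\lambda)$.

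I expect the main obstacle to be not the limit passage itself, which Fatou dispatches cleanly thanks to the uniform scalar majorant, but the preparatory step: confirming that the two representations \eqref{GEQ__66ad1_} and \eqref{prop_add1}, which coincide on $D$, produce a single analytic object on $D\cup S$. The identification $K(\lambda)\equiv 4I_n$ on $D$ for the canonical pair, combined with the removable-singularity statement of Lemma \ref{lm12}, is precisely what makes this analytic continuation meaningful.
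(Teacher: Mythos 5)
Your proposal is correct and is essentially the argument the paper intends: the corollary is stated without proof precisely because it is meant to follow from Theorem \ref{th12} (which gives \eqref{vlambda} on $D\cup D_1$) together with Lemma \ref{lm12} (which removes the singularities of $m_{a,b}$ on $S$) by exactly the continuity/limit passage you describe, and your computation $K(\lambda)=4I_n$ is what makes the analytic continuation of \eqref{GEQ__66ad1_} (and its identification with \eqref{prop_add1}) transparent. The only cosmetic slip is the last step: since both sides of \eqref{vlambda} are self-adjoint, ``taking adjoints'' of the inequality at $\lambda_0$ just returns the same inequality, so the points of the reflected set should instead be handled by running the identical limit argument with $\lambda_k\in D_1$ approaching $\bar\lambda_0$ and using $m_{a,b}(\bar\lambda)=m_{a,b}^*(\lambda)$ there.
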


For the construction of solutions of Weyl type and descriptions of Weyl function in various situation see  \cite{0,Khrab5} and references in \cite{0}.

}

We consider in the separable Hilbert space $\mathcal{H}$
differential expression $l_\lambda\left[y\right]$ of order $r>0$ with coefficients from
$B\left(\mathcal{H}\right)$. This
expression is presented in the divergent form, namely
\begin{equation} \label{GEQ__51+_}
l_\lambda\left[y\right]=\sum\limits _{k=0}^{r}i^{k} l_{k}(\lambda)\left[y\right] ,
\end{equation}
where $l_{2j}(\lambda)=D^{j} p_{j} \left(t,\lambda\right)D^{j} $, $l_{2j-1}(\lambda)
=\frac{1}{2} D^{j-1} \left\{Dq_{j} \left(t,\lambda\right)+s_{j}
\left(t,\lambda\right)D\right\}D^{j-1}$, $D={d\over dt}$. 

Let $-l_{\lambda } $ depend on $\lambda $ in Nevanlinna
manner. Namely, from now on the following condition holds:

(\textbf{B}) The set $\mathcal{B}\supseteq {\mathbb{C}\setminus
\mathbb{R}}^1 $ exists, any its points have a neighbourhood
independent on $t\in \bar{\mathcal{I}}$, in this neighbourhood
coefficients $p_{j} =p_{j} \left(t,\lambda \right),\, \, q_{j}
=q_{j} \left(t,\lambda \right),\, \, s=s_{j} \left(t,\lambda
\right)$ of the expression $l_{\lambda } $ are analytic $\forall
t\in \bar{\mathcal{I}}$; $\forall \lambda \in \mathcal{B}{\rm ,}\,
\, p_{j} \left(t,\lambda \right)$, $q_{j} \left(t,\lambda
\right)$, $s_{j} \left(t,\lambda \right)\in C^{j}
\left(\bar{\mathcal{I}},B\left(\mathcal{H}\right)\right)$ and
\begin{equation} \label{GEQ__48_}
p_{n}^{-1} \left(t,\lambda \right)\in
B\left(\mathcal{H}\right)\left(r=2n\right),\, \left(q_{n+1}
\left(t,\lambda \right)+s_{n+1} \left(t,\lambda
\right)\right)^{-1} \in B\left(\mathcal{H}\right)\,
\left(r=2n+1\right),\ t\in\bar{\mathcal{I}};
\end{equation}
these coefficients satisfy the following conditions
\begin{gather} \label{GEQ__49_}
p_{j} \left(t,\lambda \right)=p_{j}^{*} \left(t,\bar{\lambda
}\right),\, q_{j} \left(t,\lambda \right)=s_{j}^{*}
\left(t,\bar{\lambda }\right),\ \lambda \in\mathcal{B}\ (\Longleftrightarrow\ l_{\lambda }
=l_{\bar{\lambda }}^{*},\ \lambda\in\mathcal{B}),
\end{gather}
\begin{multline} \label{GEQ__50_}
\forall h_{0} ,\ldots ,h_{\left[\frac{r+1}{2} \right]} \in
\mathcal{H}:\\ \frac{\Im \left(\sum\limits
_{j=0}^{\left[r/2\right]}\left(p_{j} \left(t,\lambda \right)h_{j}
,h_{j} \right) +\frac{i}{2} \sum\limits
_{j=1}^{\left[\frac{r+1}{2} \right]}\left\{\left(s_{j}
\left(t,\lambda \right)h_{j} ,h_{j-1} \right)-\left(q_{j}
\left(t,\lambda \right)h_{j-1} ,h_{j} \right)\right\} \right)}{\Im
\lambda } \le 0,\\ t\in \bar{\mathcal{I}},\, \, \, \Im \lambda \ne
0.
\end{multline}

Therefore the order of expression $\Im l_{\lambda } $ is even and
therefore if $r=2n+1$ is odd, then $q_{m+1} ,\, s_{m+1} $ are
independent on $\lambda $ and $s_{n+1} =q_{n+1}^{*} $.

Condition \eqref{GEQ__50_} is equivalent to the condition:
${\left(\Im l_{\lambda } \right)\left\{f,f\right\}
\mathord{\left/{\vphantom{\left(\Im l_{\lambda }
\right)\left\{f,f\right\} \Im \lambda
}}\right.\kern-\nulldelimiterspace} \Im \lambda } \le 0,\, \, t\in
\bar{\mathcal{I}},\, \, \Im \lambda \ne 0$. Here for differential expression 
$L[y]=\sum\limits _{k=0}^{R}i^{k} L_{k}[y]
$ with sufficiently smooth coefficients from $B(\mathcal{H})$, where $L_{2j} =D^{j} P_{j} \left(t\right)D^{j}$,\ $L_{2j-1}
=\frac{1}{2} D^{j-1} \left\{DQ_{j} \left(t\right)+S_{j}
\left(t\right)\, D\right\}D^{j-1} $, we denote by 
\begin{multline} \label{GEQ__28_}
L\left\{f,g\right\}=\sum\limits _{j=0}^{\left[{R
\mathord{\left/{\vphantom{R 2}}\right.\kern-\nulldelimiterspace}
2} \right]}\left(P_{j} \left(t\right)f^{\left(j\right)}
\left(t\right),\, g^{\left(j\right)} \left(t\right)\right)+\\+
\frac{i}{2} \sum\limits _{j=1}^{\left[\frac{R+1}{2}
\right]}\left(S_{j} \left(t\right)f^{\left(j\right)}
\left(t\right),\, g^{\left(j-1\right)}
\left(t\right)\right)-\left(Q_{j}
\left(t\right)f^{\left(j-1\right)}
\left(t\right),g^{\left(j\right)} \left(t\right)\right)
\end{multline}
the bilinear form which corresponds to subintegral expression of the Dirichlet integral for 
expression $L[y]$.

Let $m[y]$ be the same as $l_\lambda[y]$ differential expression
of even order $s\le r$ with operator coefficients $\tilde{p}_{j}
\left(t\right)=\tilde{p}_{j}^{*} \left(t\right),\, \,
\tilde{q}_{j} \left(t\right),\, \tilde{s}_{j}
\left(t\right)=\tilde{q}_{j}^{*} \left(t\right)\in C^j(\bar{\mathcal{I}},B(\mathcal{H}))$ that are
independent on $\lambda $. Let
\begin{multline}
\label{GEQ__52_} \forall h_{0} ,\, \ldots ,\,
h_{\left[\frac{r+1}{2} \right]} \in \mathcal{H}:\, \, 0\le
\sum\limits _{j=0}^{{s \mathord{\left/{\vphantom{s
2}}\right.\kern-\nulldelimiterspace} 2} }\left(\tilde{p}_{j}
\left(t\right)h_{j} ,h_{j} \right) +{\Im}\sum\limits _{j=1}^{{s
\mathord{\left/{\vphantom{s 2}}\right.\kern-\nulldelimiterspace}
2} }\left(\tilde{q}_{j}
\left(t\right)h_{j-1} ,\, h_{j} \right) \le\\
 \le -\frac{\Im \left(\sum\limits
_{j=0}^{\left[{r \mathord{\left/{\vphantom{r
2}}\right.\kern-\nulldelimiterspace} 2} \right]}\left(p_{j}
\left(t,\lambda \right)h_{j} ,h_{j} \right) +\frac{i}{2}
\sum\limits _{j=1}^{\left[\frac{r+1}{2} \right]}\left(\left(s_{j}
\left(t,\lambda \right)h_{j} ,h_{j-1} \right)-\left(q_{j}
\left(t,\lambda \right)h_{j-1} ,h_{j} \right)\right) \right)}{\Im
\lambda },\\ t\in \bar{\mathcal{I}},\, \, \, \Im \lambda \ne 0.
\end{multline}
Condition \eqref{GEQ__52_} is equivalent to the condition:
$0\le m\left\{f,f\right\}\le -({\Im l_{\lambda })\left\{f,f\right\}/\Im \lambda } $, $t\in
\bar{\mathcal{I}}$, $\Im \lambda \ne 0$.

In the case of even $r=2n\ge s$  we denote
\begin{gather}\label{GEQ__6_}
Q\left(t,l_\lambda\right)={J}/{i} ,\, \, \,
S\left(t,l_\lambda\right)=Q\left(t,l_\lambda\right), \\ \label{GEQ__7_}
H\left(t,\, l_\lambda\right)=\left\| h_{\alpha \beta } \right\| _{\alpha
,\, \beta =1}^{2} ,\, \, h_{\alpha \beta } \in
B\left(\mathcal{H}^n \right),
\end{gather}
where $h_{11} $ is a three diagonal operator matrix whose
elements under the main diagonal are equal to $\left(\frac{i}{2}
q_{1} ,\, \ldots ,\, \frac{i}{2} q_{n-1} \right)$, the elements
over the main diagonal are equal to $\left(-\frac{i}{2} s_{1} ,\,
\, \ldots ,\, \, -\frac{i}{2} s_{n-1} \right)$, the elements on
the main diagonal are equal to $\left(-p_{0} ,\, \, \ldots ,\, \,
-p_{n-2} ,\, \, \frac{1}{4} s_{n} p_{n}^{-1} q_{n} -p_{n-1}
\right)$; $h_{12} $ is an operator matrix with the identity
operators $I_{1} $ under the main diagonal, the elements on the
main diagonal are equal to $\left(0,\, \, \ldots ,\, \, 0,\, \,
-\frac{i}{2} s_{n} p_{n}^{-1} \right)$, the rest elements are
equal to zero; $h_{21} $ is an operator matrix with identity
operators $I_{1} $ over the main diagonal, the elements on the
main diagonal are equal to $\left(0,\, \, \ldots ,\, \, 0,\, \,
\frac{i}{2} p_{n}^{-1} q_{n} \right)$, the rest elements are equal
to zero; $h_{22} =\mathrm{diag}\left(0,\, \, \ldots ,\, \, 0,\, \,
p_{n}^{-1} \right)$.

Also in this case we denote
\footnote{\label{foot1}$W\left(t,l_\lambda,m\right)$ is given for the case
$s=2n$ . If  $s<2n$ one have set the corresponding elements of
operator matrices $m_{\alpha \beta } $ be equal to zero. In
particular if  $s<2n$ then  $m_{12} =m_{21} =m_{22} =0$  and
therefore $W\left(t,l_\lambda,m\right)=\mathrm{diag}\left(m_{11}
,0\right)$ in view of (14) from \cite{KhrabArxiv,KhrabMAG}.}
\begin{equation} \label{GEQ__8_}
W\left(t,\, l_\lambda,\, m\right)=C^{*-1} \left(t,l_\lambda\right)\left\{\left\|
m_{\alpha \beta } \right\| _{\alpha ,\, \beta =1}^{2}
\right\}C^{-1} \left(t,l_\lambda\right), m_{\alpha \beta } \in
B\left(\mathcal{H}^{n} \right),
\end{equation}
where $m_{11} $ is a tree diagonal operator matrix whose elements
under the main diagonal are equal to $\left(-\frac{i}{2}
\tilde{q}_{1} ,\, \ldots ,\, -\frac{i}{2} \tilde{q}_{n-1}
\right)$, the elements over the main diagonal are equal to
$\left(\frac{i}{2} \tilde{s}_{1} ,\, \ldots ,\, \frac{i}{2}
\tilde{s}_{n-1} \right)$, the elements on the main diagonal are
equal to $\left(\tilde{p}_{0} ,\, \ldots ,\, \tilde{p}_{n-1}
\right)$; $m_{12} =\mathrm{diag}\left(0,\, \ldots ,\, 0,\,
\frac{i}{2} \tilde{s}_{n} \right)$, $m_{21}
=\mathrm{diag}\left(0,\, \ldots ,\, 0,\, -\frac{i}{2}
\tilde{q}_{n} \right)$, $m_{22} =\mathrm{diag}\left(0,\, \ldots
,\, 0,\, \tilde{p}_{n} \right)$.

The operator matrix $C\left(t,l_\lambda\right)$ is defined by the condition
\begin{multline}
\label{GEQ__9_} C\left(t,l_\lambda\right)col\left\{f\left(t\right),\,
f'\left(t\right),\, \ldots ,\, f^{\left(n-1\right)}
\left(t\right),\, f^{\left(2n-1\right)} \left(t\right),\, \ldots
,\, f^{\left(n\right)} \left(t\right)\right\}=\\=
 col\, \left\{f^{\left[0\right]}
\left(t|l_\lambda\right),\, f^{\left[1\right]} \left(t|l_\lambda\right),\, \ldots
,\, f^{\left[n-1\right]} \left(t|l_\lambda\right),\, f^{\left[2n-1\right]}
\left(t\left|l_\lambda\right. \right),\, \ldots ,\, f^{\left[n\right]}
\left(t\left|l_\lambda\right. \right)\right\},
\end{multline}
where $f^{\left[k\right]} \left(t\left|L\right. \right)$ are
quasi-derivatives of vector-function $f\left(t\right)$ that
correspond to differential expression $L[y]$; $C^{-1}(t,l_\lambda)\in B(\mathcal{H}^r)$, $t\in\bar{\mathcal{I}}$, $\lambda\in\mathcal{B}$ in view of (14) from \cite{KhrabArxiv,KhrabMAG}.

The quasi-derivatives corresponding to $l_\lambda[y]$ are equal (cf.
\cite{RBUpsala}) to
\begin{gather} \label{GEQ__10_}
y^{\left[j\right]} \left(t\left|l_\lambda\right. \right)=y^{\left({
j}\right)} \left(t\right),\, \, \, { j}=0,\, \, ...,\, \,
\left[\frac{r}{2} \right]-1, \\ \label{GEQ__11_}
y^{\left[n\right]} \left(t\left|l_\lambda\right.
\right)=\left\{\begin{array}{l} {p_{n} y^{\left(n\right)}
-\frac{i}{2} q_{n} y^{\left(n-1\right)} ,\, \, r=2n} \\
{-\frac{i}{2} q_{n+1} y^{\left(n\right)} ,\, \, r=2n+1}
\end{array}\right.,
\\\label{GEQ__12_}
y^{\left[r-j\right]} \left(t\left|l_\lambda\right.
\right)=-Dy^{\left[r-j-1\right]} \left(t\left|l_\lambda\right.
\right)+p_{j} y^{\left(j\right)} +\frac{i}{2} \left[s_{j+1}
y^{\left(j+1\right)} -q_{j} y^{\left(j-1\right)} \right],\, j=0,\,
...,\, \left[\frac{r-1}{2} \right],\, q_{0} \equiv 0.
\end{gather}
At that $l_\lambda\left[y\right]=y^{\left[r\right]} \left(t\left|l_\lambda\right.
\right)$. The quasi-derivatires $y^{\left[k\right]}
\left(t\left|m\right. \right)$ corresponding to $m[y]$ are defined in
the same way with even $s$ instead of $r$ and $\tilde{p}_{j},
\tilde{q}_{j} ,\tilde{s}_{j} $ instead of $p_{j} ,q_{j} ,s_{j} $.

In the case of odd $r=2n+1>s$ we denote
\begin{gather} \label{GEQ__14_}
Q\left(t,l_\lambda\right)=\begin{cases}J/i\oplus q_{n+1}\\
q_{1}\end{cases},\quad S\left(t,l_\lambda\right)=\begin{cases} J/i\oplus
s_{n+1},& n>0 \\ s_{1},& n=0\end{cases},
\\
\label{GEQ__15_} H\left(t,\, l_\lambda\right)=\begin{cases}\left\|
h_{\alpha \,\beta } \right\|_{\alpha ,\, \beta =1}^{2},& n>0 \\
p_{0},& n=0
\end{cases},
\end{gather}
where $B\left(\mathcal{H}^{n} \right)\ni h_{11} $ is a
three-diagonal operator matrix whose elements under the main
diagonal are equal to $\left(\frac{i}{2} q_{1} ,\, \ldots ,\,
\frac{i}{2} q_{n-1} \right)$, the elements over the main diagonal
are equal to $\left(-\frac{i}{2} s_{1} ,\, \ldots ,\, -\frac{i}{2}
s_{n-1} \right)$, the elements on the main diagonal are equal to
$\left(-p_{0} ,\, \ldots ,\, -p_{n-1} \right)$, the rest elements
are equal to zero. $B\, \left(\mathcal{ H}^{n+1} ,\, \mathcal{
H}^{n} \right)\ni h_{12} $ is an operator matrix whose elements
with numbers $j,\, j-1$ are equal to $I_{1} ,\, j=2,\, \ldots ,\,
n$, the element with number $n,\, n+1$ is equal to $\frac{1}{2}
s_{n} $, the rest elements are equal to zero. $B\,
\left(\mathcal{H}^{n} ,\, \mathcal{H}^{n+1} \right)\ni h_{21} $ is
an operator matrix whose elements with numbers $j-1,\, j$ are
equal to $I_{1} ,\, j=2,\, \ldots ,\, n$, the element with number
$n+1,\, n$ is equal to $\frac{1}{2} q_{n} $, the rest elements are
equal to zero. $B\, \left(\mathcal{H}^{n+1} \right)\ni h_{22} $ is
an operator matrix whose last row is equal to $\left(0,\, \ldots
,\, 0,\, -iI_{1} ,\, -p_{n} \right)$, last column is equal to
$col\, \left(0,\, \ldots ,\, 0,\, iI_{1} ,\, -p_{n} \right)$, the
rest elements are equal to zero.

Also in this case we denote \footnote{See the previous footnote}
\begin{equation} \label{GEQ__16_}
W\left(t,\, l_\lambda,\, m\right)=\left\| m_{\alpha \beta } \right\| _{\alpha ,\, \beta =1}^{2} ,
\end{equation}
where $m_{11} $ is defined in the same way as $m_{11} $
\eqref{GEQ__8_}. $B\left(\mathcal{H}^{n+1} ,\, \mathcal{H}^{n}
\right)\ni m_{12} $ is an operator matrix whose element with
number $n,\, n+1$ is equal to $-\frac{1}{2} \tilde{s}_{n} $, the
rest elements are equal to zero. $B\, \left(\mathcal{H}^{n} ,\,
\mathcal{H}^{n+1} \right)\ni m_{21} $ is an operator matrix whose
element with number $n+1,\, n$ is equal to $-\frac{1}{2}
\tilde{q}_{n} $, the rest elements are equal to zero. $B\,
\left(\mathcal{H}^{n+1} \right)\ni m_{22} =\mathrm{diag}\left(0,\,
\ldots ,\, 0,\, \tilde{p}_{n} \right)$.

Obviously in view of \eqref{GEQ__49_}, \eqref{GEQ__52_} for $H\left(t,l_\lambda\right)$ \eqref{GEQ__7_},
\eqref{GEQ__15_} and $W\left(t,l_\lambda,m\right)$ 
\eqref{GEQ__8_}, \eqref{GEQ__16_} one has 
\begin{equation} \label{GEQ__17_}
H^{*} \left(t,l_\lambda\right)=H\left(t,l_{\bar\lambda} \right), W^{*} \left(t,l_\lambda,\, m\right)=W\left(t,l_\lambda,\, m \right),\ t\in\bar{\mathcal{I}},\lambda\in\mathcal{B}.
\end{equation}

\begin{lemma}\cite{KhrabMAG} (see also \cite{KhrabArxiv})\label{lm1}
Let the order of $\Im l_\lambda$ is even. Then
\begin{equation} \label{GEQ__18_}
\Im H\left(t,l_\lambda\right)=W\left(t,l_\lambda,\, -\Im l_\lambda\right)=W\left(t,l_{\bar\lambda}
,\, -\Im l_\lambda\right),\ t\in\bar{\mathcal{I}},\lambda\in\mathcal{B}.
\end{equation}
\end{lemma}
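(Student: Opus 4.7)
The plan is to verify the identity by direct computation, treating the even-order case $r=2n$ and the odd-order case $r=2n+1$ separately, since the block operator matrices $H(t,l_\lambda)$ (formulas \eqref{GEQ__7_}, \eqref{GEQ__15_}) and $W(t,l_\lambda,m)$ (formulas \eqref{GEQ__8_}, \eqref{GEQ__16_}) are defined differently in each. For the first equality, I would use \eqref{GEQ__17_} to write $\Im H(t,l_\lambda)=\frac{1}{2i}(H(t,l_\lambda)-H(t,l_{\bar\lambda}))$, so that taking $\Im$ reduces to taking the imaginary parts of the block entries of $H$, namely $\Im p_j$, $\Im q_j$, $\Im s_j$ in the off-diagonal and diagonal slots, and the imaginary part of the compound entry $\frac{1}{4}s_np_n^{-1}q_n-p_{n-1}$ in the lower-right slot of $h_{11}$. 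On the other side, the differential expression $-\Im l_\lambda$ has coefficients $-\Im p_j$, $-\Im q_j$, $-\Im s_j$ (up to the signs dictated by the $i^k$ factors in \eqref{GEQ__51+_}); these become the $\tilde p_j,\tilde q_j,\tilde s_j$ feeding the middle matrix $\|m_{\alpha\beta}\|$ in \eqref{GEQ__8_}, \eqref{GEQ__16_}.

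The nontrivial structural step is to absorb the outer conjugation by $C^{-1}(t,l_\lambda)$. The key observation is that $\|m_{\alpha\beta}^{-\Im l_\lambda}\|$ is sparse: in the even case, $m_{12},m_{21},m_{22}$ are supported only in the $(n,n)$ block entry, while $C(t,l_\lambda)$ differs from an index-permutation only through the quasi-derivatives $y^{[j]}(t|l_\lambda)$ of order $j\ge n$ defined in \eqref{GEQ__10_}--\eqref{GEQ__12_}. Computing $C^{*-1}\|m_{\alpha\beta}^{-\Im l_\lambda}\|C^{-1}$ in these coordinates, the conjugation reshapes the corner entries of $m_{22},m_{12},m_{21}$ into exactly the compound imaginary expression $\Im(\frac{1}{4}s_np_n^{-1}q_n-p_{n-1})$ and the imaginary parts of the $h_{12},h_{21}$ slots of $H$. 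The odd case is similar, with the extra row/column corresponding to $q_{n+1},s_{n+1}$ which, being independent of $\lambda$ by the remark after \eqref{GEQ__50_}, contribute no imaginary part and commute through the check.

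For the second equality $W(t,l_\lambda,-\Im l_\lambda)=W(t,l_{\bar\lambda},-\Im l_\lambda)$, the middle factor is identical in both expressions, so only the flanking $C^{-1}$ factors change. I would use \eqref{GEQ__49_} and the quasi-derivative formulas to show that $C(t,l_\lambda)-C(t,l_{\bar\lambda})$ lies entirely in entries whose action on $\|m_{\alpha\beta}^{-\Im l_\lambda}\|$ either vanishes (because of the sparse support) or is absorbed by the symmetry $m_{21}=m_{12}^*$, so the two conjugations agree. The main obstacle is purely combinatorial: tracking the block-matrix entries through the conjugation and verifying that the imaginary parts of the compound expressions in $H$ match the appropriately transformed coefficients of $-\Im l_\lambda$. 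I would organize the calculation by matching entries in order (diagonal $p_j$-slots first, then off-diagonal $q_j,s_j$-slots, finally the $(n,n)$ corner) to avoid getting lost in the indices.
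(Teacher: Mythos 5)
First, note that the paper itself contains no proof of Lemma \ref{lm1}: it is imported verbatim from \cite{KhrabMAG}, so there is no in-text argument to compare yours against. Your plan --- direct entrywise verification via \eqref{GEQ__17_} and the sparsity of the blocks $m_{\alpha\beta}$ --- is the natural (essentially the only) route, and your structural picture of where the conjugation by $C^{-1}(t,l_\lambda)$ acts is accurate: since $f^{[j]}(t|l_\lambda)=f^{(j)}$ for $j\le n-1$, only the quasi-derivative $f^{[n]}(t|l_\lambda)=p_nf^{(n)}-\frac{i}{2}q_nf^{(n-1)}$ couples to the nonzero corner entries of $m_{12},m_{21},m_{22}$. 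But the step you describe as the conjugation ``reshaping the corner entries into exactly the compound imaginary expression'' is the whole content of the lemma and is asserted rather than derived. What actually closes it is the telescoping rule $\frac{1}{2i}(ABC(\lambda)-ABC(\bar\lambda))=\delta A\,B(\lambda)C(\lambda)+A(\bar\lambda)\,\delta B\,C(\lambda)+A(\bar\lambda)B(\bar\lambda)\,\delta C$, where $\delta X:=\frac{1}{2i}(X(\lambda)-X(\bar\lambda))$ and $\delta(p_n^{-1})=-p_n^{-1}(\bar\lambda)\,\delta p_n\,p_n^{-1}(\lambda)$, applied to $\frac14 s_np_n^{-1}q_n$, $-\frac{i}{2}s_np_n^{-1}$, $p_n^{-1}$, combined with $p_n^*(\lambda)=p_n(\bar\lambda)$, $q_n^*(\lambda)=s_n(\bar\lambda)$ from \eqref{GEQ__49_}. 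Relatedly, the coefficients of $-\Im l_\lambda$ are $-\frac{1}{2i}(q_j(\lambda)-q_j(\bar\lambda))$, which is \emph{not} $-\Im q_j(\lambda)$, precisely because $q_j^*(\lambda)=s_j(\bar\lambda)\ne q_j(\bar\lambda)$ in general; your hedge ``up to signs'' does not cover this.

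The one genuinely wrong step is your mechanism for the second equality $W(t,l_\lambda,-\Im l_\lambda)=W(t,l_{\bar\lambda},-\Im l_\lambda)$. The matrices $C^{-1}(t,l_\lambda)$ and $C^{-1}(t,l_{\bar\lambda})$ differ exactly in the rows built from $f^{[n]}$, i.e.\ exactly where the middle matrix is supported, so their difference is neither annihilated by the sparse support nor absorbed by $m_{21}=m_{12}^*$; a check already at $r=s=2$ shows the entire bottom row of $C^{-1}$ changes and multiplies nonzero entries. The equality is nonetheless true and follows formally from the first one: $\Im l_{\bar\lambda}=-\Im l_\lambda$ and $H^*(t,l_\lambda)=H(t,l_{\bar\lambda})$ give $W(t,l_{\bar\lambda},-\Im l_\lambda)=-W(t,l_{\bar\lambda},-\Im l_{\bar\lambda})=-\Im H(t,l_{\bar\lambda})=\Im H(t,l_\lambda)$, using only that $W(t,l,\cdot)$ is linear in its third argument. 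Finally, in the odd case \eqref{GEQ__16_} carries no $C$-conjugation at all, so that case reduces to an immediate entrywise comparison (your observation that $q_{n+1},s_{n+1}$ are $\lambda$-independent is indeed the relevant point there).
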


For sufficiently smooth vector-function $f\left(t\right)$ by
corresponding capital letter we denote (if $f(t)$ has a subscript
then we add the same subscript to $F$)
\begin{multline} \label{GEQ__23_}
\mathcal{H}^{r} \ni F\left(t,\, l_\lambda,m\right)=\\=\begin{cases}
\left(\sum\limits _{j=0}^{s/2}\oplus f^{\left(j\right)}
\left(t\right) \right)\oplus
0 \oplus ...\oplus 0 ,\quad r=2n,&r=2n+1>1,s<2n\\
\left(\sum\limits _{j=0}^{n-1}\oplus f^{\left(j\right)}
\left(t\right) \right)\oplus 0 \oplus ...\oplus 0 \oplus \,
\left(-if^{\left(n\right)} \left(t\right)\right),&r=2n+1>1,
s=2n\\ f\left(t\right),&r=1 \\ \left(\sum\limits
_{j=0}^{n-1}\oplus f^{\left(j\right)} \left(t\right) \right)\oplus
\left(\sum\limits _{j=1}^{n}\oplus f^{\left[r-j\right]}
\left(t\left|l_\lambda\right. \right) \right),& r=s=2n.\end{cases}
\end{multline}

\begin{theorem}\cite{KhrabMAG} (see also \cite{KhrabArxiv})\label{th1}
Equation \eqref{GEQ__1_} is equivalent to the following first
order system
\begin{equation} \label{GEQ__54_}
\frac{i}{2} \left(\left(Q\left(t,l_{\lambda }
\right)\vec{y}\left(t\right)\right)^{{'} } +Q^{*}
\left(t,l_{\lambda }
\right)\vec{y}\hspace{2px}'\left(t\right)\right)-H\left(t,l_{\lambda
} \right)\vec{y}\left(t\right)=W\left(t,l_{\bar{\lambda }}
,m\right)F\left(t,l_{\bar{\lambda }} ,m\right),
\end{equation}
where $Q\left(t,l_{\lambda } \right),\, H\left(t,l_{\lambda }
\right)$ are defined by \eqref{GEQ__6_}, \eqref{GEQ__7_},
\eqref{GEQ__14_}, \eqref{GEQ__15_} and $W\left(t,l_{\bar{\lambda }} ,m\right)$,
$F\left(t,l_{\bar{\lambda }} ,m\right)$ are defined by
\eqref{GEQ__8_}, \eqref{GEQ__16_}, \eqref{GEQ__23_} with $l_{\bar\lambda } $ instead of
$l_\lambda$.
Namely if $y\left(t\right)$ is a solution of equation
\eqref{GEQ__1_} then
\begin{multline} \label{GEQ__25_}
\vec{y}\left(t\right)=\vec{y}\left(t,\, l_\lambda,\, m,\,
f\right)=\\=\begin{cases}\left(\sum\limits _{j=0}^{n-1}\oplus
y^{\left(j\right)} \left(t\right) \right)\oplus \left(\sum\limits
_{j=1}^{n}\oplus \left(y^{\left[r-j\right]} \left(t\left|l_\lambda\right.
\right)-f^{\left[s-j\right]}
\left(t\left|m\right. \right)\right) \right),& r=2n\\
\left(\sum\limits _{j=0}^{n-1}\oplus y^{\left(j\right)}
\left(t\right) \right)\oplus \left(\sum\limits _{j=1}^{n}\oplus
\left(y^{\left[r-j\right]} \left(t\left|l_\lambda\right.
\right)-f^{\left[s-j\right]} \left(t\left|m\right. \right)\right)
\right)\oplus \left(-iy^{\left(n\right)} \left(t\right)\right),&
r=2n+1>1 \\\quad \text{{\rm (here }}f^{\left[k\right]}
\left(t\left|m\right.
\right)\equiv 0\text{ \rm as }k< \frac{s}{2}\text{\rm)}\\
\\y\left(t\right),& r=1\end{cases}
\end{multline}
is a solution of \eqref{GEQ__54_}. Any solution of
equation \eqref{GEQ__54_}  is equal to \eqref{GEQ__25_}, where
$y\left(t\right)$ is some solution of equation
\eqref{GEQ__1_}.
\end{theorem}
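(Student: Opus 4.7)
The plan is to verify the equivalence by direct computation in each of the three cases distinguished in \eqref{GEQ__25_}, using the quasi-derivative recurrences \eqref{GEQ__10_}--\eqref{GEQ__12_} as the bridge between $l_\lambda$ and its first-order "companion" form.

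Consider first the principal case $r=2n$. Given a solution $y$ of \eqref{GEQ__1_}, form $\vec y$ by \eqref{GEQ__25_}: its first $n$ components are $y,y',\ldots,y^{(n-1)}$, and the remaining $n$ components are the shifted quasi-derivatives $y^{[r-j]}(t|l_\lambda)-f^{[s-j]}(t|m)$, $j=1,\ldots,n$. Since $Q(t,l_\lambda)=J/i$ is constant and Hermitian (so $Q^*=Q$), the left-hand side of \eqref{GEQ__54_} reduces to $J\vec y\,'-H(t,l_\lambda)\vec y$. I would then read off the resulting $r$ scalar equations row by row against the block form of $H$ in \eqref{GEQ__7_}. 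The first $n-1$ rows yield the trivial identities $(y^{(j)})'=y^{(j+1)}$ for $j\le n-2$; the $n$-th row encodes the definition \eqref{GEQ__11_} of $y^{[n]}$; the next $n-1$ rows reproduce \eqref{GEQ__12_} for the quasi-derivatives $y^{[r-j]}$; and the terminal row asserts $y^{[r]}=l_\lambda[y]$. The right-hand side $W(t,l_{\bar\lambda},m)F(t,l_{\bar\lambda},m)$ delivers the corresponding shifts: by \eqref{GEQ__9_}, $C(t,l_{\bar\lambda})$ transports ordinary derivatives into $l_{\bar\lambda}$-quasi-derivatives, so $C^{*-1}\{m_{\alpha\beta}\}C^{-1}$ applied to $F$ unpacks $m[f]$ into its quasi-derivative data $f^{[s-j]}(t|m)$ in the appropriate rows. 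Combining the two halves and invoking \eqref{GEQ__1_}, i.e.\ $y^{[r]}(t|l_\lambda)=m[f]=f^{[s]}(t|m)$, gives exactly \eqref{GEQ__54_}.

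The case $r=2n+1>1$ follows the same pattern with one extra block: the additional component $-iy^{(n)}$ in $\vec y$ and the extra row/column in $Q,H$ from \eqref{GEQ__14_}--\eqref{GEQ__15_} together enforce the odd-order version of \eqref{GEQ__11_} and close the recurrence at the top level. The degenerate case $r=1$ collapses the system to a single component equation which, after unfolding \eqref{GEQ__6_}--\eqref{GEQ__7_} with $n=0$, is literally \eqref{GEQ__1_} written in divergence form. Conversely, given a solution $\vec y$ of \eqref{GEQ__54_}, one reads the above row-by-row identification backward: invertibility of the structural blocks of $H(t,l_\lambda)$ guaranteed by \eqref{GEQ__48_} successively forces $\vec y_{j+1}=(\vec y_1)^{(j)}$ for $j\le n-1$, then identifies the lower half with the shifted quasi-derivatives, and finally yields \eqref{GEQ__1_} from the terminal equation.

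The main obstacle is the sub-case $r=s=2n$, where \eqref{GEQ__23_} contains mixed quasi-derivatives $f^{[r-j]}(t|l_{\bar\lambda})$ of $f$ and the off-diagonal entries $m_{12},m_{21}$ of \eqref{GEQ__8_} (involving $\tilde s_n,\tilde q_n$) contribute cross terms which, after conjugation by $C^{-1}(t,l_\lambda)$ and use of \eqref{GEQ__49_} in the form $C^{*-1}(t,l_\lambda)=(C^{-1}(t,l_{\bar\lambda}))^*$, must be shown to match exactly the contribution of $-f^{[s-j]}(t|m)$ appearing in \eqref{GEQ__25_}; this is precisely the reason why $l_{\bar\lambda}$ (rather than $l_\lambda$) appears on the right-hand side of \eqref{GEQ__54_}. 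The other sub-cases involve only ordinary derivatives of $f$ in \eqref{GEQ__23_} (with $W=\mathrm{diag}(m_{11},0)$ when $s<2n$, per the footnote) and reduce the verification to elementary block matrix algebra.
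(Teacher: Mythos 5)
The paper itself contains no proof of Theorem \ref{th1} (it is imported from \cite{KhrabMAG,KhrabArxiv}), and your direct row-by-row verification — substituting \eqref{GEQ__25_} into \eqref{GEQ__54_}, matching rows against the quasi-derivative recurrences \eqref{GEQ__10_}--\eqref{GEQ__12_} and the terminal identity $y^{[r]}(t|l_\lambda)=m[f]=f^{[s]}(t|m)$, unpacking $W(t,l_{\bar\lambda},m)F(t,l_{\bar\lambda},m)$ through $C^{-1}(t,l_{\bar\lambda})$, and inverting the identification for the converse — is exactly the intended argument, including your correct isolation of the sub-case $r=s=2n$ as the only delicate point. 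One bookkeeping caveat: since $iQ$ has the block form with $-I_n$ in the upper right and $I_n$ in the lower left, the first $n$ rows of $iQ\vec y\,'-H\vec y$ involve the derivatives of the \emph{lower} half of $\vec y$ and hence encode the recurrences \eqref{GEQ__12_} together with the equation itself, while the trivial identities $(y^{(j)})'=y^{(j+1)}$ and the definition \eqref{GEQ__11_} of $y^{[n]}$ sit in the last $n$ rows — the opposite of the ordering you state, though the collection of identities to be checked is of course the same.
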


Let us notice that different vector-functions $f(t)$ can generate
different right-hand-sides of equation \eqref{GEQ__54_} but unique right-hand-side of
equation \eqref{GEQ__1_}.

Due to Theorem 1.2 from \cite{KhrabMAG,KhrabArxiv} and Lemma \ref{lm1} we  have $\frac{\Im H\left(t,l_{\lambda } \right)}{\Im
\lambda }=W\left(t,l_{\lambda } ,-\frac{\Im l_{\lambda } }{\Im
\lambda } \right) \ge 0,\, \, t\in \bar{\mathcal{I}},\, \, \, \Im \lambda
\ne 0$  and therefore
$H\left(t,l_{\lambda } \right)$ satisfy condition \textbf{(A)}
with $\mathcal{A}=\mathcal{B}$. Therefore $\forall\mu\in
\mathcal{B}\cap \mathbb{R}^1$ $W(t,l_\mu,-{\Im l_\mu\over\Im\mu
})=\left.{\partial
H(t,l_\lambda)\over\partial\lambda}\right|_{\lambda=\mu}$ is
Bochner locally integrable in uniform operator topology. Here in
view of \eqref{GEQ__7_}, \eqref{GEQ__15_} $\forall \mu \in
\mathcal{B}\bigcap \mathbb{R}^{1}\ \exists \frac{\Im l_{\mu }
}{\Im \mu } \mathop{=}\limits^{def} \frac{\Im l_{\mu +i0} }{\Im
\left(\mu +i0\right)} =\left.\frac{\partial l_{\lambda }
}{\partial \lambda }\right|_{\lambda=\mu} $, where the
coefficients ${\partial p_j(t,\mu)\over\partial\lambda}$,
${\partial q_j(t,\mu)\over\partial\lambda}$, ${\partial
s_j(t,\mu)\over\partial\lambda}$ of expression ${\partial l_{\mu }
\mathord{\left/{\vphantom{\partial l_{\mu }
\partial \mu }}\right.\kern-\nulldelimiterspace} \partial \mu } $
are Bochner locally integrable in the uniform operator topology.

Also in view of Theorem 1.2 from \cite{KhrabMAG,KhrabArxiv} and Lemma \ref{lm1} one has
\begin{equation} \label{GEQ__53_}
0\le W\left(t,l_{\lambda } ,m\right)\le W\left(t,l_{\lambda }
,-\frac{\Im l_{\lambda } }{\Im \lambda } \right)=\frac{\Im
H\left(t,l_{\lambda } \right)}{\Im \lambda } \quad t\in
\bar{\mathcal{I}},\, \, \, \Im \lambda \ne 0
\end{equation}

Let us consider in $\mathcal{H}_1=\mathcal{H}^r$ the equation
\begin{equation} \label{GEQ__51_}
\frac{i}{2} \left(\left(Q\left(t,l_{\lambda }
\right)\vec{y}\left(t\right)\right)^{{'} } +Q^{*}
\left(t,l_{\lambda }
\right)\vec{y}\hspace{2px}'\left(t\right)\right)-H\left(t,l_{\lambda
} \right)\vec{y}\left(t\right)=W\left(t,l_{\lambda } -\frac{\Im
l_{\lambda } }{\Im \lambda } \right)F\left(t\right).
\end{equation}
This equation is an equation of \eqref{GEQ__46_} type due to
\eqref{GEQ__17_}, \eqref{GEQ__53_}. Equation
\eqref{GEQ__5_} is equivalent to equation \eqref{GEQ__51_}
with $F\left(t\right)=F\left(t,l_{\bar{\lambda }} ,-\frac{\Im
l_{\lambda } }{\Im \lambda } \right)$ due to Theorem \ref{th1} and \eqref{GEQ__18_}.

\begin{definition}\cite{KhrabMAG,KhrabArxiv} Every characteristic operator of equation
\eqref{GEQ__51_} corresponding to the equation
\eqref{GEQ__5_} is said to be a characteristic operator of
equation \eqref{GEQ__5_} on $\mathcal{I}$.
\end{definition}

In some cases we will suppose additionally that
\begin{multline}\exists \lambda _{0} \in \mathcal{B};\, \, \alpha
,\beta \in \bar{\mathcal{I}},\, \, 0\in \left[\alpha ,\beta
\right]\text{, the number }\delta >0:\\
\label{GEQ__55_}
-\int _{\alpha }^{\beta }\left(\frac{\Im l_{\lambda _{0} } }{\Im
\lambda _{0} } \right)\left\{y\left(t,\lambda _{0}
\right),y\left(t,\lambda _{0} \right)\right\} \, dt\ge \delta
\left\| P\vec{y}\left(0,l_{\lambda _{0} } ,m,0\right)\right\| ^{2}
\end{multline}
for any solution $y\left(t,\lambda _{0} \right)$ of
\eqref{GEQ__1_} as $\lambda =\lambda _{0} ,\, \, f=0$, where $P\in B(\mathcal{H}^r)$ is the orthoprojection onto subspace $N^\perp$ which corresponds to equation (\ref{GEQ__51_}). In view
of Theorem 1.2 from \cite{KhrabMAG,KhrabArxiv} this condition is equivalent to the fact that
for the equation \eqref{GEQ__51_}
\begin{gather*}\exists \lambda _{0} \in \mathcal{A}=\mathcal{B};\, \, \alpha
,\beta \in \bar{\mathcal{I}},\, \, 0\in \left[\alpha ,\beta
\right]\text{, the number }\delta >0:
\ \left(\Delta _{\lambda _{0} } \left(\alpha ,\beta
\right)g,g\right)\ge \delta \left\| Pg\right\| ^{2} ,\quad g\in
\mathcal{H}^{r} .
\end{gather*}

Therefore in view of \cite{Khrab5} the fulfillment of
\eqref{GEQ__55_} implies its fulfillment with $\delta
\left(\lambda \right)>0$ instead of $\delta $ for all $\lambda \in
\mathcal{B}$.

Let us notice what in view of \eqref{GEQ__52_} $l_{\lambda } $
can be a represented in form \eqref{GEQ__2_} where
\begin{equation} \label{GEQ__561_}
l=\Re l_{i} , n_{\lambda } =l_{\lambda } -l-\lambda m; {\Im
n_{\lambda } \left\{f,f\right\}
\mathord{\left/{\vphantom{n_{\lambda } \left\{f,f\right\}
\mathcal{I}\lambda \ge 0}}\right.\kern-\nulldelimiterspace}
\Im\lambda \ge 0} ,t\in \bar{\mathcal{I}},\, {\Im}\lambda \ne 0.
\end{equation}

From now on we suppose that $l_{\lambda } $ has a representation
 \eqref{GEQ__2_}, \eqref{GEQ__561_} and therefore the
order of $n_\lambda$ is even.

We consider pre-Hilbert spaces $\mathop{H}\limits^{\circ } $ and
$H$ of vector-functions $y\left(t\right)\in C_{0}^{s}
\left(\bar{\mathcal{I}},\mathcal{H}\right)$ and
$y\left(t\right)\in C^{s}
\left(\bar{\mathcal{I}},\mathcal{H}\right),\,
m\left[y\left(t\right),\, y\left(t\right)\right]<\infty $
correspondingly with a scalar product
\[\left(f\left(t\right),\, g\left(t\right)\right)_{m} =m\left[f\left(t\right),\, g\left(t\right)\right],\]
where  
\begin{gather}
m\left[f,\, g\right]=\int\limits_{\mathcal{I}}m\left\{f,\, g\right\}dt,
\end{gather}
Here $m\left\{f,\, g\right\}$ is defined by \eqref{GEQ__28_}
with expression $m[y]$ from condition \eqref{GEQ__52_} instead of
$L[y]$. Namely, $$m\left\{f,\, g\right\}=\sum\limits _{j=0}^{s/2} (\tilde{p}_{j}
\left(t\right)f^{(j)}(t) ,g^{(j)}(t) ) +{i\over 2}\sum\limits _{j=1}^{s/2}
\left((\tilde{q}^*_{j}
\left(t\right)f^{(j)}(t) ,\, g^{(j-1)}(t) )-(\tilde{q}_{j}
\left(t\right)f^{(j-1)}(t) ,\, g^{(j)}(t) )\right).$$

By $\mathop{L_{m}^{2} }\limits^{\circ } \left(\mathcal{I}\right)$
and $L_{m}^{2} \left(\mathcal{I}\right)$ we denote the completions
of spaces $\mathop{H}\limits^{\circ } $ and $H$ in the norm
$\left\| \, \bullet \, \right\| _{m} =\sqrt{\left(\, \bullet ,\,
\bullet \right)_{m} } $ correspondingly. By
$\mathop{P}\limits^{\circ } $ we denote the orthoprojection in
$\mathop{L_{m}^{2} }\limits^{} \left(\mathcal{I}\right)$ onto
$\mathop{L_{m}^{2} }\limits^{\circ } \left(\mathcal{I}\right)$.

\begin{theorem} \cite{KhrabMAG} (see also \cite{KhrabArxiv})\label{th4}
Let $M\left(\lambda \right)$ be a characteristic operator of equation
\eqref{GEQ__5_}, for which the condition \eqref{GEQ__55_}
with $P=I_{r} $ holds if $\mathcal{I}$ is infinite. Let
$\Im\lambda\not= 0$, $f\left(t\right)\in H$ and
\begin{multline} \label{GEQ__62_}
col\left\{y_{j} \left(t,\lambda ,f\right)\right\}=\\=\int
_{\mathcal{I}}X_{\lambda } \left(t\right) \left\{M\left(\lambda
\right)-\frac{1}{2} sgn\left(s-t\right)\left(iG\right)^{-1}
\right\}X_{\bar{\lambda }}^{*}
\left(s\right)W\left(s,l_{\bar{\lambda }}
,m\right)F\left(s,l_{\bar{\lambda }} ,m\right)\, ds,\, y_{j} \in
\mathcal{H}
\end{multline}
be a solution of equation \eqref{GEQ__54_}, that corresponds
to equation \eqref{GEQ__1_}, where $X_{\lambda }
\left(t\right)$ is the operator solution of homogeneons equation
\eqref{GEQ__54_} such that $X_{\lambda } \left(0\right)=I_{r}
;\, \, G=\Re Q\left(0,l_{\lambda } \right)$ (if $\mathcal{I}$ is
infinite integral \eqref{GEQ__62_} converges strongly). Then
the first component of vector function \eqref{GEQ__62_} is a
solution of equation \eqref{GEQ__1_}. It defines densely
defined in $L_{m}^{2} \left(\mathcal{I}\right)$
integro-differential operator
\begin{equation} \label{GEQ__63_}
R\left(\lambda \right)f=y_{1} \left(t,\lambda ,f\right),\quad f\in H
\end{equation}
which has the following properties after closing

\noindent 1${}^\circ$
\begin{gather}\label{GEQ__64_}
R^{*} \left(\lambda \right)=R\left(\bar{\lambda }\right),\, \, \,
{\Im}\lambda \ne 0
\end{gather}

\noindent 2${}^\circ$
\begin{gather}\label{GEQ__65_}
R\left(\lambda \right)\text{ is holomorphic on
}\mathbb{C}\setminus \mathbb{R}^1
\end{gather}

\noindent 3${}^\circ$
\begin{gather}\label{GEQ__66_}
\left\| R\left(\lambda \right)f\right\| _{L_{m}^{2}
\left(\mathcal{I}\right)}^{2} \le \frac{\Im \left(R\left(\lambda
\right)f,f\right)_{L_{m}^{2} \left(\mathcal{I}\right)} }{\Im
\lambda } ,\, \, \, {\Im}\lambda \ne 0,\, \, \, f\in L_{m}^{2}
\left(\mathcal{I}\right)
\end{gather}
\end{theorem}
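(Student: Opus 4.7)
The strategy is to exploit Theorem \ref{th1}, which reduces equation \eqref{GEQ__1_} to the first-order system \eqref{GEQ__54_}, and then transfer the properties encoded in the defining inequality \eqref{GEQ__47++_} of the characteristic operator $M(\lambda)$ of equation \eqref{GEQ__51_} to an inequality in $L^2_m(\mathcal{I})$. First I would verify that the integral in \eqref{GEQ__62_} converges strongly when $\mathcal{I}$ is infinite and that its value $col\{y_j(t,\lambda,f)\}$ is a solution of the system \eqref{GEQ__54_}. Since \eqref{GEQ__53_} gives $W(t,l_{\bar\lambda},m)\leq W_{\bar\lambda}(t)$, for $f\in H$ the right-hand side $W(t,l_{\bar\lambda},m)F(t,l_{\bar\lambda},m)$ lies in $L^2_{W_{\bar\lambda}}(\mathcal{I})$ with norm controlled by $m[f,f]^{1/2}$. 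On compactly supported $f$, formula \eqref{GEQ__62_} coincides with \eqref{GEQ__47_} for equation \eqref{GEQ__51_} and thus solves \eqref{GEQ__54_}; for general $f\in H$, strong convergence follows from the argument of Lemma 1.2 of \cite{KhrabMAG,KhrabArxiv} together with condition \eqref{GEQ__55_} (with $P=I_r$) when $\mathcal{I}$ is infinite. Theorem \ref{th1} then yields that $y_1$ solves \eqref{GEQ__1_}.

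The central step is the key estimate \eqref{GEQ__66_}, which will also deliver boundedness for the extension by closure. I would apply the Lagrange identity for \eqref{GEQ__51_}: for the solution $\vec y$ with right-hand side $WF$, where $W=W(t,l_{\bar\lambda},m)$,
\begin{equation*}
U[\vec y(\beta)]-U[\vec y(\alpha)]=2(\Im\lambda)\int_\alpha^\beta (W_\lambda \vec y,\vec y)\,dt+2\Im\int_\alpha^\beta (WF,\vec y)\,dt.
\end{equation*}
Multiplying by $\Im\lambda$, letting $(\alpha,\beta)\uparrow\mathcal{I}$, and invoking \eqref{GEQ__47++_} gives
\begin{equation*}
(\Im\lambda)^2\|\vec y\|^2_{L^2_{W_\lambda}}+(\Im\lambda)\,\Im(F,\vec y)_{L^2_W}\leq 0.
\end{equation*}
Using $W\leq W_\lambda$ together with the fact that the structure of the block matrices in \eqref{GEQ__7_}--\eqref{GEQ__16_} and the definition \eqref{GEQ__23_} of $F(t,l_{\bar\lambda},m)$ yield the pointwise identity $(WF,\vec y)_{\mathcal{H}^r}=m\{f,y_1\}$, the second integral equals $(R(\lambda)f,f)_{L^2_m}$. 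A parallel bound for $\|R(\lambda)f\|^2_{L^2_m}$ in terms of $\|\vec y\|^2_{L^2_{W_\lambda}}$ yields \eqref{GEQ__66_}. This shows $R(\lambda)$ is bounded on $H$ and hence extends by closure to $L^2_m(\mathcal{I})$.

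Property 1${}^\circ$ follows from the symmetry $M(\lambda)=M^*(\bar\lambda)$ built into the definition of characteristic operator: interchanging the variables $t$ and $s$ in \eqref{GEQ__62_} and conjugating $\lambda$ shows $(R(\lambda)f,g)_{L^2_m}=\overline{(R(\bar\lambda)g,f)_{L^2_m}}$ for $f,g\in H$, which extends to the closures. Property 2${}^\circ$ is a consequence of the holomorphic $\lambda$-dependence of $M(\lambda)$, $X_\lambda(t)$, and the coefficients of $l_\lambda$; differentiating under the integral sign is justified by the uniform estimates of step one. The main obstacle lies in the first paragraph, namely, rigorously controlling the boundary terms at infinity and justifying strong convergence of the integral \eqref{GEQ__62_} on an infinite $\mathcal{I}$; this is precisely the role played by condition \eqref{GEQ__55_}, which guarantees the non-degeneracy of $\Delta_\lambda(\alpha,\beta)$ and hence the spectral decay needed to absorb the weight $W_\lambda$ on the whole axis.
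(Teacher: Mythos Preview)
This theorem is not proved in the present paper: it is quoted verbatim from \cite{KhrabMAG} (see also \cite{KhrabArxiv}), as the citation tag in the theorem header and the opening sentence of Section~1 make explicit. The only argument the paper supplies is the short remark after the statement showing that $R(\lambda)$ is well defined on equivalence classes in $L^2_m(\mathcal{I})$ (if $m[f,f]=0$ then $W(t,l_{\bar\lambda},m)F(t,l_{\bar\lambda},m)\equiv 0$ by \eqref{GEQ__53_} and Theorem~1.2 of \cite{KhrabMAG,KhrabArxiv}). Consequently there is no in-paper proof to compare your proposal against.

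That said, your outline is consistent with the machinery the paper imports from \cite{KhrabMAG,KhrabArxiv}: the reduction to the first-order system via Theorem~\ref{th1}, the weight comparison \eqref{GEQ__53_}, Lemma~1.2 of \cite{KhrabMAG,KhrabArxiv} for strong convergence of \eqref{GEQ__62_} under \eqref{GEQ__55_}, and the Lagrange identity combined with the defining inequality \eqref{GEQ__47++_} to obtain \eqref{GEQ__66_}. The pointwise identity you invoke, $(W(t,l_{\bar\lambda},m)F(t,l_{\bar\lambda},m),\vec y)_{\mathcal{H}^r}=m\{f,y_1\}$, and its companion $(W(t,l_{\lambda},m)\vec y,\vec y)_{\mathcal{H}^r}=m\{y_1,y_1\}$, are exactly the content of Theorem~1.2 (and, in the proof of Theorem~\ref{th7}, formula~(40) and Theorem~2.1) of \cite{KhrabMAG,KhrabArxiv}; you correctly flag them as the structural facts linking the $W$-inner product on $\mathcal{H}^r$ to the $m$-form on $\mathcal{H}$. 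Your treatment of $1^\circ$ and $2^\circ$ is standard. So your sketch matches the intended route, but a genuine comparison would require consulting \cite{KhrabMAG,KhrabArxiv} rather than this paper.
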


Let us notice that the definition of the operator $R\left(\lambda
\right)$ is correct. Indeed if $f\left(t\right)\in H$,
$m\left[f,f\right]=0$, then $R\left(\lambda \right)f\equiv 0$
since $W\left(t,l_{\bar{\lambda }}
,m\right)F\left(t,l_{\bar{\lambda }},m \right)\equiv 0$ due to
\eqref{GEQ__53_} and Theorem 1.2 from \cite{KhrabMAG,KhrabArxiv}.

Also let us notice that if $L^2_m(\mathcal{I})=\overset{\circ\ }{L^2_m}(\mathcal{I})$ then Theorem \ref{th4} is valid with $f(t)\in \overset{\circ}H$ instead of $f(t)\in H$ and without condition \eqref{GEQ__55_} with $P=I_r$ if $\mathcal{I}$ is infinite.

The resolvent $R(\lambda)$ can be represented in another forms. (In \eqref{prop12formula}, \eqref{prop13formula} integrals converge strongly if the interval of integration is infinite.)

\begin{proposition}\cite{KhrabMAG}\label{rm31} Let us represent characteristic operator $M\left(\lambda \right)$ from Theorem \ref{th4} in the form \eqref{13}. Then $R(\lambda)f$  \eqref{GEQ__63_} can be represented in the form
\begin{multline}\label{prop12formula}
R\left(\lambda \right)f=\int _{a}^{t}\sum _{j=1}^{r}y_{j} \left(t,\lambda \right) \sum _{k=0}^{{s\mathord{\left/ {\vphantom {s 2}} \right. \kern-\nulldelimiterspace} 2} }\left(x_{j}^{\left(k\right)} \left(s,\bar{\lambda }\right)\right)^{*}  \mathrm{m}_{k} \left[f\left(s\right)\right]ds+\\+
\int _{t}^{b}\sum _{j=1}^{r}x_{j} \left(t,\lambda \right) \sum _{k=0}^{{s\mathord{\left/ {\vphantom {s 2}} \right. \kern-\nulldelimiterspace} 2} }\left(y_{j}^{\left(k\right)} \left(s,\bar{\lambda }\right)\right)^{*}  \mathrm{m}_{k} \left[f\left(s\right)\right]ds
\end{multline} 
where $x_{j} \left(t,\lambda \right),y_{j} \left(t,\lambda \right)\in B\left(\mathcal{H}\right)$ are operator solutions of equation \eqref{GEQ__1_} as $f=0$, such that $\left(x_{1} \left(t,\lambda \right),\, \ldots ,x_{r} \left(t,\lambda \right)\right)$ is the first row $\left[X_{\lambda } \left(t\right)\right]_{1}\in B(\mathcal{H}^r,\mathcal{H}) $ of operator matrix $X_{\lambda } \left(t\right),\, \left(y_{1} \left(t,\lambda \right),\, \ldots ,y_{r} \left(t,\lambda \right)\right)=\left[X_{\lambda } \left(t\right)\right]_{1} \mathcal{P}\left(\lambda \right)\left(iG\right)^{-1} $, 
\begin{gather}\label{mk}
\mathrm{m}_{k} \left[f\left(s\right)\right]=\tilde{p}_{k} \left(s\right)f^{\left(k\right)} \left(s\right)+\frac{i}{2} \left(\tilde{q}_{k}^{*} \left(s\right)f^{\left(k+1\right)} \left(s\right)-\tilde{q}_{k} \left(s\right)f^{\left(k-1\right)} \left(s\right)\right)\left(\tilde{q}_{0} \equiv 0,\, \tilde{q}_{\frac{s}{2} +1} \equiv 0\right).
\end{gather}
\end{proposition}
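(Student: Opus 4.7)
The plan is to rewrite the kernel of \eqref{GEQ__62_} using the characteristic projection form \eqref{13} of $M(\lambda)$, split the integral at $s=t$, and reduce everything to a pointwise Lagrange-type identity. First, from $M(\lambda)=(\mathcal{P}(\lambda)-\tfrac12 I)(iG)^{-1}$ together with the algebraic identity $G\mathcal{P}(\bar\lambda)+\mathcal{P}^*(\lambda)G=G$ (which follows immediately from $\mathcal{P}(\lambda)=iM(\lambda)G+\tfrac12 I$ and $M^*(\lambda)=M(\bar\lambda)$), one computes that
\[
X_\lambda(t)\bigl[M(\lambda)-\tfrac12\operatorname{sgn}(s-t)(iG)^{-1}\bigr]X_{\bar\lambda}^*(s)=\begin{cases}\Psi_\lambda(t)X_{\bar\lambda}^*(s),&s<t,\\ X_\lambda(t)\Psi_{\bar\lambda}^*(s),&s>t,\end{cases}
\]
where $\Psi_\lambda(t):=X_\lambda(t)\mathcal{P}(\lambda)(iG)^{-1}$; for the $s>t$ case one additionally uses $((iG)^{-1})^{*}=-(iG)^{-1}$ to factor the kernel as an adjoint.

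Splitting $\int_{\mathcal{I}}$ as $\int_a^t+\int_t^b$ and taking the first component of $\vec y(t,\lambda,f)$, the definitions $[\Psi_\lambda(t)]_1=(y_1(t,\lambda),\dots,y_r(t,\lambda))$ and $[X_\lambda(t)]_1=(x_1(t,\lambda),\dots,x_r(t,\lambda))$ give
\[
R(\lambda)f=\int_a^t\sum_{i=1}^r y_i(t,\lambda)\bigl[X_{\bar\lambda}^*(s)W(s,l_{\bar\lambda},m)F(s,l_{\bar\lambda},m)\bigr]_i\,ds+\int_t^b\sum_{i=1}^r x_i(t,\lambda)\bigl[\Psi_{\bar\lambda}^*(s)W(s,l_{\bar\lambda},m)F(s,l_{\bar\lambda},m)\bigr]_i\,ds.
\]
By Theorem \ref{th1} the $i$-th column of $X_{\bar\lambda}(s)$ is $\vec x_i(s,l_{\bar\lambda},m,0)$ and the $i$-th column of $\Psi_{\bar\lambda}(s)$ is $\vec y_i(s,l_{\bar\lambda},m,0)$, so both bracketed expressions become instances of the single pointwise identity
\[
\sum_{j=1}^r\bigl[\vec z(s,l_{\bar\lambda},m,0)\bigr]_j^{*}\bigl[W(s,l_{\bar\lambda},m)F(s,l_{\bar\lambda},m)\bigr]_j=\sum_{k=0}^{s/2}(z^{(k)}(s,\bar\lambda))^{*}\mathrm{m}_k[f(s)]
\]
applied to any operator solution $z(s,\bar\lambda)$ of $l_{\bar\lambda}[z]=0$. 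Once this identity is established, specializing to $z=x_i$ and $z=y_i$ produces \eqref{prop12formula}; strong convergence of the two integrals on an infinite $\mathcal{I}$ is inherited from the corresponding convergence of \eqref{GEQ__62_} in Theorem \ref{th4}.

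The main obstacle is the pointwise identity in the last display. It is purely algebraic at each $s$ (no integration by parts is needed), but establishing it requires unpacking the explicit block formulas \eqref{GEQ__8_}, \eqref{GEQ__16_} for $W=C^{-*}\|m_{\alpha\beta}\|C^{-1}$, the matrix $C(s,l_{\bar\lambda})$ from \eqref{GEQ__9_}, the case-dependent assembly of $F$ from \eqref{GEQ__23_}, and the quasi-derivative relations \eqref{GEQ__10_}--\eqref{GEQ__12_}, then checking separately in each of the four cases of \eqref{GEQ__23_} that when the factor $C^{-1}$ converts the quasi-derivative components of $\vec z$ and of $F$ into plain derivatives, the block $\|m_{\alpha\beta}\|$ reassembles them into precisely the operator expressions $\mathrm{m}_k[f]$ of \eqref{mk} paired against $(z^{(k)})^{*}$. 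The bookkeeping is delicate because the $-if^{(n)}$ correction when $r=2n+1$ is odd and the asymmetric distribution of quasi-derivatives between $\vec z$ and $F$ when $r=s=2n$ each demand a separate verification.
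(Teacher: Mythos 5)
The paper itself gives no proof of Proposition \ref{rm31} (it is imported from \cite{KhrabMAG}), so there is nothing to compare against line by line; judged on its own, your reconstruction is sound and its checkable steps are correct. The identity $G\mathcal{P}(\bar\lambda)+\mathcal{P}^{*}(\lambda)G=G$ does follow from \eqref{13} together with $M^{*}(\lambda)=M(\bar\lambda)$, and combined with $\left((iG)^{-1}\right)^{*}=-(iG)^{-1}$ it gives $(\mathcal{P}(\lambda)-I)(iG)^{-1}=\left(\mathcal{P}(\bar\lambda)(iG)^{-1}\right)^{*}$, so the kernel of \eqref{GEQ__62_} does factor as $\Psi_{\lambda}(t)X_{\bar\lambda}^{*}(s)$ for $s<t$ and $X_{\lambda}(t)\Psi_{\bar\lambda}^{*}(s)$ for $s>t$ with $\Psi_{\lambda}=X_{\lambda}\mathcal{P}(\lambda)(iG)^{-1}$; this is exactly the splitting that underlies \eqref{prop12formula} (and its first-order analogue \eqref{rlambdaf}). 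The one ingredient you state without verifying --- the pointwise identity $\left(W(s,l_{\bar\lambda},m)F(s,l_{\bar\lambda},m),\vec z(s,l_{\bar\lambda},m,0)h\right)=m\{f,zh\}(s)$ for operator solutions $z$ of $l_{\bar\lambda}[z]=0$ --- is indeed true and is the designed-in property of the construction of $W$ and $F$ (it is the pointwise, Dirichlet-form content of Theorem 2.1 of \cite{KhrabMAG}), so deferring its four-case verification is consistent with the level of detail at which the present paper operates, e.g.\ in \eqref{GEQ__91_}. One point worth making explicit: in \eqref{GEQ__91_} the paper invokes that theorem in its integrated form with $m[f]$, which costs an integration by parts and therefore needs $f\in\overset{\circ}{H}$ or a finite interval, whereas your pointwise $\mathrm{m}_{k}$-version avoids all boundary terms and is what is actually required for \eqref{prop12formula} with $f\in H$ (compare \eqref{prop21_1} with \eqref{prop21_2} in Proposition \ref{prop21}); your choice is the correct one. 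Strong convergence of the two split integrals on an infinite $\mathcal{I}$ is covered, as in Proposition \ref{rm21}, by Lemma 1.2 of \cite{KhrabMAG} rather than being merely ``inherited'' from \eqref{GEQ__62_}, but that is a citation issue, not a gap.
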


In view of Proposition \ref{rm21}, Corollary \ref{cor11}, Theorem \ref{th4} and also Theorem 1.2 from \cite{KhrabArxiv,KhrabMAG} the following proposition is valid.

\begin{proposition}\cite{KhrabMAG}\label{rm32} Let $r=2n,\ \mathcal{I}=(0,b)$, $b\leq\infty$, condition \eqref{GEQ__55_} hold with $P=I_r$. (Therefore for equation \eqref{GEQ__51_} condition \eqref{star8} holds.) Let for characteristic operator $M\left(\lambda \right)$ of equation \eqref{GEQ__5_} condition \eqref{GEQ__47++_} be separated. (Therefore $M\left(\lambda \right)$ has representation \eqref{13} where characteristic projection $\mathcal{ P}\left(\lambda \right)$ can be represented in the form \eqref{GEQ__64ad1_}, \eqref{GEQ__65ad1_} with the help of some Nevanlinna pair $\{-a(\lambda),b(\lambda)\}$ and some Weyl function $m(\lambda)$ of equation \eqref{GEQ__51_}; this equation with $F(t)=0$ has an operator solutions $U_{\lambda } \left(t\right),\, V_{\lambda } \left(t\right)$ \eqref{GEQ__66ad1_}-\eqref{GEQ__68ad1_}). Let domains ${D,}\, {D}_{1} $ be the same as in Proposition \ref{rm21}. Then $R(\lambda)f$  \eqref{GEQ__63_} for $\lambda \in {D}\bigcup {D}_{1}$ can be represented in the form
\begin{multline}\label{prop13formula}
R\left(\lambda \right)f=\int _{0}^{t}\sum _{j=1}^{n}v_{j} \left(t,\lambda \right) \sum _{k=0}^{s/2}\left(u_{j}^{\left(k\right)} \left(s,\bar{\lambda }\right)\right)^{*}  \mathrm{m}_{k} \left[f\left(s\right)\right]ds +\\ 
+\int _{t}^{b}\sum _{j=1}^{n}u_{j} \left(t,\lambda \right) \sum _{k=0}^{s/2}\left(v_{j}^{\left(k\right)} \left(s,\bar{\lambda }\right)\right)^{*}  \mathrm{m}_{k} \left[f\left(s\right)\right]ds ,  
\end{multline} 
where $u_{j} \left(t,\lambda \right),\, v_{j} \left(t,\lambda \right)\in B\left(\mathcal{H}\right)$ are operator solutions of equation \eqref{GEQ__1_} as $f=0$, such that, $\left(u_{1} \left(t,\lambda \right),\ldots u_{n} \left(t,\lambda \right)\right)=\left[X_{\lambda } \left(t\right)\right]_{1} \left(\begin{array}{c} {a\left(\lambda \right)} \\ {b\left(\lambda \right)} \end{array}\right)$,
\begin{equation} \label{GEQ__115_} 
\left(v_{1} \left(t,\lambda \right),\ldots ,v_{n} \left(t,\lambda \right)\right)=\left[X_{\lambda } \left(t\right)\right]_{1} \left(\begin{array}{c} {b\left(\lambda \right)} \\ {-a\left(\lambda \right)} \end{array}\right)K^{-1} \left(\lambda \right)+\left(u_{1} \left(t,\lambda \right),\ldots ,u_{n} \left(t,\lambda \right)\right)m_{a,b} \left(\lambda \right),  
\end{equation} 
$K\left(\lambda \right),\, m_{a,b} \left(\lambda \right)$ see \eqref{GEQ__67ad1_}, \eqref{GEQ__68ad1_};  
$$\|\left(v_{1} \left(t,\lambda \right),\ldots ,v_{n} \left(t,\lambda \right)\right)h\|^2_{m}\leq {\Im(m(\lambda)g,g)\over \Im\lambda},\ \Im\lambda\not= 0,$$
where $g=(b^*(\bar\lambda)-a^*(\bar\lambda)m(\lambda))^{-1}h$, $h\in\mathcal{H}^n$ and therefore $$\left(v_{1} \left(t,\lambda \right),\ldots ,v_{n} \left(t,\lambda \right)\right)h\in L_{m}^{2} \left(\mathcal{I}\right)\, \, \forall h\in \mathcal{H}^{n}. $$

Moreover if $a\left(\lambda \right)=a\left(\bar{\lambda }\right),\, b\left(\lambda \right)=b\left(\bar{\lambda }\right)$ as $\Im\lambda\not= 0$ then we can set $D=\mathbb{C}_+$ and
\begin{gather}\label{norm}
\left\| \left(v_{1} \left(t,\lambda \right),\ldots ,v_{n} \left(t,\lambda \right)\right)h\right\| _{m}^{2} \le \frac{\Im\left(m_{a,b} \left(\lambda \right)h,h\right)}{\Im\lambda },\ \Im\lambda \ne 0.
\end{gather}

Let contraction $v(\lambda)\in B(\mathcal{H}^n)$ satisfy the conditions of Lemma \ref{lm12} and domains $D$, $D_1$ be the same as in Lemma \ref{lm12}. Then corresponding solution $\left(v_{1} \left(t,\lambda \right),\ldots ,v_{n} \left(t,\lambda \right)\right)$ (\ref{GEQ__115_}),(\ref{GEQ__67ad1_}),(\ref{GEQ__68ad1_}),(\ref{alambda}),(\ref{barlambda}) satisfies inequality \eqref{norm} ($\left(v_{1} \left(t,\lambda \right),\ldots ,v_{n} \left(t,\lambda \right)\right)\overset{def}{=}[V_\lambda(t)]_1$, $\lambda\notin D\cup D_1$, where $[V_\lambda(t)]_1\in B(\mathcal{H}^n,\mathcal{H})$ is an analogue of $[X_\lambda(t)]_1$ for $V_\lambda(t)$ \eqref{prop_add1}).

\end{proposition}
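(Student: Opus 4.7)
The plan is to combine Theorem \ref{th4}, Proposition \ref{rm31}, Proposition \ref{rm21}, and Corollary \ref{cor11}, reducing Proposition \ref{rm32} to an identification of rows in a previously established kernel representation. First I would observe that under the hypothesis \eqref{GEQ__55_} with $P=I_r$, the equivalence recorded after \eqref{GEQ__55_} yields condition \eqref{star8} for the first-order system \eqref{GEQ__51_}, so Proposition \ref{rm21} applies to that system; since $M(\lambda)$ is separated, its characteristic projection $\mathcal{P}(\lambda)$ admits the parametrisation \eqref{GEQ__64ad1_}--\eqref{GEQ__65ad1_}, and $U_\lambda(t),V_\lambda(t)$ of \eqref{GEQ__66ad1_} are available.

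Next I would recall from Theorem \ref{th4} that $R(\lambda)f$ is the first component $y_1$ of \eqref{GEQ__62_}, and that this vector is precisely $\mathcal{R}_\lambda F$ of \eqref{GEQ__47_} applied to equation \eqref{GEQ__51_} with right-hand side $F(s,l_{\bar\lambda},m)$. Substituting the alternative kernel \eqref{rlambdaf} from Proposition \ref{rm21} and then extracting the first row of the resulting matrix identity yields two integrals whose operator factors in $t$ are exactly $[X_\lambda(t)]_1\binom{a(\lambda)}{b(\lambda)} = (u_1,\ldots,u_n)$ and the combination prescribed in \eqref{GEQ__115_} for $(v_1,\ldots,v_n)$. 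The remaining task is to identify the $s$-side factor $U_{\bar\lambda}^*(s)W(s,l_{\bar\lambda},m)F(s,l_{\bar\lambda},m)$ (and its $V$-analogue) with $\sum_{k=0}^{s/2}(u_j^{(k)}(s,\bar\lambda))^*\mathrm{m}_k[f(s)]$ (respectively with the $v_j^{(k)}$ version). This is the same algebraic matching that underlies the passage from Theorem \ref{th4} to Proposition \ref{rm31}; it uses only the explicit block structure of $W(t,l_{\bar\lambda},m)$ in \eqref{GEQ__8_} and of $F(t,l_{\bar\lambda},m)$ in \eqref{GEQ__23_} together with the definition \eqref{mk} of $\mathrm{m}_k$, and the fact that the rows of $X_\lambda(t)$ appearing in Proposition \ref{rm31} factor through $\binom{a(\lambda)}{b(\lambda)}$ and $\binom{b(\lambda)}{-a(\lambda)}K^{-1}(\lambda)+\binom{I_n}{0}\ldots$ once $\mathcal{P}(\lambda)(iG)^{-1}$ is rewritten via \eqref{GEQ__64ad1_}.

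For the norm inequality, I would start from \eqref{add_ineq} in Proposition \ref{rm21}, namely $\int_0^\beta V_\lambda^*W_\lambda V_\lambda\,dt\le(b^*(\bar\lambda)-a^*(\bar\lambda)m(\lambda))^{-*}(\Im m(\lambda)/\Im\lambda)(b^*(\bar\lambda)-a^*(\bar\lambda)m(\lambda))^{-1}$, apply it to the vector $h$, and then use \eqref{GEQ__53_} in the form $W(t,l_\lambda,m)\le W_\lambda(t)=\Im H(t,l_\lambda)/\Im\lambda$ to pass from the $L^2_{W_\lambda}$ norm to the $L^2_m$ norm of the first component $(v_1,\ldots,v_n)h$; Theorem 1.2 of \cite{KhrabMAG,KhrabArxiv} guarantees that the $L^2_m$ norm of the scalar solution agrees with the $W(t,l_\lambda,m)$-weighted $L^2$ norm of its $\vec{y}$-image. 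The real case $a(\lambda)=a(\bar\lambda),\,b(\lambda)=b(\bar\lambda)$ and \eqref{norm} follow because Proposition \ref{rm21} shows that the right-hand side of \eqref{add_ineq} then coincides with $\Im m_{a,b}(\lambda)/\Im\lambda$.

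Finally, the last clause about contractions $v(\lambda)$ satisfying Lemma \ref{lm12} follows by invoking Corollary \ref{cor11} in place of the inequality \eqref{add_ineq} and repeating the same row extraction. I expect the main obstacle to be the bookkeeping in step two: correctly checking that the first row of $V_\lambda(t)$ in \eqref{GEQ__66ad1_} and the action $V_{\bar\lambda}^*(s)W(s,l_{\bar\lambda},m)F(s,l_{\bar\lambda},m)$ reorganise into the compact form $\sum_k(v_j^{(k)}(s,\bar\lambda))^*\mathrm{m}_k[f(s)]$; all other ingredients are direct quotations from Propositions \ref{rm21}, \ref{rm31} and Theorem \ref{th4}.
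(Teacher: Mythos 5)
Your proposal is correct and follows essentially the same route as the paper, which derives Proposition \ref{rm32} precisely by combining Proposition \ref{rm21} (the kernel \eqref{rlambdaf} and inequalities \eqref{add_ineq}, \eqref{vlambda}), Corollary \ref{cor11}, Theorem \ref{th4}, and Theorem 1.2 of \cite{KhrabMAG,KhrabArxiv}; your extra appeal to Proposition \ref{rm31} as the template for the first-row extraction and the $\mathrm{m}_k$-bookkeeping is just a reasonable elaboration of the same argument.
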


Comparison of Theorem \ref{th4}, Propositions \ref{rm31} , \ref{rm32} with results for various particular cases see in \cite{KhrabMAG}.


\section{Eigenfunction expansions}

It is known \cite{DSnoo1} that \eqref{GEQ__64_} -
\eqref{GEQ__66_} implies \eqref{GEQ__3_}, where
$E_{\mu}\in B\left(L_{m}^{2} \left(\mathcal{I}\right)\right)$,
$E_{\mu }=E_{\mu -0}$,
\begin{equation} \label{GEQ__83_}
0\le E_{\mu _{1} } \le E_{\mu _{2} } \le \mathbf{I} ,\, \, \, \mu
_{1} <\mu _{2} ;\, \, \, E_{-\infty } =0.
\end{equation}
Here $\mathbf{I}$ is the identity operator in $L_{m}^{2}
\left(\mathcal{I}\right)$. We denote $E_{\alpha \beta }
=\frac{1}{2} \left[E_{\beta +0} +E_{\beta } -E_{\alpha +0}
-E_{\alpha } \right]$.

\begin{theorem}\label{th7}
Let $M\left(\lambda \right)$ be the characteristic operator of
equation \eqref{GEQ__5_} (and therefore by
\cite[p.162]{Khrab5} $\Im {P} M\left(\lambda \right){P} /\Im
\lambda \ge 0$ as $\Im \lambda \ne 0$) and $\sigma \left(\mu
\right)=w-\mathop{\lim }\limits_{\varepsilon \downarrow 0}
\frac{1}{\pi } \int _{0}^{\mu }\Im  { P} M\left(\mu +i\varepsilon
\right) {P} d\mu $ be the spectral operator-function that
corresponds to ${P} M\left(\lambda \right){P} $.

Let the condition
\eqref{GEQ__55_} with $P=I_r$ hold if $\mathcal{I}$ is
infinite. Let $E_{\mu } $ be generalized spectral family
\eqref{GEQ__83_} corresponding by \eqref{GEQ__3_} to the
resolvent $R\left(\lambda \right)$ from Theorem \ref{th4} which is
constructed with the help of characteristic operator $M\left(\lambda \right)$. Let $\mathcal{B}^1=\mathcal{B}\cap\mathbb{R}^1$. Then
for any $\left[\alpha ,\beta \right]\subset \mathcal{B}^1$ the
equalities
\begin{equation} \label{GEQ__84_}
\begin{matrix}
{\mathop{P}\limits^{\circ } E_{\alpha ,\beta } f\left(t\right)=
\mathop{P}\limits^{\circ } \int _{\alpha }^{\beta }\left[X_{\mu }
\left(t\right)\right]_{1}  d\sigma \left(\mu \right)\varphi
\left(\mu ,\, f\right),\text{ if }f\left(t\right)\in
\mathop{H}\limits^{\circ },\ \mathcal{I}\text{ is infinite},}
\\ {E_{\alpha ,\beta } f\left(t\right)=\int _{\alpha }^{\beta }\left[X_{\mu } \left(t\right)\right]_{1} d\sigma \left(\mu \right)\varphi \left(\mu ,f\right) ,\, \, if\, \, f\left(t\right)\in H,\, \mathcal{I}\text{ is finite}} \end{matrix}
\end{equation}
are valid in $L_{m}^{2} \left(\mathcal{I}\right)$, where
$\left[X_{\lambda } \left(t\right)\right]_{1} \in
B\left(\mathcal{H}^{r} ,\, \mathcal{H}\right)$ is the first row of
the operator solution $X_{\lambda } \left(t\right)$ of homogeneous
equation \eqref{GEQ__54_} which is written in the matrix form and such
that $X_{\lambda } \left(0\right)=I_{r} $,
\begin{equation} \label{GEQ__85_}
\varphi \left(\mu ,f\right)=\left\{\begin{array}{l} {\int
_{\mathcal{I}}\left(\left[X_{\mu } \left(t\right)\right]_{1}
\right)^{*} m\left[f\right]dt\text{ if }f\left(t\right)\in
\mathop{H}\limits^{\circ } } \\ {\int
_{\mathcal{I}}\left(\left[X_{\mu } \left(t\right)\right]_{1}
\right)^{*} W\left(t,l_{\mu } ,m\right)F\left(t,l_{\mu }
,m\right)dt,\text{ if }f\left(t\right)\in H,\, \mathcal{I}\text{ is finite}}\text{ or }f(t)\in \overset{\circ}H
\end{array}\right.,
\end{equation}
$\mu \in \left[\alpha ,\beta \right]$.

Moreover, if vector-function $f\left(t\right)$ satisfy the
following conditions
\begin{gather}\label{star3}
\begin{matrix}
\mathop{{ P} }\limits^{\circ } E_{\infty } f=f,\, \, \mathop{{ P}
}\limits^{\circ } \int _{\mathbb{R}^{1}\setminus
\mathcal{B}^1}dE_{\mu } f =0\text{ if } f\in
\mathop{H}\limits^{{}^\circ },\ \mathcal{I}\text{ is infinite}
\\E_{\infty }f=f,\
\int_{\mathbb{R}^1\setminus \mathcal{B}^1}dE_{\mu } f =0\text{ if
}f\in H,\, \, \mathcal{I}\text{ is finite }
\end{matrix}
\end{gather}
then the inversion formulae in $L_{m}^{2} \left(\mathcal{I}\right)$
\begin{equation} \label{GEQ__87_}
\begin{matrix} {f\left(t\right)=\mathop{P}\limits^{\circ }
\int_{\mathcal{B}^1}\left[X_{\mu } \left(t\right)\right]_{1} d\sigma
\left(\mu \right)\varphi \left(\mu
,f\right)\text{ if }f\left(t\right)\in \mathop{H}\limits^{\circ } },\mathcal{I}\text{ is infinite}, \\
{f\left(t\right)=\int _{\mathcal{B}^1}^{}\left[X_{\mu }
\left(t\right)\right]_{1}  d\sigma \left(\mu \right)\varphi
\left(\mu ,f\right),\, \, if\, f\left(t\right)\in H,\mathcal{I}\,
\, is\, finite} \end{matrix}
\end{equation}
and Parceval's equality
\begin{equation} \label{GEQ__88_}
m\left[f,g\right]=\int _{\mathcal{B}^1}\left(d\sigma \left(\mu
\right)\varphi \left(\mu ,f\right),\varphi \left(\mu
,g\right)\right) ,
\end{equation}
are valid, where $g\left(t\right)\in \mathop{H}\limits^{\circ } $
if $\mathcal{I}$ is infinite or $g\left(t\right)\in H$, if
$\mathcal{I}$ is finite.

In general case for $f\left(t\right),\, g\left(t\right)\in
\mathop{H}\limits^{\circ }$ if $\mathcal{I}$ is infinite or
$f\left(t\right),g\left(t\right)\in {H}$ if $\mathcal{I}$ is
finite, the inequality of Bessel type
\begin{equation} \label{GEQ__89_}
m\left[f\left(t\right),g\left(t\right)\right]\le \int
_{\mathcal{B}^1}\left(d\sigma \left(\mu \right)\varphi \left(\mu
,f\right),\, \varphi \left(\mu ,g\right)\right)
\end{equation}
is valid.
\end{theorem}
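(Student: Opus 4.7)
The plan is to derive \eqref{GEQ__84_} by applying the Stieltjes--Perron inversion formula to the representation \eqref{GEQ__3_} of $R(\lambda)$. For $[\alpha,\beta]\subset\mathcal{B}^1$ and test vectors $f,g$ in $\mathop{H}\limits^{\circ}$ (resp.\ $H$ when $\mathcal{I}$ is finite), properties \eqref{GEQ__64_}--\eqref{GEQ__66_} together with \eqref{GEQ__83_} yield
\[
(E_{\alpha,\beta}f,g)_m=\lim_{\varepsilon\downarrow 0}\frac{1}{\pi}\int_{\alpha}^{\beta}\Im(R(\mu+i\varepsilon)f,g)_m\,d\mu.
\]
I would then substitute the explicit kernel of $R(\lambda)$ supplied by \eqref{GEQ__62_}--\eqref{GEQ__63_}, obtaining a bilinear expression in $X_\lambda(t)$ and $X_{\bar\lambda}^*(s)$ sandwiching the operator $M(\lambda)-\frac{1}{2}\mathrm{sgn}(s-t)(iG)^{-1}$.

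The central computation is identifying the imaginary part of this kernel as $\varepsilon\downarrow 0$. The sign-term contributes nothing in the limit: since $(iG)^{-1}$ is anti-self-adjoint and $\lambda\mapsto X_\lambda(t)(iG)^{-1}X_{\bar\lambda}^*(s)$ is jointly analytic in $\lambda\in\mathcal{B}$ and takes self-adjoint values on $\mathcal{B}^1$ in the appropriate weak sense, its imaginary part at $\mu+i\varepsilon$ is $O(\varepsilon)$ uniformly on compact subsets of $\mathcal{I}\times\mathcal{I}$. For the principal term one has, in the spirit of \cite{Khrab5},
\[
\Im\bigl(X_\lambda(t)\,PM(\lambda)P\,X_{\bar\lambda}^*(s)\bigr)\big|_{\lambda=\mu+i\varepsilon}=X_\mu(t)P\bigl(\Im M(\mu+i\varepsilon)\bigr)PX_\mu^*(s)+O(\varepsilon),
\]
so the Stieltjes inversion of $PM(\lambda)P$ produces exactly $d\sigma(\mu)$ upon integration in $\mu$. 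Finally, applying Theorem \ref{th1} and \eqref{GEQ__53_} I would reduce the factor $W(s,l_{\bar\mu},m)F(s,l_{\bar\mu},m)$ paired against $([X_\mu(s)]_1)^*$ to the simpler expression $([X_\mu(s)]_1)^*\,m[f](s)$ modulo the kernel of $\mathop{P}\limits^{\circ}$, arriving at definition \eqref{GEQ__85_} and hence at \eqref{GEQ__84_}.

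With \eqref{GEQ__84_} in hand, the remaining assertions follow by standard spectral arguments. Pairing with $f$ and polarizing gives $m[\mathop{P}\limits^{\circ}E_{\alpha,\beta}f,g]=\int_\alpha^\beta(d\sigma(\mu)\varphi(\mu,f),\varphi(\mu,g))$. Monotonicity \eqref{GEQ__83_} and $E_\infty\le\mathbf{I}$ yield, upon $[\alpha,\beta]\uparrow\mathcal{B}^1$, the Bessel inequality \eqref{GEQ__89_}. Under hypothesis \eqref{star3} the left-hand side attains $(f,g)_m$ in the limit, which together with polarization delivers Parseval's equality \eqref{GEQ__88_}; the inversion formula \eqref{GEQ__87_} is then obtained by passing to the strong $L_m^2$-limit in \eqref{GEQ__84_} as $\alpha\to-\infty$, $\beta\to\infty$ along an exhausting sequence in $\mathcal{B}^1$.

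The main obstacle is the careful passage to the limit $\varepsilon\downarrow 0$: showing that $\varphi(\cdot,f)$ is well-defined as an element of $L^2(\mathbb{R}^1,d\sigma;\mathcal{H}^r)$, uniformly controlling the kernel on $[\alpha,\beta]\subset\mathcal{B}^1$ so that the sign-term contribution genuinely vanishes, and rigorously justifying the insertion of $\mathop{P}\limits^{\circ}$ through the spectral integral in the infinite-interval case. The fact that $W(\cdot,l_{\bar\mu},m)F(\cdot,l_{\bar\mu},m)$ reduces to $m[f]$ only modulo $\ker\mathop{P}\limits^{\circ}$ is precisely why this projection appears on the left-hand side of the first formulae in \eqref{GEQ__84_} and \eqref{GEQ__87_}; ensuring this reduction carries through is where hypothesis \eqref{GEQ__55_} with $P=I_r$ enters.
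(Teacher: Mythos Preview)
Your overall strategy---Stieltjes inversion applied to \eqref{GEQ__3_}---is the same as the paper's, but two points in your execution diverge from what actually happens, and one of them is a genuine misconception.

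First, the identity you describe as holding only ``modulo $\ker\mathop{P}\limits^{\circ}$'' is in fact exact. The paper shows (its equation \eqref{GEQ__91_}, via Theorem~2.1 of \cite{KhrabMAG,KhrabArxiv}) that for every $h\in\mathcal{H}^r$,
\[
\Bigl(\int_{\mathcal{I}}X_{\bar\lambda}^*(t)W(t,l_{\bar\lambda},m)F(t,l_{\bar\lambda},m)\,dt,\;h\Bigr)
=\Bigl(\int_{\mathcal{I}}\bigl([X_{\bar\lambda}(t)]_1\bigr)^* m[f]\,dt,\;h\Bigr),
\]
so $\varphi(\mu,f)$ is the same vector in $\mathcal{H}^r$ under both descriptions in \eqref{GEQ__85_}. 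Consequently your explanation for the appearance of $\mathop{P}\limits^{\circ}$ is wrong. The projection appears for a more mundane reason: the paper first establishes the \emph{bilinear} identity
\[
(E_{\alpha,\beta}f,g)_{L^2_m}=\int_\alpha^\beta\bigl(d\sigma(\mu)\varphi(\mu,f),\varphi(\mu,g)\bigr)
\]
(its \eqref{GEQ__92_}), and only then recovers \eqref{GEQ__84_} by rewriting the right side as $(\,\int_\alpha^\beta[X_\mu]_1\,d\sigma\,\varphi(\mu,f),\,g)_m$. Since in the infinite-interval case the test vectors $g$ range only over $\mathop{H}\limits^{\circ}$, this determines merely the $\mathop{P}\limits^{\circ}$-component of $E_{\alpha,\beta}f$. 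Condition \eqref{GEQ__55_} with $P=I_r$ is used earlier, to guarantee that $R(\lambda)$ exists on all of $H$ via Theorem~\ref{th4}, not for the reduction you describe.

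Second, rather than arguing directly that the $\mathrm{sgn}(s-t)(iG)^{-1}$ term has imaginary part $O(\varepsilon)$, the paper converts $(y_1(\cdot,\lambda,f),g)_m$ into an inner product in $L^2_{W(t,l_\lambda,m)}(\mathcal{I})$ using formula~(40) of \cite{KhrabMAG,KhrabArxiv}; in that space the difference $(\cdot,\lambda)-(\cdot,\bar\lambda)$ reduces the whole expression to a pairing of $M(\lambda)$ against $M^*(\lambda)$, and the sign term disappears structurally. A generalized Stieltjes inversion formula of \cite{Shtraus1,Bruk1} then produces $d\sigma(\mu)$. Your direct kernel argument could perhaps be made rigorous, but the claimed ``self-adjoint values on $\mathcal{B}^1$'' of $X_\lambda(t)(iG)^{-1}X_{\bar\lambda}^*(s)$ needs a precise formulation (it is skew in $(t,s)$, not self-adjoint pointwise), and the extra term involving $\Im p_n^{-1}$ that appears in the paper's computation \eqref{GEQ__90_} when $r=s=2n$ has no counterpart in your sketch.
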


Let us notice that $\mathcal{B}^1=\cup_k (a_k,b_k)$, $(a_j,b_j)\cap
(a_k,b_k)=\varnothing$, $k\not= j$ since $\mathcal{B}^1$ is an open
set. In \eqref{GEQ__87_}
$\mathop{P}\limits^{\circ}\int_{\mathcal{B}^1}=\sum_k\lim\limits_{\alpha_k\downarrow
a_k,\beta_k\uparrow
b_k}\mathop{P}\limits^{\circ}\int_{\alpha_k}^{\beta_k}$.  In
\eqref{GEQ__87_}-\eqref{GEQ__89_} we understand
$\int_{\mathcal{B}^1}$ similarly.

\begin{proof}
Let for definiteness $r=s=2n$, $\mathcal{I}$ is infinite (for
another cases the proof becomes simpler). Let the vector-functions
$f\left(t\right),\, g\left(t\right)\in \mathop{H}\limits^{\circ }
,\, \lambda =\mu +i\varepsilon ,G_{\lambda } \left(t,l_{\lambda }
,m\right)$ be defined by  \eqref{GEQ__23_} with
$g\left(t\right)$ instead of $f\left(t\right)$. In view of the
Stieltjes inversion formula, we have
\begin{multline} \label{GEQ__90_}
\left(E_{\alpha ,\beta } f,g\right)_{L_{m}^{2}
\left(\mathcal{I}\right)} =\mathop{\lim }\limits_{\varepsilon
\downarrow 0} \frac{1}{2\pi i} \int _{\alpha }^{\beta
}\left(\left[y_{1} \left(t,\lambda ,f\right)-y_{1}
\left(t,\bar{\lambda },f\right)\right],g\right)_{m} d\mu  =\\
=\mathop{\lim }\limits_{\varepsilon \downarrow 0} \frac{1}{2\pi i}
\int_{\alpha }^{\beta }\bigg[\left(\vec{y}_{1} \left(t,l_{\lambda
} ,m,f\right),G\left(t,l_{\lambda }
,m\right)\right)_{L_{W\left(t,l_{\lambda } ,m\right)}^{2}
\left(\mathcal{I}\right)} -\left(\vec{y}_{1}
\left(t,l_{\bar{\lambda }} ,m,f\right),G\left(t,l_{\bar{\lambda }}
,m\right)\right)_{L_{W\left(t,l_{\bar{\lambda }} ,m\right)}^{2}
\left(\mathcal{I}\right)} +\\
+2i\int _{\mathcal{I}}\left(\left(\Im  {p}_{n}^{ -1}
\left(t,\lambda \right)\right)f^{\left[n\right]}
\left(t\left|m\right. \right),g^{\left[n\right]}
\left(t\left|m\right. \right)\right)dt\bigg]d\mu  =\\
=\mathop{\lim }\limits_{\varepsilon \downarrow 0} \frac{1}{2\pi i}
\int _{\alpha }^{\beta }\left[\left( M\left(\lambda \right) \int
_{\mathcal{I}}X_{\bar{\lambda }}^{*}
\left(t\right)W\left(t,l_{\bar{\lambda }}
,m\right)F\left(t,l_{\bar{\lambda }} ,m\right)dt, \, \int
_{\mathcal{I}}X_{\lambda }^{*} \left(t\right)W\left(t,l_{\lambda }
,m\right)G\left(t,l_{\lambda } ,m\right)dt \right)\right.  -
\\
\left. -\left( M^{*} \left(\lambda \right) \int
_{\mathcal{I}}X_{\lambda }^{*} \left(t\right)W\left(t,l_{\lambda }
,m\right)F\left(t,l_{\lambda } ,m\right)dt ,\, \int
_{\mathcal{I}}X_{\bar{\lambda }}^{*}
\left(t\right)W\left(t,l_{\bar{\lambda} }
,m\right)G\left(t,l_{\bar{\lambda }} ,m\right)dt
\right)\right]d\mu=\\
=\int _{\alpha }^{\beta }\left(d\sigma \left(\mu \right)\int
_{\mathcal{I}}X_{\mu }^{*} \left(t\right)W\left(t,l_{\mu }
\right)F\left(t,l_{\mu } ,m\right)dt ,\, \int _{\mathcal{I}}X_{\mu
}^{*} \left(t\right)W\left(t,l_{\mu } \right)G_{\mu }
\left(t,l_{\mu } ,m\right)dt \right),
\end{multline}
where the second equality is a corollary of formula (40) from \cite{KhrabMAG,KhrabArxiv},
the next to last is a corollary of \eqref{GEQ__62_} and the
last one follows from the well-known generalization of the
Stieltjes inversion formula \cite[p.
803]{Shtraus1}, \cite[p. 952]{Bruk1}. (In the case of
finite $\mathcal{I}$ we have to substitute in \eqref{GEQ__90_}
$M(\lambda)$ by $P M(\lambda) P$ and then when passing to the next
to the last equality in \eqref{GEQ__90_} we have to use the
remark after the proof of Lemma 2.1 from \cite{KhrabMAG,KhrabArxiv}.) But for $\lambda \in
\mathcal{B}$
\begin{equation} \label{GEQ__91_}
\int_{\mathcal{I}}X_{\bar{\lambda }}^{*}
\left(t\right)W\left(t,l_{\bar{\lambda }}
,m\right)F\left(t,l_{\bar{\lambda }} \right)dt =\int
_{\mathcal{I}}\left(\left[X_{\bar\lambda } \left(t\right)\right]_{1}
\right)^{*} m\left[f\right]dt ,
\end{equation}
because in view of Theorem 2.1 from  \cite{KhrabMAG,KhrabArxiv}
\begin{multline*}
\forall h\in \mathcal{H}^{r} :(\int _{\mathcal{I}}X_{\bar{\lambda
}}^{*} \left(t\right)W\left(t,l_{\bar{\lambda }}
,m\right)F\left(t,l_{\bar{\lambda }} \right)dt,h )=\\
=\int _{\mathcal{I}}\left(W\left(t,l_{\bar{\lambda }}
,m\right)F\left(t,l_{\bar{\lambda }} \right),X_{\bar{\lambda }}
\left(t\right)h\right)dt =(\int
_{\mathcal{I}}\left(\left[X_{\bar{\lambda }} \right]_{1}
\right)^{*} m\left[f\right],h)dt.
\end{multline*}
Due to \eqref{GEQ__90_}, \eqref{GEQ__91_},
\eqref{GEQ__85_}
\begin{equation} \label{GEQ__92_}
\left(E_{\alpha ,\beta } f,g\right)_{L_{m}^{2}
\left(\mathcal{I}\right)} =\int _{\alpha }^{\beta }\left(d\sigma
\left(\mu \right)\varphi \left(\mu ,f\right),\varphi \left(\mu
,g\right)\right) .
\end{equation}

The equality \eqref{GEQ__88_} and inequality
\eqref{GEQ__89_} are the corollaries of \eqref{GEQ__92_}.

Representing $\varphi \left(\mu ,g\right)$ in \eqref{GEQ__92_}
by the second variant of \eqref{GEQ__85_}, changing in
\eqref{GEQ__92_} the order of integration and replacing
$\int_{\alpha}^\beta$ by integral sum and using
Theorem 2.1 from  \cite{KhrabMAG,KhrabArxiv} we obtain that
\begin{multline*}
\left(E_{\alpha ,\beta } f,g\right)_{L_{m}^{2}
\left(\mathcal{I}\right)} =(\int _{\alpha }^{\beta }\left[X_{\mu }
\left(t\right)\right]_{1} d\sigma \left(\mu \right)\varphi
\left(\mu ,f\right),g\left(t\right) )_{L_{m}^{2}
\left(\mathcal{I}\right)} =\\
=\int _{\alpha }^{\beta }\left[X_{\mu } \left(t\right)\right]_{1}
d\sigma \left(\mu \right)\varphi \left(\mu
,f\right),g\left(t\right) )_{L_{m}^{2} \left(\mathcal{I}\right)}
\end{multline*}
and \eqref{GEQ__84_} is proved since
$g(t)\in\overset{\circ}{H}$. Equalities \eqref{GEQ__87_} are
the corollary of \eqref{GEQ__84_}, \eqref{star3}. Theorem
\ref{th7} is proved.
\end{proof}

Let us notice that if
$L_m^2(\mathcal{I})=\overset{\circ\ }{L_m^2}(\mathcal{I})$ then
Theorem \ref{th7} is valid without condition \eqref{GEQ__55_}
with $P=I_r$ if $\mathcal{I}$ is infinite.

Formulae (\ref{GEQ__84_}), (\ref{GEQ__87_}), (\ref{GEQ__88_}) are similar to corresponding formulas for scalar differential operators from \cite{Shtraus1} (operator case see \cite{Bruk1}). For such operators the formulas that corresponds to (\ref{GEQ__84_}), (\ref{GEQ__87_}), (\ref{GEQ__88_})  are represented in \cite[p. 251, 255]{DS}, \cite[p. 516]{Naimark} in another form. Let us represent for example inversion formula (\ref{GEQ__87_}),  in the form analogues to \cite{DS,Naimark}.

\begin{proposition}\label{prop21}
Let all conditions of Theorem \ref{th7} hold. Let us represent spectral operator-function $\sigma \left(\mu \right)$ in matrix form: $\sigma \left(\mu \right)=\left\| \sigma _{ij} \left(\mu \right)\right\| _{i,j=1}^{r},\ \sigma _{ij} \left(\mu \right)\in B\left(\mathcal{H}\right)$. Then the following inversion formulae in $L_{m}^{2} \left(\mathcal{I}\right)$
\begin{gather}
\label{prop21_1}
f\left(t\right)=\mathop{P}\limits^{\circ } \int _{\mathcal{B}^1}\sum _{i,j=1}^{r}x_{i} \left(t,\mu \right)d\sigma _{ij} \left(\mu \right)  \int _{\mathcal{I}}x_{j}^{*} \left(s,\mu \right)m\left[f\left(s\right)\right] ds,\\\label{prop21_2}
f\left(t\right)=\mathop{P}\limits^{\circ } \int _{\mathcal{B}^1}\sum _{i,j=1}^{r}x_{i} \left(t,\mu \right) d\sigma _{ij} \left(\mu \right)  \int _{\mathcal{I}}\sum _{k=0}^{s/2}\left(x_{j}^{\left(k\right)} \left(s,\mu \right)\right)^{*}  \mathrm{m}_{k}  \left[f\left(s\right)\right]ds
\end{gather}
are valid if $\mathcal{I}$ is infinite, vector function $f\left(t\right)\in \mathop{H}\limits^{\circ } $ satisfies \eqref{star3}. Let $\mathcal{I}$ is finite. Then $\mathop{P}\limits^{\circ } $ in (\ref{prop21_1})-(\ref{prop21_2}) disappears, and (\ref{prop21_1}) (respectively, (\ref{prop21_2})) is valid for vector functions $f\left(t\right)\in \mathop{H}\limits^{\circ } $ (respectively, $f\left(t\right)\in H$) satisfying (\ref{star3}).
In formulae (\ref{prop21_1}), (\ref{prop21_2}): $x_i(t,\mu)\in B(\mathcal{H})$, 
$(x_1(t,\mu),\dots,x_r(t,\mu))=[X_\mu(t)]_1$, $\mathrm{m}_k[f(s)]$ see \eqref{mk}.
\end{proposition}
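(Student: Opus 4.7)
The plan is to unpack the abstract inversion formula \eqref{GEQ__87_} of Theorem \ref{th7} block by block, using the matrix representation of $[X_\mu(t)]_1$ and of $\sigma(\mu)$. Setting $(x_1(t,\mu),\ldots,x_r(t,\mu))=[X_\mu(t)]_1\in B(\mathcal{H}^r,\mathcal{H})$, $\sigma(\mu)=\|\sigma_{ij}(\mu)\|_{i,j=1}^r$ and $\varphi(\mu,f)=\mathrm{col}\{\varphi_j(\mu,f)\}_{j=1}^r$ with $\varphi_j(\mu,f)\in\mathcal{H}$, the integrand of \eqref{GEQ__87_} becomes
\[
[X_\mu(t)]_1\,d\sigma(\mu)\,\varphi(\mu,f)=\sum_{i,j=1}^r x_i(t,\mu)\,d\sigma_{ij}(\mu)\,\varphi_j(\mu,f).
\]
So the proof reduces to identifying the blocks $\varphi_j(\mu,f)$ for each of the two variants of \eqref{GEQ__85_}.

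For \eqref{prop21_1} I would use the first variant of \eqref{GEQ__85_}, available for $f\in\overset{\circ}{H}$. Since $([X_\mu(s)]_1)^*\in B(\mathcal{H},\mathcal{H}^r)$ is the column with entries $x_j^*(s,\mu)$, one reads off $\varphi_j(\mu,f)=\int_{\mathcal{I}}x_j^*(s,\mu)\,m[f(s)]\,ds$; substitution in the display above yields \eqref{prop21_1} in the infinite case, and in the finite case the projection $\overset{\circ}{P}$ disappears because $\overset{\circ}{L_m^2}(\mathcal{I})=L_m^2(\mathcal{I})$ there (by the remark after Theorem \ref{th7}).

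For \eqref{prop21_2} I would use the second variant $\varphi(\mu,f)=\int_{\mathcal{I}}X_\mu^*(s)W(s,l_\mu,m)F(s,l_\mu,m)\,ds$ from \eqref{GEQ__85_}/\eqref{GEQ__91_}, which is legitimate on all of $H$ when $\mathcal{I}$ is finite and coincides with the first variant on $\overset{\circ}{H}$ by \eqref{GEQ__91_}. The key step is to verify the componentwise identity
\[
\bigl(X_\mu^*(s)\,W(s,l_\mu,m)\,F(s,l_\mu,m)\bigr)_j=\sum_{k=0}^{s/2}\bigl(x_j^{(k)}(s,\mu)\bigr)^*\mathrm{m}_k[f(s)],\qquad 1\le j\le r,
\]
where $(\cdot)_j$ extracts the $j$-th $\mathcal{H}$-block. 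Once this is established, substitution into the block expansion above produces \eqref{prop21_2}.

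This last identity is the main obstacle, and I would prove it in either of two equivalent ways. The direct way is a block-matrix calculation: write out $W$ via \eqref{GEQ__8_}/\eqref{GEQ__16_}, $F$ via \eqref{GEQ__23_}, use Theorem \ref{th1} to express the columns of $X_\mu(s)$ as combinations of $x_j(s,\mu)$ and their ordinary/quasi-derivatives, and match blocks with the definition \eqref{mk} of $\mathrm{m}_k$. The more economical way is to compare the two representations of the resolvent: Theorem \ref{th4} gives $R(\lambda)f=y_1(t,\lambda,f)$ from \eqref{GEQ__62_}, while Proposition \ref{rm31} provides \eqref{prop12formula}, which already contains the quantities $\sum_k(x_j^{(k)})^*\mathrm{m}_k[f]$; equality of the two representations for all $f$ forces the pointwise-in-$s$ identification above. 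Once the identity is in hand, the remaining assertions of Proposition \ref{prop21} (removal of $\overset{\circ}{P}$ for finite $\mathcal{I}$, and the distinction between $f\in\overset{\circ}{H}$ and $f\in H$ in \eqref{prop21_1} versus \eqref{prop21_2}) follow by direct inspection of Theorem \ref{th7} and \eqref{star3}.
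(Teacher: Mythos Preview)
Your approach is correct and matches the paper's own proof, which simply says the argument is carried out as in the proof of \eqref{GEQ__87_} together with the proof of Remark~3.1 from \cite{KhrabMAG} (i.e.\ Proposition~\ref{rm31} here). Your ``more economical'' route via Proposition~\ref{rm31} is exactly what the paper intends; the block identity you isolate is precisely the content needed. One small correction: your justification for dropping $\overset{\circ}{P}$ in the finite case is misattributed---the remark after Theorem~\ref{th7} does not assert $\overset{\circ}{L_m^2}(\mathcal{I})=L_m^2(\mathcal{I})$ for finite $\mathcal{I}$; rather, $\overset{\circ}{P}$ is already absent in the finite-interval line of \eqref{GEQ__87_} itself, so no additional argument is required.
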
 

The proof of this proposition is carried out in the same way as the proof of (\ref{GEQ__87_}) taking into account the proof of Remark 3.1 from \cite{KhrabMAG}.

Further we present several statements which allow to check the fulfilment of conditions (\ref{star3}) of Theorem \ref{th7} in various situations.

It is known (see for example \cite{Khrab2,Khrab3} or \cite[Ex. 3.2]{KhrabMAG,KhrabArxiv})
that even in the case $n_{\lambda } \left[y\right]\equiv 0$ in
\eqref{GEQ__1_}, \eqref{GEQ__2_} there is such $E_{\mu} $
satisfying \eqref{GEQ__3_}, \eqref{GEQ__62_}-\eqref{GEQ__66_}, \eqref{GEQ__83_} that ${E} _{\infty }
\ne \mathbf{I}$.

On the other hand if $n_\lambda[y]\equiv 0$ then $R(\lambda)$ is a generalized resolvent of relation $\mathcal{L}_0$ and  $\forall f\in
D\left(\mathcal{L}_{0} \right){E} _{\infty } f=f$ in view of
\cite{Khrab1,Khrab3}. Here $\mathcal{L}_{0}$ is the minimal relation generated in $L^2_m(\mathcal{I})$ by the pair of expressions $l[y]$ and $m[y]$; in particular $\mathcal{L}_0\supset \{\{y(t),f(t)\}:\ y(t)\in C_0^r(\mathcal{I}),f(t)\in \overset{\, \circ}H,l[y]=m[f]\}$  (see \cite{Khrab6,KhrabArxiv,KhrabMAG}).

Let expression $n_{\lambda } $ in representation
\eqref{GEQ__2_}, \eqref{GEQ__561_} have a divergent form
with coefficients $\tilde{\tilde{p}}_{j} =\tilde{\tilde{p}}_{j}
\left(t,\lambda \right),\tilde{\tilde{q}}_{j}
=\tilde{\tilde{q}}_{j} \left(t,\lambda
\right),\tilde{\tilde{s}}_{j} =\tilde{\tilde{s}}_{j}
\left(t,\lambda \right)$.

We denote $m\left(t\right)$ three-diagonal $\left(n+1\right)\times
\left(n+1\right)$ operator matrix, whose elements under main
diagonal are equal to $\left(-\frac{i}{2} \tilde{q}_{1} ,\, \ldots
,\, -\frac{i}{2} \tilde{q}_{n} \right)$, the elements over the
main diagonal are equal to $\left(\frac{i}{2} \tilde{s}_{1} ,\,
\ldots ,\, \frac{i}{2} \tilde{s}_{n} \right)$, the elements on the
main diagonal are equal to $\left(\tilde{p}_{0} ,\, \ldots ,\,
\tilde{p}_{n} \right)$, where $\tilde{p}_{j} ,\tilde{q}_{j}
,\tilde{s}_{j} =\tilde{q}_{j}^{*} $ are the coefficients of
expressions $m$. (Here either $2n$ or $2n+1$ is equal to the order
$r$ of $l_\lambda$). If order of $n_{\lambda } $ is less or equal
to $2n$, we denote $n\left(t,\lambda \right)$ the analogues
$\left(n+1\right)\times \left(n+1\right)$ operator matrix with
$\tilde{\tilde{p}}_{j} ,\tilde{\tilde{q}}_{j}
,\tilde{\tilde{s}}_{j} $ instead of $\tilde{p}_{j} ,\tilde{q}_{j}
,\tilde{s}_{j} $. If order $m$ or order $n_{\lambda } $ is less
than $2n$, we set the correspondent elements of $m\left(t\right)$
or $n\left(t,\lambda \right)$ be equal to zero.

\begin{theorem}\label{th8}
Let in \eqref{GEQ__1_}, \eqref{GEQ__2_} the order of the
expression $n_{\lambda }[y] $ is less or equal to the order of the
expression $(l-\lambda m)[y]$ (and therefore in view of
(\ref{GEQ__561_}) the order of $l-\lambda m$ is equal to $r$;
so $Q\left(t,l_{\lambda } \right)=Q\left(t,l-\lambda m\right)$).
Let $y=R_{\lambda } f,\, f\in H$ be the generalized resolvent of
the relation $\mathcal{L}_{0} $ and $y$ satisfy equation
\eqref{GEQ__1_}. Let $y_{1} =R\left(\lambda \right)f,\, f\in
H$ be the operator \eqref{GEQ__62_}, \eqref{GEQ__63_} from
Theorem \ref{th4}.

Let the following conditions hold for $\tau >0$ large enough:

1$^\circ$.

{\small
\begin{multline} \label{GEQ__94_}
\left.\mathop{\lim }\limits_{\alpha \downarrow a,\, \beta \uparrow
b} \frac{(\Re Q\left(t,l_{\lambda } \right)\left(\vec{y}_{1}
\left(t,l_{\lambda } ,m,f\right)-\vec{y}\left(t,l-\lambda
m,m,f\right)),\left(\vec{y}_{1} \left(t,l_{\lambda }
,m,f\right)\right)-\vec{y}\left(t,l-\lambda
m,m,f\right)\right)}{\Im \lambda }\right|_\alpha^\beta\le\\\le 0,\
\lambda =i\tau
\end{multline}
}

2$^\circ$.
\begin{equation} \label{GEQ__95_}
\Im{n}\left({\rm t,}\lambda \right)\le c\left(t,\tau
\right)m\left(t\right),\, t\in \bar{\mathcal{I}},\, \, \lambda
=i\tau ,
\end{equation}
where the scalar function $c\left(t,\tau \right)$ satisfles the
following condition:
\begin{equation} \label{GEQ__96_}
\mathop{\sup }\limits_{t\in \bar{\mathcal{I}}} c\left(t,\tau
\right)=o\left(\tau \right),\, \, \, \tau \to +\infty .
\end{equation}

Then for generalized spectral family ${E}_{\mu } $
\eqref{GEQ__83_} corresponding by \eqref{GEQ__3_} to the
resolvent $R\left(\lambda \right)$
\eqref{GEQ__62_}-\eqref{GEQ__63_} from Theorem \ref{th4}
and for generalized spectral family $\mathcal{E}_{\mu } $
corresponding to the generalized resolvent $R_{\lambda } $ one has
${E} _{\infty } =\mathcal{E} _{\infty } $.

\end{theorem}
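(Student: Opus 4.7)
My plan is to prove the equality $E_\infty=\mathcal{E}_\infty$ by reducing it to a Stieltjes-inversion estimate on the resolvent difference $R(i\tau)f-R_{i\tau}f$, and then bounding that difference via Green's identity applied to the homogeneous equation $l_{i\tau}[z]=0$, with hypothesis $1^\circ$ killing the boundary contribution and hypothesis $2^\circ$ dispatching a quasi-derivative mismatch that arises when the boundary concomitant is matched with the expression appearing in $1^\circ$.

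\textbf{Step 1 (Stieltjes reduction).} Both $R(\lambda)$ (from Theorem \ref{th4}) and $R_\lambda$ admit Nevanlinna representations of the form \eqref{GEQ__3_} with spectral families $E_\mu$, $\mathcal{E}_\mu$ satisfying \eqref{GEQ__83_}. A dominated-convergence argument on the scalar kernel $-i\tau/(\mu-i\tau)$ yields $s\text{-}\lim_{\tau\to+\infty}(-i\tau)R(i\tau)f=E_\infty f$ (and likewise for $\mathcal{E}_\infty$). Therefore it suffices to show $\tau\,\|R(i\tau)f-R_{i\tau}f\|_{L^2_m(\mathcal{I})}\to 0$ for every $f$ in the dense subspace $H\subseteq L^2_m(\mathcal{I})$.

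\textbf{Step 2 (Energy identity).} Fix $f\in H$, set $z_\tau=R(i\tau)f-R_{i\tau}f$. By hypothesis both $R(i\tau)f$ and $R_{i\tau}f$ solve \eqref{GEQ__1_} with $\lambda=i\tau$, so $l_{i\tau}[z_\tau]=0$. The Lagrange identity for the divergence-form expression \eqref{GEQ__51+_} on $[\alpha,\beta]\subset\mathcal{I}$, after taking imaginary parts and using $\Im l_{i\tau}=-\tau m-\Im n_{i\tau}$, produces
\begin{gather*}
\tau\,m[z_\tau,z_\tau]_{\alpha,\beta}+\int_\alpha^\beta \Im n_{i\tau}\{z_\tau,z_\tau\}\,dt=\Psi_\tau(\alpha,\beta),
\end{gather*}
where $\Psi_\tau(\alpha,\beta)$ is the imaginary part of the boundary concomitant. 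Passing to the first-order system of Theorem \ref{th1} (with $\mathrm{ord}\,n_\lambda\le r$ so $Q(t,l_\lambda)=Q(t,l-\lambda m)$), one has $\Psi_\tau(\alpha,\beta)=(\Re Q\,\vec{z}_\tau,\vec{z}_\tau)|_\alpha^\beta$ where $\vec{z}_\tau$ is the first-order lift of $z_\tau$ built from $l_{i\tau}$-quasi-derivatives.

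\textbf{Step 3 (Matching with hypothesis $1^\circ$, conclusion).} The lift $\vec{z}_\tau$ uses $l_{i\tau}$-quasi-derivatives whereas hypothesis $1^\circ$ is formulated via the mixed pair $\vec{y}_1(t,l_\lambda,m,f)-\vec{y}(t,l-\lambda m,m,f)$, whose two components employ different quasi-derivative conventions. Decomposing
\begin{gather*}
\vec{z}_\tau=\bigl(\vec{y}_1(t,l_{i\tau},m,f)-\vec{y}(t,l-i\tau m,m,f)\bigr)+\Delta_\tau,
\end{gather*}
the correction $\Delta_\tau$ is determined solely by the coefficients of $n_{i\tau}$ through the quasi-derivative formulae \eqref{GEQ__10_}--\eqref{GEQ__12_}. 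Hypothesis $2^\circ$ — $\Im n(t,i\tau)\le c(t,\tau)m(t)$ with $\sup_t c(t,\tau)=o(\tau)$ — is tailored to bound the cross- and quadratic $\Delta_\tau$-terms in $(\Re Q\,\vec{z}_\tau,\vec{z}_\tau)|_\alpha^\beta$ by $\varepsilon(\tau)\,m[z_\tau,z_\tau]_{\alpha,\beta}$ with $\varepsilon(\tau)=o(\tau)$. Letting $(\alpha,\beta)\uparrow\mathcal{I}$ (both left-hand-side terms are monotone, so limits exist) and using hypothesis $1^\circ$ to force $\lim\Psi_\tau^{\text{mixed}}\le 0$, together with $\Im n_{i\tau}\{z_\tau,z_\tau\}\ge 0$ from \eqref{GEQ__561_}, yields $(\tau-\varepsilon(\tau))\,m[z_\tau,z_\tau]\le 0$. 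For $\tau$ large, this forces $m[z_\tau,z_\tau]=0$, i.e., $R(i\tau)f=R_{i\tau}f$ in $L^2_m(\mathcal{I})$, which a fortiori delivers $\tau\|z_\tau\|_{L^2_m}\to 0$ and completes Step 1.

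\textbf{Main obstacle.} The central difficulty is the matching in Step 3. The boundary concomitant produced by the Green identity is the single-expression quadratic form $(\Re Q\,\vec{z}_\tau,\vec{z}_\tau)|_\alpha^\beta$, while hypothesis $1^\circ$ controls a different quadratic form in which the two lifts of $R(i\tau)f$ and $R_{i\tau}f$ are taken with respect to different expressions ($l_\lambda$ versus $l-\lambda m$). Bridging these requires careful bookkeeping of the $n_{i\tau}$-contributions to the quasi-derivatives \eqref{GEQ__10_}--\eqref{GEQ__12_}, and hypothesis $2^\circ$ enters precisely at this stage to make the correction $\Delta_\tau$ asymptotically negligible against the $\tau m$-scale appearing in the energy identity.
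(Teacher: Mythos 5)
Your Step 1 is sound and is essentially the paper's Lemma \ref{lm10+}: since $\|\tau R(i\tau)\|\le 1$ and $\|\tau R_{i\tau}\|\le 1$, it suffices to show $\tau\|R(i\tau)f-R_{i\tau}f\|_{m}\to 0$ on the dense set $H$. The argument breaks in Step 2. The operator $R_\lambda$ is a generalized resolvent of $\mathcal{L}_0$, the minimal relation generated by the pair $l[y]$, $m[y]$; hence $y=R_{i\tau}f$ solves $(l-i\tau m)[y]=m[f]$ and \emph{not} $l_{i\tau}[y]=m[f]$ (this is clear from hypothesis $1^\circ$, which uses the lift $\vec y(t,l-\lambda m,m,f)$, and from Proposition \ref{prop2}). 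Consequently $z_\tau=y_1-y$ satisfies the \emph{inhomogeneous} equation $(l-i\tau m)[z_\tau]=n_{i\tau}[y_1]$ (equation \eqref{GEQ__97_} of the paper), not $l_{i\tau}[z_\tau]=0$, and your energy identity is not what Green's formula yields. A telling symptom is your conclusion that $R(i\tau)f=R_{i\tau}f$ for all large $\tau$: by analyticity this would force $R(\lambda)\equiv R_\lambda$ and hence $E_\mu\equiv\mathcal{E}_\mu$ for all $\mu$, which is far stronger than $E_\infty=\mathcal{E}_\infty$ and is false in general --- Examples \ref{ex3}--\ref{ex5} exist precisely because the nonlinear perturbation changes the spectrum, so the two resolvents cannot coincide.

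You also misplace the role of hypothesis $2^\circ$. There is no quasi-derivative mismatch to repair: by Lemma 1.2 of \cite{KhrabMAG} and \eqref{GEQ__25_} one has exactly $\vec z_\tau(t,l-\lambda m,n_\lambda,y_1)=\vec y_1(t,l_\lambda,m,f)-\vec y(t,l-\lambda m,m,f)$, so the boundary concomitant delivered by Green's formula is literally the quantity controlled by $1^\circ$ and your correction $\Delta_\tau$ vanishes identically. What $2^\circ$ actually controls is the inhomogeneity: after $1^\circ$ removes the boundary term, the imaginary part of Green's formula leaves the cross term $\int_{\mathcal I}\Im\left(n_{i\tau}\{y_1,z_\tau\}\right)dt$ (see \eqref{GEQ__98_}), which the Cauchy-type inequality for dissipative operators together with \eqref{GEQ__95_}, \eqref{GEQ__96_} bounds by $o(1)\left(m\{z_\tau,z_\tau\}\right)^{1/2}\left(m\{y_1,y_1\}\right)^{1/2}$ pointwise. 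This gives $\|z_\tau\|_m\le o(1)\|y_1\|_m\le o(1/\tau)\|f\|_m$, using $\|R(i\tau)f\|_m\le\|f\|_m/\tau$ from \eqref{GEQ__66_} --- small, not zero --- which is exactly what your Step 1 requires. Without rewriting Step 2 around the correct (inhomogeneous) equation for $z_\tau$, the proof does not go through.
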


Let us notice that in view of \eqref{GEQ__95_} the coefficient
at the highest derivative in the expression $l-\lambda m$ has
inverse from $B\left(\mathcal{H}\right)$ if $t\in
\bar{\mathcal{I}}$, $\Im \lambda \ne 0$.

\begin{proof}
Let $f\left(t\right)\in H$, $y_{1} =R\left(\lambda \right)f$,
$y=R_{\lambda } f$. Then $z=y_{1} -y$ satisfies the following
equation
\begin{equation} \label{GEQ__97_}
l\left[z\right]-\lambda m\left[z\right]=n_{\lambda } \left[y_{1} \right].
\end{equation}

Applying to the equation \eqref{GEQ__97_} the Green formula
\cite[Theorem 1.3]{KhrabMAG,KhrabArxiv} one has
$$
\left.\int_{\alpha }^{\beta }\Im \left(n_{\lambda } \left\{y_{1}
,z\right\}\right)dt +\int _{\alpha }^{\beta }m\left\{z,z\right\}dt
=\frac{1}{2} \frac{\Re \left(Q\left(t,l_{\lambda }
\right)\vec{z},\vec{z}\right)}{\Im \lambda } \right|_\alpha^\beta
,
$$
where $\vec{z}=\vec{z}\left(t,l-\lambda m,n_{\lambda } ,y_{1}
\right)=\vec{y}_1\left(t,l_{\lambda }
,m,f\right)-\vec{y}\left(t,l-\lambda m,m,f\right)$ in view Lemma 1.2 from \cite{KhrabMAG,KhrabArxiv} and of
\eqref{GEQ__25_}. Hence for $\tau >0$
large enough
\begin{multline} \label{GEQ__98_}
m\left[z,z\right]\le -\int_{\mathcal{I}}\Im\left(n_{\lambda }
\left[y_{1} ,z\right]\right)dt/\tau
 \leq\\
 \leq{\int _{\mathcal{I}}\left|\left({ n}\left({\rm t,}\lambda
\right)col\left\{y_{1} ,y'_{1} ,\ldots ,y_{1}^{\left(n\right)}
\right\},col\left\{z,z',\ldots ,z^{\left(n\right)}
\right\}\right)\right| dt \mathord{\left/{\vphantom{\int
_{\mathcal{I}}\left|\left({\rm n}\left({\rm t,}\lambda
\right)col\left\{y_{1} ,y'_{1} ,\ldots ,y_{1}^{\left(n\right)}
\right\},col\left\{z,z',\ldots ,z^{\left(n\right)}
\right\}\right)\right| dt \tau }}\right.\kern-\nulldelimiterspace}
\tau } ,\quad \lambda =i\tau
\end{multline}
in view of \eqref{GEQ__94_}. But due to the inequality of the
Cauchy type for dissipative operators \cite[p. 199]{Nagy} and
\eqref{GEQ__95_}, \eqref{GEQ__96_}: subintegral function
in the last integral in \eqref{GEQ__98_} is less or equal to
$\left(m\left\{z,z\right\}\right)^{1/2}
\left(m\left\{y_1,y_1\right\}\right)^{1/2} o\left(1\right)$ with
$\lambda =i\tau ,\, \tau \to +\infty $. Therefore $\left\|
z\right\|_m \le o\left({1/\tau}\right)\left\| f\right\| _m $ since
$\left\| R_{\lambda } \right\| \le {1 \mathord{\left/{\vphantom{1
\left|\Im \lambda \right|}}\right.\kern-\nulldelimiterspace}
\left|\Im \lambda \right|} $. Hence
$$\left\| R\left(\lambda
\right)-R_{\lambda } \right\|\le o\left({1
\mathord{\left/{\vphantom{1 \tau
}}\right.\kern-\nulldelimiterspace} \tau } \right),\quad \lambda
=i\tau ,\quad \tau \to +\infty$$ To complete the proof of the
theorem it remains to prove the following
\begin{lemma}\label{lm10+}
Let $R_{k} \left(\lambda \right)=\int
_{\mathbb{R}^1}\frac{dE^k_{\mu } }{\mu -\lambda }  ,\, k=1,2$,
where $E_{\mu }^{k} $ are the generalized spectral families the
type \eqref{GEQ__83_} in Hilbert space $\mathbf{H}$. If
$\left\| R_{1} \left(\lambda \right)-R_{2} \left(\lambda
\right)\right\| \le o\left({1 \mathord{\left/{\vphantom{1 \tau
}}\right.\kern-\nulldelimiterspace} \tau } \right),\, \, \,
\lambda =i\tau ,\, \, \tau \to +\infty $, then $E_{\infty }^{1}
=E_{\infty }^{2} $.
\end{lemma}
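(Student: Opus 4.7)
The plan is to extract $E_\infty^k$ from $R_k(\lambda)$ by taking a suitable limit along the imaginary axis, and then use the hypothesized decay to cancel the two terms.

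First I would show the following key identity: for any generalized spectral family $E_\mu$ of type \eqref{GEQ__83_} with Stieltjes transform $R(\lambda)=\int_{\mathbb{R}^1}dE_\mu/(\mu-\lambda)$, one has
\begin{equation*}
s\text{-}\lim_{\tau\to+\infty}\, i\tau\, R(i\tau)=-E_\infty.
\end{equation*}
Indeed, for every $h\in\mathbf{H}$, $(R(i\tau)h,h)=\int_{\mathbb{R}^1}(\mu-i\tau)^{-1}d(E_\mu h,h)$, so
$$i\tau(R(i\tau)h,h)=\int_{\mathbb{R}^1}\frac{i\tau}{\mu-i\tau}\,d(E_\mu h,h).$$
The integrand is bounded in modulus by $1$ and tends pointwise in $\mu$ to $-1$ as $\tau\to+\infty$, while $d(E_\mu h,h)$ is a finite positive measure with total mass $(E_\infty h,h)$. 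Dominated convergence gives weak convergence $i\tau R(i\tau)\to -E_\infty$, and a polarization argument together with the bound $\|R(i\tau)\|\le 1/\tau$ upgrades this to strong convergence (since the family $i\tau R(i\tau)$ is uniformly norm bounded and its quadratic form converges).

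Applying this to each $R_k$, I obtain $s\text{-}\lim_{\tau\to+\infty} i\tau R_k(i\tau)=-E_\infty^k$ for $k=1,2$. By the assumption,
$$\|i\tau R_1(i\tau)-i\tau R_2(i\tau)\|\le\tau\cdot o(1/\tau)=o(1),\quad \tau\to+\infty,$$
so $i\tau R_1(i\tau)-i\tau R_2(i\tau)\to 0$ in the uniform, hence the strong, operator topology. Combining,
$$-E_\infty^1=s\text{-}\lim_{\tau\to+\infty} i\tau R_1(i\tau)=s\text{-}\lim_{\tau\to+\infty} i\tau R_2(i\tau)=-E_\infty^2,$$
which is the claim.

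The only delicate point is the passage from weak to strong convergence in the identity $s\text{-}\lim i\tau R(i\tau)=-E_\infty$; however this is routine because $E_\infty\le\mathbf{I}$ and the operators $i\tau R(i\tau)$ are uniformly norm bounded, so convergence of $(i\tau R(i\tau)h,h)\to-(E_\infty h,h)$ for every $h$ implies strong convergence on the dense domain where the limit is defined, and uniform boundedness extends it to all of $\mathbf{H}$. No hypothesis on the common value of $E_\infty^1,E_\infty^2$ enters until the final cancellation, so that step is where the assumed rate $o(1/\tau)$ is indispensable—anything slower would fail to kill the factor $\tau$ and leave an uncontrolled residual.
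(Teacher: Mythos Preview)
Your argument is correct and follows essentially the same idea as the paper: both recover $E_\infty$ from the asymptotics of $-i\tau R(i\tau)$ as $\tau\to+\infty$ and then use the hypothesis to cancel. The paper works with the signed measure $d\sigma(\mu)=d\big((E_\mu^1-E_\mu^2)f,f\big)$ and does a manual $\varepsilon$--splitting of the integral over a finite interval $\Delta$ and its complement, whereas you apply dominated convergence to each positive measure $d(E_\mu^k h,h)$ separately; your route is a bit cleaner.

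One remark: your digression about upgrading weak to strong convergence is unnecessary and the justification you give is not quite right (uniform boundedness plus convergence of quadratic forms yields only weak operator convergence, not strong). Fortunately the proof needs nothing more than the weak statement $(i\tau R_k(i\tau)h,h)\to-(E_\infty^k h,h)$ for every $h$: combined with $\|i\tau R_1(i\tau)-i\tau R_2(i\tau)\|\to 0$ this already gives $(E_\infty^1 h,h)=(E_\infty^2 h,h)$ for all $h$, hence $E_\infty^1=E_\infty^2$ by self-adjointness. You can simply delete the upgrade paragraph.
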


\begin{proof}
Let $f\in $\textbf{H} $\, \sigma \left(\mu
\right)=\left(\left(E_{\mu }^{1} -E_{\mu }^{2} \right)f,f\right)$.
One has
\begin{multline*}
\left|\left(\left[R_{1} \left(\lambda \right)-R_{2} \left(\lambda
\right)\right]f,f\right)\right|=\\=\frac{1}{\tau } \left|-\int
_{\Delta }d\sigma \left(\mu \right) +\int _{\Delta }\frac{\mu
d\sigma \left(\mu \right)}{\mu -\lambda }  +\int _{R^{1}
\backslash \Delta }\frac{\lambda d\sigma \left(\mu \right)}{\mu
-\lambda }  \right|\le o\left({1 \mathord{\left/{\vphantom{1 \tau
}}\right.\kern-\nulldelimiterspace} \tau } \right)\left\|
f\right\| ^{2} ,\, \, \lambda =i\tau ,\, \, \tau \to +\infty
\end{multline*}
Therefore
\begin{equation} \label{GEQ__99_}
\left|-\int _{\Delta }d\sigma \left(\mu \right)+\int _{\Delta
}\frac{\mu d\sigma \left(\mu \right)}{\mu -\lambda }  +\int
_{R^{1} \backslash \Delta }\frac{\lambda d\sigma \left(\mu
\right)}{\mu -\lambda }   \right|\le o\left(1\right),\, \, \,
\lambda =i\tau ,\, \, \tau \to +\infty .
\end{equation}
For an arbitrarily small $\varepsilon >0$ we choose such finite
interval $\Delta \left(\varepsilon \right)$ that for any finite
interval $\Delta \supseteq \Delta \left(\varepsilon
\right):\left|\int _{R^{1} \backslash \Delta }\frac{\lambda
d\sigma \left(\mu \right)}{\mu -\lambda }
\right|<\frac{\varepsilon }{2} ,\, \, \lambda =i\tau $. But for
any finite interval $\Delta \supseteq \Delta \left(\varepsilon
\right)\, \exists N=N\left(\Delta \right):\forall \tau
>N:\left|\int _{\Delta }\frac{\mu d\sigma \left(\mu \right)}{\mu
-\lambda }  \right|<\frac{\varepsilon }{2} ,\, \, \lambda =i\tau
$. Therefore $\forall \varepsilon >0,\Delta \supseteq \Delta
\left(\varepsilon \right):\left|\int _{\Delta }d\sigma \left(\mu
\right) \right|<\varepsilon $ in view of \eqref{GEQ__99_}.
Hence $\forall f\in $\textbf{H}$:\left(E_{\infty }^{1}
f,f\right)=\left(E_{\infty }^{2} f,f\right)\Rightarrow E_{\infty
}^{1} =E_{\infty }^{2} $.

Lemma \ref{lm10+} and Theorem \ref{th8} are proved.
\end{proof}
\end{proof}

\begin{corollary}\label{cor3}
Let the conditions of Theorems \ref{th7}, \ref{th8} hold. Then for
generalized spectral family $E_{\mu } $ from Theorem \ref{th7}
$\forall f\left(t\right)\in D\left(\mathcal{L}_{0}
\right):E_{\infty } f=f$.
\end{corollary}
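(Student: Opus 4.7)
The plan is to read off Corollary \ref{cor3} as an immediate combination of Theorem \ref{th8} with the classical property of generalized resolvents of a minimal relation that was recalled in the paragraph preceding Theorem \ref{th8}. No new estimates are needed; the analytic work has already been done inside Theorem \ref{th8}.

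Concretely, I would proceed in two steps. First, I apply Theorem \ref{th8}: under its hypotheses (which are assumed by the corollary) the generalized spectral family $E_\mu$ of the resolvent $R(\lambda)$ built from the characteristic operator and the generalized spectral family $\mathcal{E}_\mu$ of the generalized resolvent $R_\lambda$ of $\mathcal{L}_0$ share the same limit at infinity, i.e. $E_\infty=\mathcal{E}_\infty$ as bounded operators in $L_m^2(\mathcal{I})$. Second, I invoke the cited fact from \cite{Khrab1,Khrab3} (reproduced in the excerpt just before Theorem \ref{th8}) which says that if $R_\lambda$ is a generalized resolvent of the minimal relation $\mathcal{L}_0$, then $\mathcal{E}_\infty f=f$ for every $f\in D(\mathcal{L}_0)$. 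This is a standard property of generalized resolvents of symmetric relations.

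Composing the two statements yields, for any $f\in D(\mathcal{L}_0)$,
\[
E_\infty f=\mathcal{E}_\infty f=f,
\]
which is exactly the assertion of Corollary \ref{cor3}. There is no real obstacle here: the only thing one has to observe is that the hypothesis of Theorem \ref{th7} (which furnishes the family $E_\mu$ and in particular the relation \eqref{GEQ__3_}) together with the hypothesis of Theorem \ref{th8} (which furnishes the comparison between $R(\lambda)$ and $R_\lambda$ through the smallness of $\|R(\lambda)-R_\lambda\|$ as $\lambda=i\tau\to i\infty$, via \eqref{GEQ__94_}--\eqref{GEQ__96_}) are compatible, so that both $E_\infty$ and $\mathcal{E}_\infty$ are simultaneously defined on the same space $L_m^2(\mathcal{I})$. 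Since both facts are given by hypothesis, the corollary follows in one line.
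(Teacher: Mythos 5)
Your proposal is correct and is exactly the argument the paper intends: the corollary is stated without proof as an immediate consequence of Theorem \ref{th8} (which gives $E_\infty=\mathcal{E}_\infty$) combined with the fact, recalled from \cite{Khrab1,Khrab3} in the paragraph preceding Theorem \ref{th8}, that $\mathcal{E}_\infty f=f$ for every $f\in D(\mathcal{L}_0)$ since $R_\lambda$ is a generalized resolvent of the minimal relation $\mathcal{L}_0$.
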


\begin{remark}
\label{rem3} If $L_{m}^{2}
\left(\mathcal{I}\right)=\mathop{L_{m}^{2} }\limits^{\circ }
\left(\mathcal{I}\right)$, then it is sufficient to verify
condition \eqref{GEQ__94_} in Theorem \ref{th8} for $f\in
\mathop{H}\limits^{\circ } $.
\end{remark}

\begin{proposition}
\label{prop2} Let the order of expression $n_{\lambda } $ be less
or equal to the order of expression $l-\lambda m$ and the
coefficient of $l-\lambda m$ at the highest derivative has the
inverse from $B\left(\mathcal{H}\right)$ for $t\in
\bar{\mathcal{I}},\, \lambda \in \mathcal{B} \left(l-\lambda
{m}\right)$, where $\mathcal{B}\left(l-\lambda m\right)$ is an
analogue of the set $\mathcal{B} =\mathcal{B}\left(l_\lambda
\right)$. Let interval $\mathcal{I}$ be finite and for equation
\eqref{GEQ__1_}, \eqref{GEQ__2_} with $n_{\lambda }
\left[y\right]\equiv 0$ condition \eqref{GEQ__55_} holds with
${P} ={I} _{r}$. 

Then for equation \eqref{GEQ__1_}-\eqref{GEQ__2_} this condition also holds with $P=I_r$. 

The boundary value problem which is obtained by adding to equation \eqref{GEQ__1_}-\eqref{GEQ__2_} with $n_\lambda[y]\equiv 0$ (respectively, \eqref{GEQ__1_}-\eqref{GEQ__2_}) boundary conditions 
\begin{gather}\label{bk1}
\exists h=h\left(\lambda ,f\right)\in \mathcal{H}^{r} :\, \,
\vec{y}\left(a,l_\lambda-\lambda m ,m,f\right)=\mathcal{M}_{\lambda } h,\, \,
\, \vec{y}\left(b,l_\lambda-\lambda m ,m,f\right)=\mathcal{N}_{\lambda }
h\\
\label{bk2}
\text{(respectively, }\exists h_1=h_1\left(\lambda ,f\right)\in \mathcal{H}^{r} :\, \,
\vec{y}\left(a,l_\lambda ,m,f\right)=\mathcal{M}_{\lambda } h_1,\, \,
\, \vec{y}\left(b,l_\lambda ,m,f\right)=\mathcal{N}_{\lambda }
h_1\text{)},\end{gather}
has the unique solutions $y=R_{\lambda } f$ (respectively, $y_{1} =R\left(\lambda \right)f$) for an arbitrary $f(t)\in H$.
Here the operator-functions $\mathcal{M}_{\lambda },
\mathcal{N}_{\lambda} \in B\left(\mathcal{H}^{r} \right)$ depend
analytically on the non-real $\lambda $,
\begin{equation*}
\mathcal{M}_{\bar{\lambda }}^{*} \left[\Re Q\left(a,l_{\lambda }
\right)\right]\mathcal{M}_{\lambda } =\mathcal{N}_{\bar{\lambda
}}^{*} \left[\Re Q\left(b,l_{\lambda }
\right)\right]\mathcal{N}_{\lambda },\ \Im  \lambda \ne
0,
\end{equation*}
where $Q\left(t,l_{\lambda } \right)$ is the coefficient of
equation \eqref{GEQ__54_} corresponding by Theorem \ref{th1}
to equation \eqref{GEQ__1_},
\begin{equation*}
\left\| \mathcal{M}_{\lambda } h\right\| +\left\|
\mathcal{N}_{\lambda } h\right\| >0,\ 0\ne h\in
\mathcal{H}^{r} ,\, \Im \lambda \ne 0,
\end{equation*}
the lineal $\left\{\mathcal{M}_{\lambda } h\oplus
\mathcal{N}_{\lambda } h\left|h\in \mathcal{H}^{r} \right.
\right\}\subset \mathcal{H}^{2r} $ is a maximal
$\mathcal{Q}$-nonnegative subspace if $\Im  \lambda \ne 0$, where
$\mathcal{Q}=\left(\Im \lambda \right)\mathrm{diag} \left(\Re
Q\left(a,l_{\lambda } \right),\, -\Re Q\left(b,l_{\lambda }
\right)\right)$ (and therefore
\begin{equation*}
\Im  \lambda \left(\mathcal{N}_{\lambda }^{*} \left[\Re
Q\left(b,l_{\lambda } \right)\right]\mathcal{N}_{\lambda }
-\mathcal{M}_{\lambda }^{*} \left[\Re Q\left(a,l_{\lambda }
\right)\right]\mathcal{M}_{\lambda } \right)\le 0,\ \left.
\Im  \lambda \ne 0\right).
\end{equation*}

Operator $R_\lambda f$ (respectively, $R(\lambda)f$) is a generalized resolvent of 
$\mathcal{L}_0$ (respectively, a resolvent of \eqref{GEQ__63_} type). This resolvent is constructed by applying  of Theorem \ref{th4} with the characteristic operator
\begin{gather}\label{mlambda+}
M(\lambda)=-{1\over 2}\left(X_\lambda^{-1}(a)\mathcal{M}_\lambda+X_\lambda^{-1}(b)\mathcal{N}_\lambda\right)\left(X_\lambda^{-1}(a)\mathcal{M}_\lambda-X_\lambda^{-1}(b)\mathcal{N}_\lambda\right)^{-1}(iG)^{-1},
\end{gather}
to equation \eqref{GEQ__1_}, \eqref{GEQ__2_} with $n_\lambda[y]\equiv 0$ (respectively, \eqref{GEQ__1_}-\eqref{GEQ__2_}). Here $\left(X_\lambda^{-1}(a)\mathcal{M}_\lambda-X_\lambda^{-1}(b)\mathcal{N}_\lambda\right)^{-1}\in B(\mathcal{H}^r)$, $\Im\lambda\not= 0$, $X_\lambda(t)$ is an operator solution from Theorem \ref{th4} which corresponds to equation $(l-\lambda m)[y]=0$ (respectively, $l_\lambda[y]=0$). 
 
The resolvents $y=R_{\lambda } f$ and $y_{1} =R\left(\lambda \right)f$ satisfy condition \eqref{GEQ__94_} of Theorem \ref{th8}.

\end{proposition}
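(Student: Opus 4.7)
The plan is to decompose the proof into four coordinated parts:
(i) transferring the nondegeneracy \eqref{GEQ__55_} from the unperturbed setting ($n_\lambda\equiv 0$) to the general setting;
(ii) proving unique solvability of both boundary-value problems via invertibility of $X_\lambda^{-1}(a)\mathcal{M}_\lambda-X_\lambda^{-1}(b)\mathcal{N}_\lambda$;
(iii) identifying $M(\lambda)$ in \eqref{mlambda+} as the characteristic operator producing these resolvents via Theorem~\ref{th4};
(iv) verifying \eqref{GEQ__94_} for both resolvents.

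For (i), I would start from the pointwise inequality $-\Im l_{\lambda_0}/\Im\lambda_0=m+\Im n_{\lambda_0}/\Im\lambda_0\ge m$ furnished by \eqref{GEQ__561_} and \eqref{GEQ__52_}, and use the equivalent reformulation of \eqref{GEQ__55_} in terms of $\Delta_{\lambda_0}(\alpha,\beta)\ge\delta\|Pg\|^2$. The invariance $Q(t,l_\lambda)=Q(t,l-\lambda m)$ (which holds since the order of $n_\lambda$ is dominated by that of $l-\lambda m$, and since $q_{n+1},s_{n+1}$ are $\lambda$-independent in the odd-order case) implies that the two first-order canonical systems share their principal part; the difference lies only in the bounded zero-order block $H$ and the weight $W$. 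Combined with the strict positivity $\Delta^{(0)}_{\lambda_0}(\alpha,\beta)\gg 0$ from the $P=I_r$ hypothesis in the unperturbed case, an analytic-perturbation argument (exploiting the propagation of \eqref{GEQ__55_} to all $\lambda\in\mathcal{B}$) yields the desired bound for the perturbed $\Delta_{\lambda_0}(\alpha,\beta)$.

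For (ii)--(iii), I would use the maximality of the $\mathcal{Q}$-nonnegative subspace $\{\mathcal{M}_\lambda h\oplus\mathcal{N}_\lambda h:h\in\mathcal{H}^r\}$ to force invertibility. A nonzero $h_0$ in the kernel of $X_\lambda^{-1}(a)\mathcal{M}_\lambda-X_\lambda^{-1}(b)\mathcal{N}_\lambda$ would produce a homogeneous solution $\vec{y}_0(t)=X_\lambda(t)g_0$ with $\vec{y}_0(a)=\mathcal{M}_\lambda h_0$, $\vec{y}_0(b)=\mathcal{N}_\lambda h_0$; the canonical-system energy identity $U[\vec{y}_0(b)]-U[\vec{y}_0(a)]=2\Im\lambda\int_a^b(W\vec{y}_0,\vec{y}_0)\,dt$ combined with $\mathcal{Q}$-nonnegativity forces that integral to vanish, so $g_0\in N=\{0\}$ by step (i), contradicting $\|\mathcal{M}_\lambda h_0\|+\|\mathcal{N}_\lambda h_0\|>0$. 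Variation of parameters then yields the unique BVP solution. Substituting \eqref{mlambda+} into \eqref{GEQ__47_} and evaluating at $t=a,b$ reproduces exactly the boundary data $\mathcal{M}_\lambda h(\lambda,F)$ and $\mathcal{N}_\lambda h(\lambda,F)$, confirming that the resulting vector-function solves the BVP (hence equals $R_\lambda f$, respectively $R(\lambda)f$); the characteristic property \eqref{GEQ__47++_} reduces on the finite $\mathcal{I}$ to the sign of $\Im\lambda(U[\vec{y}(b)]-U[\vec{y}(a)])$, which is precisely the maximal $\mathcal{Q}$-nonnegativity inequality applied to $h(\lambda,F)$.

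For (iv), the vector-functions $\vec{y}_1(\cdot,l_\lambda,m,f)$ and $\vec{y}(\cdot,l-\lambda m,m,f)$ satisfy their boundary conditions \eqref{bk2} and \eqref{bk1} with parameters $h_1=h_1(\lambda,f)$, $h=h(\lambda,f)$, so their difference has endpoint values $\mathcal{M}_\lambda(h_1-h)$ and $\mathcal{N}_\lambda(h_1-h)$; applying the maximal $\mathcal{Q}$-nonnegativity inequality with $k=h_1-h$ gives \eqref{GEQ__94_} directly. I expect step (i) to be the main obstacle: because both $X_\lambda$ and $W$ change when $n_\lambda$ is added, the pointwise bound $-\Im l_\lambda/\Im\lambda\ge m$ alone does not suffice (the integrands act on solutions of different ODEs), so one must rely either on analytic perturbation of $\Delta_\lambda$ with respect to the coefficients or on the propagation of the $P=I_r$ property through the equivalence of \eqref{GEQ__55_} at all $\lambda\in\mathcal{B}$.
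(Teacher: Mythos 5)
There is a genuine gap, and it is exactly where you yourself flag the ``main obstacle'': the transfer of condition \eqref{GEQ__55_} from the unperturbed equation ($n_\lambda\equiv 0$) to the full equation. This is the \emph{only} part the paper actually proves (everything else --- unique solvability of the boundary value problems, the identification of $M(\lambda)$ in \eqref{mlambda+}, and \eqref{GEQ__94_} --- is delegated to Theorems 3.2 and 3.3 of \cite{KhrabMAG,KhrabArxiv}). Your two candidate mechanisms do not close it. ``Analytic perturbation of $\Delta_\lambda$ with respect to the coefficients'' does not apply: $n_\lambda$ is not a small perturbation, and $\Delta_{\lambda_0}$ for the perturbed equation is built from a \emph{different} fundamental solution $X_\lambda$ and a \emph{different} weight $W$, so there is no continuity parameter to exploit. ``Propagation of the $P=I_r$ property to all $\lambda\in\mathcal{B}$'' is a within-one-equation statement (it moves the spectral parameter for a fixed equation); it says nothing about passing from $l-\lambda m$ to $l_\lambda=l-\lambda m-n_\lambda$.

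The paper's actual argument is a compactness/contradiction argument that you are missing. Suppose \eqref{GEQ__55_} fails for the perturbed equation; then there are solutions $y_k$ of $l_i[y_k]=0$ with unit initial data $f_k$ and $\int_\alpha^\beta(m+\Im n_i)\{y_k,y_k\}\,dt\to 0$. Since both summands are nonnegative, $\int_\alpha^\beta n_i\{y_k,y_k\}\,dt\to 0$, which by the first-order reformulation means the source term $W(\cdot,l+im,n_i)Y_k$ in the Duhamel formula \eqref{GEQ__102_} tends to zero in the weighted $L^2$ sense; hence $\|X_i(t)f_k-\tilde X_i(t)f_k\|\to 0$ uniformly on $[\alpha,\beta]$, where $\tilde X_i$ is the unperturbed fundamental solution. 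This controls $y_k^{(j)}-\tilde y_k^{(j)}$ only for $j\le n-1$ plus the quasi-derivatives of orders $n,\dots,2n-1$; the top derivative $y^{(n)}$ (which enters $m\{\cdot,\cdot\}$) is recovered by passing to a subsequence along which $m\{y_{k_q},y_{k_q}\}$ and $n_i\{y_{k_q},y_{k_q}\}$ tend to zero almost everywhere, deducing $y_{k_q}^{[j]}(t|n_i)\to 0$ a.e.\ for $j\ge n$, and inverting $p_n-i\tilde p_n$ in the identity \eqref{GEQ__107_} to get $(\tilde p_n\tilde y_{k_q}^{(n)},\tilde y_{k_q}^{(n)})\to 0$ a.e. This yields $m\{\tilde y_{k_q},\tilde y_{k_q}\}\to 0$ with unit initial data, contradicting \eqref{GEQ__55_} for the unperturbed equation. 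None of this is reproducible from the pointwise bound $0\le m\le -\Im l_\lambda/\Im\lambda$ alone. Your parts (ii)--(iv) are sound in outline (and in fact more self-contained than the paper, which cites them; your derivation of \eqref{GEQ__94_} from the maximal $\mathcal{Q}$-nonnegativity applied to $h_1-h$ is the right one-line argument), but the proposal as it stands does not prove the first assertion of the proposition.
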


Let us notice that if $\mathcal{I}$ is finite and condition \eqref{GEQ__55_} holds with $P=I_r$ then $M(\lambda)$ \eqref{mlambda+} is a characteristic operator of equation \eqref{GEQ__5_} and any characteristic operator of equation \eqref{GEQ__5_} has representation (\ref{mlambda+}). Also we notice that if $\mathcal{I}$ is finite, $n_\lambda[y]\equiv 0$ and condition \eqref{GEQ__55_} holds with $P=I_r$ then any generalized resolvent of $\mathcal{L}_0$ can be constructed as an operator $R(\lambda)$ from Theorem \ref{th4} in view of \cite{KhrabMAG,KhrabArxiv}.

\begin{proof}

In view of Theorems 3.2, 3.3 from \cite{KhrabMAG,KhrabArxiv}  it is sufficient to prove only
proposition about condition \eqref{GEQ__55_}.

Let for definiteness order $l=$ order $m=$ order $n_{\lambda }
=2n$.

Let for equation \eqref{GEQ__1_}-\eqref{GEQ__2_} with
$n_{\lambda } \left[y\right]\equiv 0$ condition
\eqref{GEQ__55_} with ${P} ={ I}_{r}$ hold, but for equation
\eqref{GEQ__1_}, \eqref{GEQ__2_} that is not true. Then in
view of \cite{Khrab5} the solutions $y_k\left(t\right)$ of
equation \eqref{GEQ__1_}-\eqref{GEQ__2_} with $f(t)=0$, $\lambda
=i$ exist for which
\begin{equation} \label{GEQ__100_}
\int _{\alpha }^{\beta }\left(m+\Im n_{i} \right)\left\{y_{k}
,y_{k} \right\} dt\to 0,\quad \vec{y}_{k} \left(0,l_{i}
,m,0\right)=f_{k} ,\; \left\| f_{k} \right\| =1,
\end{equation}
where $i\Im n_i =n_{i} $ in view of \eqref{GEQ__561_}. Hence
in view of  Theorem 1.2 from \cite{KhrabMAG,KhrabArxiv}) 
\begin{equation} \label{GEQ__101_}
\int _{\alpha }^{\beta }\left(W_{i} \left(t,l+im,n_{i} \right)
Y_{k} \left(t,l+im,n_i\right),Y_{k}\left(t,l+im,n_i\right)
\right)dt = \int _{\alpha }^{\beta }n_{i} \left\{y_{k} ,y_{k}
\right\}dt \to 0.
\end{equation}

On the other hand
\begin{equation} \label{GEQ__102_}
X_{i} \left(t\right)f_{k} =\tilde X_{i} \left(t\right)f_{k}
+\tilde X_{i} \left(t\right)\int _{o}^{t}\tilde X_{i}^{-1}
\left(s\right)J^{-1} W\left(s,l+im,n_{i} \right)Y_{k}
\left(s,l+im,n_i\right) ds .
\end{equation}
in view of Theorem \ref{th1} and the fact that
$\vec{y}_k(t,l-im,n_i,y_k)=\vec{y}_k(t,l_i,m,0)$, where $\tilde
X_{\lambda } \left(t\right)$ is an analogue of $X_{\lambda }
\left(t\right)$ for the case $n_{\lambda } \left[y\right]\equiv
0$.

Comparing \eqref{GEQ__101_}, \eqref{GEQ__102_} we see that
\begin{equation} \label{GEQ__103_}
\left\|X_{i} \left(t\right)f_{k} -\tilde X_{i} \left(t\right)f_{k}
\right\| \to 0
\end{equation}
uniformly in $t\in \left[\alpha ,\beta \right]$.

In view of \eqref{GEQ__100_} subsequence $y_{k_{q} } $ exist
such that
\begin{equation} \label{GEQ__104_}
m\left\{y_{k_{q} } ,y_{k_{q} } \right\}\mathop{\to }\limits^{a.a.} 0,\quad n_{i} \left\{y_{k_{q} } ,y_{k_{q} } \right\}\mathop{\to }\limits^{a.a.} 0.
\end{equation}
Due to second proposition in \eqref{GEQ__104_} and the arguments
as in the proof of Proposition 3.1 from \cite{KhrabMAG}  one has
\begin{equation} \label{GEQ__105_}
y_{k_{q} }^{\left[j\right]} \left(t\left|n_{i} \right.
\right)\mathop{\to }\limits^{a.a.} 0\quad j=n,\ldots ,2n.
\end{equation}

Let us denote $\tilde y_{k_{q} } \left(t\right)=\tilde X_{i}
\left(t\right)f_{k_{q} } $. In view of Theorem \ref{th1} and
\eqref{GEQ__103_}
\begin{equation} \label{GEQ__106_}
\left\| y_{k_{q} }^{\left(j\right)} \left(t\right)-\tilde y_{k_{q}
}^{\left(j\right)} \left(t\right)\right\| \to 0,\quad j=1,\ldots
,n-1,
\end{equation}
\begin{multline} \label{GEQ__107_}
\left\| \left(p_{n} \left(t\right)-i\tilde{p}_{n}
\left(t\right)\right)\left[y_{k_{q} }^{\left(n\right)}
\left(t\right)-\tilde y_{k_{q} }^{\left(n\right)}
\left(t\right)\right]-\right.\\\left.-{i\over 2}(q_n(t)-i\tilde
q_n(t))\left[y_{k_q}^{(n-1)}(t)-\tilde y_{k_q}^{(n-1)}(t)\right]
-y_{k_{q} }^{\left[n\right]} \left(t\left|n_{i} \right.
\right)\right\| =\\=\left\| y_{k_{q} }^{\left[n\right]}
\left(t|l_i\right)-\tilde y_{k_{q} }^{\left[n\right]}
\left(t|l-im\right)\right\| \to 0
\end{multline}
uniformly in $t\in \left[\alpha ,\beta \right]$. Comparing
\eqref{GEQ__104_}, \eqref{GEQ__105_},
\eqref{GEQ__107_} and using $\left(p_{n}
\left(t\right)-i\tilde{p}_{n} \left(t\right)\right)^{-1} \in
B\left(\mathcal{H}\right)$ we have
\begin{equation} \label{GEQ__108_}
\left(\tilde{p}_{n} \left(t\right)\tilde y_{k_{q}
}^{\left(n\right)} \left(t\right),\tilde y_{k_{q}
}^{\left(n\right)} \left(t\right)\right)\mathop{\to
}\limits^{a.a.} 0.
\end{equation}
In view of \eqref{GEQ__106_}, \eqref{GEQ__108_},
\eqref{GEQ__104_}
$$
m\left\{\tilde y_{k_{q} } ,\tilde y_{k_{q} } \right\}\mathop{\to
}\limits^{a.a.} 0,\quad \vec{y}_{k_q}(0,l-im, m,0)=f_{k_q},
$$
that contradicts to the condition \eqref{GEQ__55_} with
$P=I_{r} $ for equation \eqref{GEQ__1_}, \eqref{GEQ__2_}
with $n_{\lambda } \left[y\right]\equiv 0$. Proposition
\ref{prop2} is proved.
\end{proof}


If the set ${\mathbb R}^{1} \backslash \mathcal{B}^1$ has no finite limit points then to verify the condition $\mathop{P}\limits^{\circ } \int \ldots =0 $ or $\int \ldots =0$ in \eqref{star3} we can use  the following proposition which is a corollary of Lemma from \cite[p. 789]{Shtraus2}.

\begin{proposition}\label{prop_str1}
Let $R\left(\lambda \right)=\int _{{\mathbb R}^{1} }\frac{dE_{\mu } }{\mu -\lambda }  $, where $E_{\mu } $ is generalized spectral family in Hilbert space $\mathbf{H} ;\, g\in \mathbf{H} $. If $\sigma $ is not a point of continuity of $E_{\mu} g$, then $\exists c\left(\sigma,g\right)>0$: $\left\| R\left(\sigma +i\tau \right)g\right\| \sim \frac{c\left(\sigma ,g\right)}{\left|\tau \right|} ,\, \, \, \tau \to 0$.
\end{proposition}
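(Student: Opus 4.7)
The plan is to split off the contribution of the jump of $E_\mu g$ at $\sigma$ and show that the remainder in the Stieltjes integral is of order $o(1/|\tau|)$. Set $h:=(E_{\sigma+0}-E_\sigma)g$; since $E_\mu=E_{\mu-0}$, the hypothesis that $\sigma$ is not a point of continuity of $E_\mu g$ means $h\neq 0$, and I claim $c(\sigma,g):=\|h\|$ works. The vector measure $dE_\mu g$ has an atom of mass $h$ at $\mu=\sigma$, so
\begin{equation*}
R(\sigma+i\tau)g=\int\frac{dE_\mu g}{\mu-\sigma-i\tau}=\frac{ih}{\tau}+r(\sigma,\tau)g,\qquad r(\sigma,\tau)g:=\int_{\mu\neq\sigma}\frac{dE_\mu g}{\mu-\sigma-i\tau}.
\end{equation*}
By the reverse triangle inequality $\bigl|\,|\tau|\,\|R(\sigma+i\tau)g\|-\|h\|\,\bigr|\leq|\tau|\,\|r(\sigma,\tau)g\|$, so it suffices to prove $|\tau|\,\|r(\sigma,\tau)g\|\to 0$ as $\tau\to 0$.

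For the remainder bound I would invoke Neumark's dilation theorem to realize $E_\mu$ as $E_\mu=P\tilde E_\mu$, where $\tilde E_\mu$ is an orthogonal resolution of the identity in a Hilbert space $\tilde{\mathbf{H}}\supseteq\mathbf{H}$ and $P$ is the orthoprojection onto $\mathbf{H}$. Then $r(\sigma,\tau)g=P\tilde r(\sigma,\tau)g$, and because $\tilde E_\mu$ is projection-valued one has
\begin{equation*}
\|r(\sigma,\tau)g\|^{2}\leq\|\tilde r(\sigma,\tau)g\|^{2}=\int_{\mu\neq\sigma}\frac{d\tilde F(\mu)}{(\mu-\sigma)^{2}+\tau^{2}},\qquad \tilde F(\mu):=(\tilde E_\mu g,g).
\end{equation*}
This reduces the problem to a classical Poisson-type estimate for the scalar nondecreasing function $\tilde F$: for any $\delta>0$,
\begin{equation*}
\tau^{2}\int_{\mu\neq\sigma}\frac{d\tilde F(\mu)}{(\mu-\sigma)^{2}+\tau^{2}}\leq\frac{\tau^{2}}{\delta^{2}}\tilde F(+\infty)+\int_{0<|\mu-\sigma|<\delta}d\tilde F(\mu).
\end{equation*}
The first summand is $O(\tau^{2})$, and the second tends to zero as $\delta\downarrow 0$ because on the set $\{\mu\neq\sigma\}$ the measure $d\tilde F$ has no atom at $\sigma$. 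Sending $\tau\to 0$ and then $\delta\to 0$ gives $\tau^{2}\|r(\sigma,\tau)g\|^{2}\to 0$, which combined with the decomposition above yields $|\tau|\,\|R(\sigma+i\tau)g\|\to\|h\|$.

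I expect the main point to pin down is the passage to the dilation $\tilde E_\mu$ — this is what allows one to replace a vector Stieltjes integral against a non-orthogonal measure by a scalar Lebesgue–Stieltjes integral to which the standard Poisson kernel argument applies. Everything else (the atom extraction, the triangle inequality, and the two-region split in the last display) is routine, which is consistent with the statement being presented as a direct corollary of the Shtraus lemma.
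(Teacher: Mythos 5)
Your proof is correct and uses the same decomposition as the paper: both extract the jump $h=(E_{\sigma+0}-E_\sigma)g\neq 0$, write $R(\sigma+i\tau)g=\frac{ih}{\tau}+r(\sigma,\tau)g$, and reduce everything to showing $\|r(\sigma,\tau)g\|=o(1/|\tau|)$. The only difference is how that remainder estimate is obtained: the paper simply cites the Lemma from Shtraus (Izv.\ Akad.\ Nauk SSSR 20 (1956), p.~789), adding a footnote that the lemma is stated there for families with $E_\infty=$ identity but that its proof survives in the general case, whereas you prove the estimate from scratch via Neumark dilation plus the standard two-region Poisson-kernel bound for the scalar measure $d(\tilde E_\mu g,g)$. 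Your route is therefore self-contained and, as a bonus, it substantiates exactly the point the paper relegates to a footnote, since the dilation argument works verbatim for a generalized spectral family with $E_\infty\le\mathbf{I}$ (after the routine step of completing the sub-normalized measure to a full resolution of the identity in the dilated space, which does not affect the integral over $\mathbb{R}^1$). The price is invoking Neumark's theorem where the paper invokes nothing beyond a reference; both are legitimate, and the two arguments are otherwise identical.
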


\begin{proof} Let $\Delta $ be a jump of $E_{\mu } $ in the point $\sigma $. Then $\Delta g\ne 0$,
\begin{gather}\label{prop_str1_1}
R\left(\sigma +i\tau \right)g=i\frac{\Delta }{\tau } g+\int _{{\mathbb R}^{1} }\frac{d\tilde{E}_{\mu } g}{\mu -\lambda } \text{ where }\tilde{E}_{\mu } =\begin{cases} E_{\mu } ,&\mu \leq\sigma \\ E_{\mu } -\Delta ,&\mu >\sigma  \end{cases}
\end{gather}
Since the second  term in (\ref{prop_str1_1}) is $o\left({\raise0.7ex\hbox{$ 1 $}\!\mathord{\left/ {\vphantom {1 \left|\tau \right|}} \right. \kern-\nulldelimiterspace}\!\lower0.7ex\hbox{$ \left|\tau \right| $}} \right)$ in view of \cite[p. 789]{Shtraus2}\footnote{ Lemma from \cite[789]{Shtraus2} is proved for families  $E_{\mu } $  with  $E_{\infty } =$identity operator. But analysis of its proof shows that it is valid in general case.} proposition in proved.

\end{proof}


In the next theorem $\mathcal{I}=\mathbb{R}^1$ and condition
\eqref{GEQ__55_} hold with $P=I_{r} $ both on the negative
semi-axis $\mathbb{R}^1_{-} $ (i.e. as $\mathcal{I}=\mathbb{R}^1_{-} $) and on the
positive semi-axis $\mathbb{R}^1_{+} $ (i.e. as $\mathcal{I}=\mathbb{R}^1_{+} $).

\begin{theorem}\label{th9}
Let $\mathcal{I}=\mathbb{R}^1$, the coefficient of the expression
$l_\lambda[y]$ \eqref{GEQ__2_} be periodic on each of the
semi-axes $\mathbb{R}^1_{-} $ and $\mathbb{R}^1_{+} $ with periods $T_{-} >0$ and $T_{+}
>0$ correspondingly. Then the spectrums of the monodromy operators
$X_{\lambda } \left(\pm T_{\pm } \right)$ ($X_{\lambda }
\left(t\right)$ is from Theorem \ref{th4}) do not intersect the
unit circle as $\Im  \lambda \ne 0$, the characteristic operator $M\left(\lambda
\right)$ of the equation \eqref{GEQ__5_} is unique and equal
to
\begin{equation} \label{GEQ__109_}
M\left(\lambda \right)=\left(\mathcal{P}\left(\lambda
\right)-\frac{1}{2} I_{r} \right)\, \left(iG\right)^{-1} \quad
\left(\Im  \lambda \ne 0\right),
\end{equation}
where the projection $\mathcal{P}\left(\lambda \right)=P_{+}
\left(\lambda \right)\left(P_{+} \left(\lambda \right)+P_{-}
\left(\lambda \right)\right)^{-1} ,\, P_{\pm } \left(\lambda
\right)$ are Riesz projections of the monodromy operators
$X_{\lambda } \left(\pm T_{\pm } \right)$ that correspond to their
spectrums lying inside the unit circle, $\left(P_{+} \left(\lambda
\right)+P_{-} \left(\lambda \right)\right)^{-1} \in
B\left(\mathcal{H}^{r} \right)$ as $\Im  \lambda \ne 0$.

Also let $\dim \mathcal{H}<\infty $, a finite interval $\Delta
\subseteq \mathcal{B}^1 $. Then in Theorem \ref{th7} $d\sigma
\left(\mu \right)=d\sigma _{ac} \left(\mu \right)+d\sigma _{d}
\left(\mu \right),\mu \in \Delta $. Here $\sigma _{ac} \left(\mu
\right)\in AC\left(\Delta \right)$ and, for $\mu \in \Delta $,
$$\sigma '_{ac} \left(\mu \right)=\frac{1}{2\pi } G^{-1}
\left(Q_{-}^{*} \left(\mu \right)GQ_{-} \left(\mu
\right)-Q_{+}^{*} \left(\mu \right)GQ_{+} \left(\mu
\right)\right)G^{-1}, $$ where the projections $Q_{\pm } \left(\mu
\right)=q_{\pm } \left(\mu \right)\left(P_{+} \left(\mu
\right)+P_{-} \left(\mu \right)\right)^{-1} ,\, q_{\pm } \left(\mu
\right)$ are Riesz projections of the monodromy matrixes $X_{\mu }
\left(\pm T_{\pm } \right)$ corresponding to the multiplicators
belonging to the unit circle and such that they are shifted inside
the unit circle as $\mu $ is shifted to the upper half plane,
$P_{\pm } \left(\mu \right)=P_{\pm } \left(\mu +i0\right);\,
\sigma _{d} \left(\mu \right)$ is a step function.
\end{theorem}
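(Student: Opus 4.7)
The plan is to handle the three assertions in succession: (i) the spectral gap of the monodromy operators $X_\lambda(\pm T_\pm)$ from the unit circle when $\Im\lambda\neq 0$; (ii) the uniqueness of $M(\lambda)$ and its representation via Riesz projections; (iii) the explicit formula for the absolutely continuous density $\sigma'_{ac}(\mu)$.

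For (i), I would exploit the Lagrange-type identity for the canonical form \eqref{GEQ__54_}: for any solution $X_\lambda(t)h$ of the homogeneous equation,
\[
U[X_\lambda(T)h]-U[h]=2\Im\lambda\int_0^T\!\Big(\tfrac{\Im H(s,l_\lambda)}{\Im\lambda}X_\lambda(s)h,X_\lambda(s)h\Big)ds.
\]
If $X_\lambda(T_+)h=\rho h$ with $|\rho|=1$, the left-hand side vanishes (since $U[\rho h]=|\rho|^2U[h]$), so the right-hand side forces $\Delta_\lambda(0,T_+)h=0$. Iterating over $N$ periods and invoking condition \eqref{GEQ__55_} on the semi-axis $\mathbb{R}^1_+$ (with $P=I_r$), applied on an interval $[0,NT_+]$ for sufficiently large $N$, gives $h=0$, establishing the gap on $\mathbb{R}^1_+$. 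The argument on $\mathbb{R}^1_-$ is identical with the roles reversed.

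For (ii), once the spectra of $X_\lambda(\pm T_\pm)$ avoid the unit circle, the Riesz projections $P_\pm(\lambda)$ depend analytically on $\lambda$ off the real axis. I would verify the invertibility $(P_++P_-)^{-1}\in B(\mathcal{H}^r)$ as follows: if $(P_++P_-)h=0$ then $u:=P_+h=-P_-h$ belongs to $\mathrm{Range}(P_+)\cap\mathrm{Range}(P_-)$, so $X_\lambda(t)u$ decays at both ends, hence is $W_\lambda$-square integrable on $\mathbb{R}^1$; for $\Im\lambda\neq 0$ this forces $u=0$ (eigenfunction at a non-real parameter for the associated symmetric relation). The relations $P_\pm^2=P_\pm$ then give $\mathcal{P}^2=\mathcal{P}$ by direct manipulation. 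To verify that the $M(\lambda)$ given by \eqref{GEQ__109_} is \emph{the} characteristic operator, I would show that the separated boundary conditions \eqref{12} hold: the range of $\mathcal{P}(\lambda)$ equals $\mathrm{Range}(P_+(\lambda))$, so $X_\lambda(t)\mathcal{P}(\lambda)$-columns decay at $+\infty$, while the range of $I-\mathcal{P}(\lambda)$ equals $\mathrm{Range}(P_-(\lambda))$, so the complementary solutions decay at $-\infty$. By Theorem \ref{th++}, $\mathcal{P}(\lambda)$ is the corresponding characteristic projection. Uniqueness then follows from the fact that any other characteristic operator would produce, via \eqref{GEQ__47_}, resolvent integrals differing from ours by a rank structure sitting in the $L^2$-kernel, which is trivial at non-real $\lambda$ by the same global-$L^2$ argument.

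For (iii), I would use the Stieltjes inversion $\sigma'_{ac}(\mu)=\tfrac{1}{\pi}\Im M(\mu+i0)$ together with the computation
\[
\Im M(\lambda)=-\tfrac12\bigl[\mathcal{P}(\lambda)G^{-1}+G^{-1}\mathcal{P}^*(\bar\lambda)\bigr]+\tfrac12 G^{-1},
\]
which follows from \eqref{GEQ__109_} and $(iG)^{-1}=-iG^{-1}$. Since $\dim\mathcal{H}<\infty$, the monodromy matrices $X_\mu(\pm T_\pm)$ depend analytically on $\mu$ on $\mathcal{B}^1$, and their multiplicators that lie on the unit circle split according to direction of motion: the ``upward-inside'' ones are collected by $q_\pm(\mu)$, so $P_\pm(\mu+i0)-P_\pm(\mu-i0)=q_\pm(\mu)$. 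Substituting these jumps into $\mathcal{P}(\mu\pm i0)=P_+(\mu\pm i0)(P_+(\mu\pm i0)+P_-(\mu\pm i0))^{-1}$, using the algebraic identity $\mathcal{P}^*(\bar\lambda)G\mathcal{P}(\lambda)=0$ \cite{Khrab5} preserved in the limits (applied symmetrically in $P_\pm$), and simplifying yields $\sigma'_{ac}(\mu)=\tfrac1{2\pi}G^{-1}(Q_-^*GQ_--Q_+^*GQ_+)G^{-1}$. The discrete part $\sigma_d$ collects the point masses at real $\mu\in\mathcal{B}^1$ where $\mathcal{P}(\lambda)$ has a pole arising from non-invertibility of $P_+(\mu)+P_-(\mu)$, i.e.\ from the presence of a genuinely global $L^2$-solution.

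The main obstacle is the third step: the careful tracking of how the Riesz projections split at the real boundary and the algebraic manipulation needed to extract the symmetric form $G^{-1}(Q_-^*GQ_--Q_+^*GQ_+)G^{-1}$ from the difference $\mathcal{P}(\mu+i0)-\mathcal{P}(\mu-i0)$. All other parts reduce to standard Floquet/dissipativity arguments and the already-established theory of characteristic operators.
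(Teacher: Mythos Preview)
The paper's own proof is a one-line deferral to \cite{Khrab6}, so your plan is in the same spirit and follows the right Floquet-theoretic template. However, two steps in your sketch do not close as written, and both are repaired by the same missing ingredient: the $G$-dichotomy (Krein-space) structure of the monodromy operators.

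In part (i) you only exclude \emph{eigenvalues} of $X_\lambda(\pm T_\pm)$ on the unit circle. In infinite dimensions this is weaker than the assertion that the \emph{spectrum} misses the unit circle; approximate eigenvectors on the circle are not ruled out by your argument. The standard remedy is to observe that the Lagrange identity together with condition \eqref{GEQ__55_} (with $P=I_r$) yields a uniform bound
\[
\pm\Im\lambda\bigl(X_\lambda^*(\pm T_\pm)GX_\lambda(\pm T_\pm)-G\bigr)\gg 0,
\]
i.e.\ the monodromy operators are strictly $G$-expansive/contractive; for such operators the spectrum splits away from the unit circle by classical Krein-space dichotomy results.

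In part (ii) your injectivity argument for $P_++P_-$ stops short: from $(P_++P_-)h=0$ you correctly deduce $u:=P_+h=-P_-h\in\mathrm{Ran}\,P_+\cap\mathrm{Ran}\,P_-$ and then $u=0$, but this only gives $P_+h=P_-h=0$, i.e.\ $h\in\ker P_+\cap\ker P_-$, not $h=0$. You also do not address surjectivity (bounded inverse) at all, which is not automatic when $\dim\mathcal{H}=\infty$. Again the fix is the $G$-structure: strict $G$-expansiveness forces $\mathrm{Ran}\,P_+$ and $\mathrm{Ran}(I-P_+)$ to be maximal uniformly $G$-definite of opposite signs, and similarly for $P_-$ with the signs exchanged. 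Then $\ker P_+\cap\ker P_-=\mathrm{Ran}(I-P_+)\cap\mathrm{Ran}(I-P_-)=\{0\}$ by sign incompatibility, and $\mathrm{Ran}\,P_+\dotplus\mathrm{Ran}\,P_-=\mathcal{H}^r$ with a bounded angular operator, which gives $(P_++P_-)^{-1}\in B(\mathcal{H}^r)$. A minor slip in part (iii): in your displayed formula for $\Im M(\lambda)$ the adjoint should be $\mathcal{P}^*(\lambda)$, not $\mathcal{P}^*(\bar\lambda)$.
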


Let us notice that the sets on which $q_{\pm } \left(\mu
\right),P_{\pm } \left(\mu \right),\left(P_{+} \left(\mu
\right)+P_{-} \left(\mu \right)\right)^{-1} $ are not infinitely
differentiable do not have finite limit points $\in \mathcal{B}^1 $
as well as the set of points of increase of $\sigma _{d} \left(\mu
\right)$.

\begin{proof}
The proof of Theorem \ref{th9} is similar to that on in the case
$n_{\lambda } \left[y\right]\equiv 0$ \cite{Khrab6}.
\end{proof}

The following examples demonstrate effects that are the results of
appearance in $l_{\lambda } $ \eqref{GEQ__2_} of perturbation
$n_{\lambda } $ depending nonlinearly on $\lambda $.

In Examples \ref{ex3}, \ref{ex4} nonlinear in $\lambda $
perturbation does not change the type of the spectrum. In this
examples $\dim \mathcal{H}=1,\, \, m\left[y\right]=-y''+y$.
$L_{m}^{2} \left(\mathcal{I}\right)=\mathop{L_{m}^{2}
}\limits^{\circ } \left(\mathcal{I}\right)=W_{2}^{1,2}
\left(\mathbb{R}^{1} \right)$. In Example \ref{ex5} such
perturbation implies an appearance of spectral gap with
"eigenvalue" in this gap.

\begin{example}\label{ex3}
Let
$$
l_{\lambda } \left[y\right]=iy'-\lambda
\left(-y''+y\right)-\left(-\frac{h}{\lambda } y\right),\, \, \,
h\ge 0.$$ Here $\mathcal{B} =\mathbb{C}\setminus 0,\, \, E_{0}
=E_{+0} $, spectral matrix $\sigma \left(\mu \right)\in AC_{loc}
$,
\\
$\sigma '\left(\mu \right)=\frac{1}{2\pi } \left(\begin{array}{cc}
{\frac{2}{\sqrt{4h+1-4\mu ^{2} } } } & {0} \\ {0} & {\frac{1}{2}
\sqrt{4h+1-4\mu ^{2} } } \end{array}\right)$, as $\left|\mu
\right|<\sqrt{h+{1 \mathord{\left/{\vphantom{1
4}}\right.\kern-\nulldelimiterspace} 4} } $,
\\
$\sigma '\left(\mu \right)=0$, as $\left|\mu \right|>\sqrt{h+{1
\mathord{\left/{\vphantom{1 4}}\right.\kern-\nulldelimiterspace}
4} } $.

In Example \ref{ex3} nonlinear in $\lambda $ perturbation change
edges of spectral band.
\end{example}

\begin{example}\label{ex4}
Let
$$
l_{\lambda } \left[y\right]=y^{\left(IV\right)} -\lambda
\left(-y''+y\right)-\left(-\frac{h}{\lambda } y\right),\, \, \,
h\ge 0.$$ Here $\mathcal{B} =\begin{cases}
\mathbb{C}\setminus\{0\},&h\ne 0 \\ \mathbb{C},&h=0
\end{cases},\, \, \, E_{0} =E_{+0} $, spectral matrix
$\sigma \left(\mu \right)\in AC_{loc} $,
\begin{gather*}
\sigma '\left(\mu \right)=\begin{cases} \frac{1}{2\pi }
\sqrt{\frac{\lambda +\sqrt{D} }{D} } \left(\begin{array}{cccc}
{\frac{2}{\lambda +\sqrt{D} } } & {0} & {0} & {-1} \\ {0} & {1} &
{\frac{-\lambda +\sqrt{D} }{2} } & {0} \\ {0} & {\frac{-\lambda
+\sqrt{D} }{2} } & {\left(\frac{\lambda -\sqrt{D} }{2} \right)^{2}
} & {0} \\ {-1} & {0} & {0} & {\frac{\lambda +\sqrt{D} }{2} }
\end{array}\right),&\text{ as }-\sqrt{h} <\mu <0,\, \mu
>\sqrt{h}  \\ \frac{1}{2\pi } \cdot \frac{1}{\sqrt{\lambda
-2\sqrt{q} } } \left(\begin{array}{cccc} {\frac{1}{\sqrt{q} } } &
{0} & {0} & {-1} \\ {0} & {1} & {-\sqrt{q} } & {0} \\ {0} &
{-\sqrt{q} } & {\sqrt{q} \left(\lambda -\sqrt{q} \right)} & {0} \\
{-1} & {0} & {0} & {\lambda -\sqrt{q} }
\end{array}\right),&\text{ as }\mu ^{*} <\mu <\sqrt{h},
\end{cases},
\end{gather*}
where $D=\mu ^{2} -4q,\, q={h/\mu} -\mu ,\,
\mu ^{*} =\mu ^{*} \left(h\right)$ - nonnegative root of equation
$D=0$. $\sigma '\left(\mu \right)=0$, as $\mu \notin
\left[-\sqrt{h} ,0\right]\bigcup \left[\mu ^{*} ,\infty \right)$.

In Example \ref{ex4} nonlinear in $\lambda $ perturbation implies
an appearance of additional spectral band $\left[-h,0\right]$,
variation of edge of semi-infinite spectral band and appearance
of interval $\left(\mu ^{*} ,\sqrt{h} \right)$ of fourfold
spectrum.
\end{example}

\begin{example}\label{ex5}
Let $\dim \mathcal{H}=2$,
$$l_{\lambda } \left[y\right]=\left(\begin{array}{cc} {0} & {-1} \\
{1} & {0}
\end{array}\right)y'-\lambda y-\left(\begin{array}{cc} {-{h
\mathord{\left/{\vphantom{h \lambda
}}\right.\kern-\nulldelimiterspace} \lambda } } & {0} \\ {0} & {0}
\end{array}\right)y,\quad h\ge 0.$$
Here $\mathcal{B} =\begin{cases} \mathbb{C}\setminus\{0\},& h\ne 0 \\
\mathbb{C},& h=0 \end{cases},$ spectral matrix $\sigma \left(\mu
\right)=\sigma _{ac} \left(\mu \right)+\sigma _{d} \left(\mu
\right)$, $\sigma_{ac}(\mu)\in AC_{loc},  \sigma '_{ac} \left(\mu
\right)\ne 0$, as $\left|\mu \right|>\sqrt{h} ,\, \, \sigma '_{ac}
\left(\mu \right)=0$, as $\left|\mu \right|<\sqrt{h} $,
step-function $\sigma _{d} \left(\mu \right)$ has only one jump
$\left(\begin{array}{cc} {0} & {0} \\ {0} & {{\sqrt{h}
\mathord{\left/{\vphantom{\sqrt{h}
2}}\right.\kern-\nulldelimiterspace} 2} } \end{array}\right)$ in
point $\mu =0$ (inside of spectral gap). In this point
$$\left(E_{+0} -E_{0} \right)f=\left(\begin{array}{cc} {0} & {0} \\
{0} & {{\sqrt{h}  \mathord{\left/{\vphantom{\sqrt{h}
2}}\right.\kern-\nulldelimiterspace} 2} } \end{array}\right)\int
_{-\infty }^{\infty }e^{-\sqrt{h} \left|t-s\right|}
f\left(s\right) ds,\, \, \, f\left(t\right)\in L^{2}
\left(\mathbb{R}^{1} \right).$$
\end{example}

Let us explain that in Examples \ref{ex3}, \ref{ex4}: 1) Spectral matices are locally absolutely continions in view of Theorem 3.6 and estimates of the type $\left\| M\left(\lambda \right)\right\| \sim \frac{c}{\left|\lambda \right|^{\alpha } } \, \left(\lambda \to i0\right),\, \alpha <1$ for corresponding characteristic operators  (cf. \cite{Khrab1}) that follows from \eqref{GEQ__109_}; 2) Equalities $E_{0} =E_{+0} $ follow from Proposition \ref{prop_str1}, equality $L_{m}^{2} \left(\mathbb{R}^{1} \right)=\mathop{L_{m}^{2} }\limits^{\circ } \left(\mathbb{R}^{1} \right)$
 and estimates of the type $\left\| R\left(i\tau \right)g\right\| _{m} \le \frac{c\left(g\right)}{\left|\tau \right|^{\beta } } ,\, \left(\tau \to 0\right),\, \beta <1,\, g\in \mathop{{\rm H} }\limits^{\circ } $ that follows from Theorems \ref{th4}, \ref{th9} and Floquet Theorem. 

Let us notice that in view of Floquet theorem conditions of
Theorem \ref{th8} ((\ref{GEQ__94_}) with account of Remark
\ref{rem3}) hold for all Examples \ref{ex3}-\ref{ex5}.

The following theorem is a generalization of results from \cite{Shtraus2} on the expansion in solutions of scalar Sturm-Louville equation which satisfy in regular end point the boudary condition depending on spectral parameter.

\begin{theorem} \label{th24}
Let $r=2n,\ \mathcal{I}=\left(0,\infty \right)$, condition (\ref{GEQ__55_})  with $P=\mathcal{I}_{2n} $ hold. Let contraction $v\left(\lambda \right)\in B\left(\mathcal{H}^{n} \right)$ satisfy the conditions of Lemma \ref{lm12}. Let $v\left(\lambda \right)$ analytically depend in $\lambda $ in any points of $\mathcal{B}^1=\mathbb{R}^{1} \bigcap \mathcal{B}$ and be unitary in this points. 

Let $R\left(\lambda \right)$ (\ref{GEQ__63_}) correspond to characteristic operator $M\left(\lambda \right)$ (\ref{13}), (\ref{GEQ__64ad1_}) of equation (\ref{GEQ__5_}), where characteristic projection (\ref{GEQ__64ad1_})   corresponds to some Weyl function $m(\lambda)$ of equation (\ref{GEQ__51_}) and to pair (\ref{alambda}), (\ref{barlambda}) which is constructed with the help of this $v\left(\lambda \right)$. Let the generalized spectral family $E_\mu$ correspond to $R\left(\lambda \right)$ by (\ref{GEQ__3_}).

Let $m_{a,b} \left(\lambda \right)$ be Nevanlinna operator-function from Lemma \ref{lm12} corresponding by (\ref{GEQ__64ad1_}), (\ref{GEQ__68ad1_}), (\ref{alambda}), (\ref{barlambda}) to this $v\left(\lambda \right)$. Let $\sigma _{a,b} \left(\mu \right)=w-\mathop{\lim }\limits_{\varepsilon \downarrow 0} \frac{1}{\pi } \int_{0}^{\mu }{\Im}m_{a,b} \left(\mu +i\varepsilon \right) d\mu $ be the spectral operator-function that corresponds to $m_{a,b} \left(\lambda \right)$. 

Then every proposition of Theorem \ref{th7} is valid with $\sigma _{a,b} \left(\mu \right)$ instead of $\sigma \left(\mu \right)$, $\left(u_{1} \left(t,\lambda \right),\ldots ,u_{n} \left(t,\lambda \right)\right)$ instead of $\left[X_{\lambda } \left(t\right)\right]_{1} $ and
\begin{multline*}\varphi \left(\mu ,f\right)=\int _{\mathcal{I}}\left(u_{1} \left(t,\mu \right),\ldots ,u_{n} \left(t,\mu \right)\right)^{*} m\left[f\left(t\right)\right] dt=\\=\int _{\mathcal{I}}\sum _{k=0}^{{s\mathord{\left/ {\vphantom {s 2}} \right. \kern-\nulldelimiterspace} 2} }\left(u_{1}^{\left(k\right)} \left(t,\mu \right),\ldots ,u_{n}^{\left(k\right)} \left(t,\mu \right)\right)^{*}  \mathrm{m}_{k} \left[f\left(t\right)\right]dt
\end{multline*}
instead of $\varphi(\mu,f)$ \eqref{GEQ__85_}, where $u_{j} \left(t,\lambda \right)$ see (\eqref{GEQ__115_}), $\mathrm{m}_{k} \left[f\left(t\right)\right]$ see \eqref{mk}.

Therefore if we represent spectral operator-function $\sigma _{ab} \left(\mu \right)$ in matrix form: $\sigma _{a,b} \left(\mu \right)=\left\| \left(\sigma _{ab} \left(\mu \right)\right)_{ij} \right\| _{i,j=1}^{n} ,\, \left(\sigma _{a,b} \left(\mu \right)\right)_{ij} \in B\left(\mathcal{H}\right)$ then, for example\footnote{Also \eqref{GEQ__84_}, Parseval equality \eqref{GEQ__88_}, Bessel inequality \eqref{GEQ__89_} can be represented in a similar way}, the following inversion formula is valid in $L_{m}^{2} \left(0,\infty \right)$ for any vector-function $f(t)\in \overset{\, \circ}H$ satisfying \eqref{star3} :
\begin{gather*}
f\left(t\right)=
\mathop{P}\limits^{\circ } \int _{\mathcal{B}^1}\sum _{i,j=1}^{n}u_{i} \left(t,\mu \right) d\left(\sigma _{a,b} \left(\mu \right)\right)_{ij}   \int _{\mathcal{I}}u_{j}^{*} \left(s,\mu \right)m\left[f\left(s\right)\right] ds=\\=\mathop{P}\limits^{\circ } \int _{\mathcal{B}^1}\sum _{i,j=1}^{n}u_{i} \left(t,\mu \right) d\left(\sigma _{a,b} \left(\mu \right)\right)_{ij}   \int _{\mathcal{I}}\sum\limits_{k=0}^{s/2}\left(u_{j}^{\left(k\right)} \left(s,\mu \right)\right)^*\mathrm{m}_{k} \left[f\left(s\right)\right] ds.
\end{gather*}
\end{theorem}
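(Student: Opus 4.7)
\medskip
\noindent\textbf{Proof proposal.} The plan is to imitate the proof of Theorem \ref{th7}, but to replace the representation of $R(\lambda)f$ built from the characteristic operator $M(\lambda)$ and $[X_\lambda]_1$ by the Weyl-type representation (\ref{prop13formula}) from Proposition \ref{rm32}. The core identity to establish is the analog of (\ref{GEQ__92_}), namely
\begin{equation*}
(E_{\alpha,\beta}f,g)_{L^2_m(\mathcal{I})}=\int_\alpha^\beta (d\sigma_{a,b}(\mu)\varphi(\mu,f),\varphi(\mu,g))
\end{equation*}
for $f,g\in\overset{\circ}H$ and $[\alpha,\beta]\subset\mathcal{B}^1$, with $\varphi(\mu,f)=\int_\mathcal{I}(u_1(t,\mu),\ldots,u_n(t,\mu))^* m[f(t)]\,dt$. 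Once this is proved, the inversion formulae, the Parseval equality and the Bessel inequality follow from (\ref{star3}) exactly as in the proof of Theorem \ref{th7}, and the matrix form displayed at the end of the statement is obtained as in Proposition \ref{prop21} by expanding $\sigma_{a,b}$ in its operator entries and rewriting $m[f]$ through the quasi-coefficients $\mathrm{m}_k$ of (\ref{mk}).

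To produce the identity, I would start from the Stieltjes inversion
\begin{equation*}
(E_{\alpha,\beta}f,g)_{L^2_m(\mathcal{I})}=\lim_{\varepsilon\downarrow 0}\frac{1}{2\pi i}\int_\alpha^\beta \bigl((R(\mu+i\varepsilon)-R(\mu-i\varepsilon))f,g\bigr)_m\,d\mu,
\end{equation*}
substitute (\ref{prop13formula}) for $R(\lambda)f$ in the two half-plane integrals, and use the decomposition (\ref{GEQ__115_}) of $(v_1,\ldots,v_n)$ to split the $R(\lambda)f$ into an ``$X$-part'' (coming from $[X_\lambda]_1\binom{b(\lambda)}{-a(\lambda)}K^{-1}(\lambda)$) and an ``$m_{a,b}$-part'' (coming from $(u_1,\ldots,u_n)(t,\lambda)m_{a,b}(\lambda)$). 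For the $m_{a,b}$-part the generalized Stieltjes inversion formula \cite[p.~803]{Shtraus1}, \cite[p.~952]{Bruk1} applied to the Nevanlinna operator-function $m_{a,b}(\lambda)$ (well-defined by Lemma \ref{lm11} and Lemma \ref{lm12}) yields, after rewriting the weighted integrals $\int X_{\bar\lambda}^*W(\cdot,l_{\bar\lambda},m)F(\cdot,l_{\bar\lambda},m)\,dt$ as $\int([X_{\bar\lambda}]_1)^* m[f]\,dt$ (as done in (\ref{GEQ__91_})) and then reducing via $u_j(t,\mu)=[X_\mu(t)]_1\binom{a(\mu)}{b(\mu)}$, precisely the desired expression involving $d\sigma_{a,b}$ and $\varphi$.

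The main obstacle, and what the hypotheses on $v(\lambda)$ exist to overcome, is to show that the $X$-part contributes nothing to the Stieltjes inversion over any $[\alpha,\beta]\subset\mathcal{B}^1$. Because $v(\lambda)$ is analytic at each point of $\mathcal{B}^1$ and unitary there, formulas (\ref{alambda}), (\ref{barlambda}) let $\{-a(\lambda),b(\lambda)\}$ be continued analytically through each such real point with boundary values from $\mathbb{C}_\pm$ coinciding, and consequently $K(\lambda)$ of (\ref{GEQ__67ad1_}) and (by Lemma \ref{lm12} together with unitarity) $K^{-1}(\lambda)$, as well as $\binom{b(\lambda)}{-a(\lambda)}K^{-1}(\lambda)$, all extend analytically across $\mathcal{B}^1$; $X_\lambda(t)$ is entire in $\lambda$ by construction. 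Thus, for any fixed $t$ and $s$, the $X$-part of $((R(\lambda)-R(\bar\lambda))f,g)_m$ has boundary values from $\mathbb{C}_+$ and $\mathbb{C}_-$ that agree on $\mathcal{B}^1$, so it is holomorphic across $[\alpha,\beta]$; hence $\lim_{\varepsilon\downarrow 0}\Im(\cdots)=0$ pointwise on $(\alpha,\beta)$, and the estimate (\ref{add_ineq})--(\ref{add_ineq+}) supplies the uniform-in-$\varepsilon$ domination needed to pass the limit through the $d\mu$-integration. The $X$-part therefore vanishes, leaving only the $m_{a,b}$-part, which completes the proof of the identity and hence of the theorem.
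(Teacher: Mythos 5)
Your proposal is correct and follows essentially the same route as the paper, whose proof consists of the single remark that the argument of Theorem \ref{th7} and Proposition \ref{prop21} is repeated using the Weyl-type resolvent representation of Propositions \ref{rm21}/\ref{rm32} and Lemma \ref{lm12}; your explicit splitting of \eqref{prop13formula} via \eqref{GEQ__115_} into the $m_{a,b}$-part (handled by the generalized Stieltjes inversion formula) and the part whose boundary values from $\mathbb{C}_{\pm}$ coincide on $\mathcal{B}^1$ (and therefore drops out) is exactly the intended mechanism. A minor remark: since $f,g\in\overset{\circ}{H}$ have compact support, the passage to the limit under the $d\mu$-integration already follows from local uniform boundedness of the analytic kernels on a neighbourhood of $[\alpha,\beta]$, so the appeal to \eqref{add_ineq}--\eqref{add_ineq+} for domination is not needed at that step.
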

 
The proof is carried out in the same way as the proofs of Theorem \ref{th7} and Proposition \ref{prop21} with the help of Proposition \ref{rm21} and Lemma \ref{lm12}. 

Let us notice that in contrast to operator spectral function $\sigma_{a,b}(\mu)$ from Theorem \ref{th24} the scalar spectral function in \cite{Shtraus2} was constructed with the help of different formulae that corresponds to such intervals of real axis where $v\left(\mu \right)\ne -1$ or $v\left(\mu \right)\ne 1$. But already in matrix case it is impossible to construct the spectral matrix according to \cite{Shtraus2} since here for some real $\lambda$ (and even for any real $\lambda$) $(v(\lambda)+I_n)^{-1}$ and $(v(\lambda)-I_n)^{-1}$ may simultaneously do not belong to $B(\mathcal{H}^n)$.

\begin{remark}
Let contraction $v\left(\lambda \right)\in B\left(\mathcal{H}^{n} \right)$ is analytical in any point $\lambda \in \bar{{\mathbb C}}_{+} \bigcap \mathcal{B}$ and is unitary in any point $\lambda \in \mathcal{B}^1\not=\varnothing $. Let $\dim \mathcal{H}<\infty $. Then $v\left(\lambda \right)$ satisfy conditions of Lemma \ref{lm12}.

 If $\dim \mathcal{H}=\infty $ in general it is not valid. Namely let domain $D\subset {\mathbb C}_{+} $, $dist\left\{\bar{D},\, \left[-a,a\right]\right\}>0\, \, \forall a\in {\mathbb R}_{+}^{1} $; $set\left\{\lambda _{k} \right\}_{k=1}^{\infty } \subset D$ in dense in $D$. Let us consider in $\mathcal{H}^{n} =\left(l^{2} \right)^{n} $ the following operator $v\left(\lambda \right)=v_1\left(\lambda \right)\oplus I_{n-1} $, where $v_1\left(\lambda \right)=diag\left\{\frac{\lambda -\lambda _{k} }{\lambda -\bar{\lambda }_{k} } \right\}_{k=1}^{\infty } $. This operator is analytical in any point $\lambda \in \bar{{\mathbb C}}_{+} $, is a contraction for $\lambda \in {{\mathbb C}}_{+} $ and is unitary for $\lambda \in {\mathbb R}^{1} $. But for this operator the set $S=\left\{\lambda \in {\mathbb C}_{+} :v^{-1} \left(\lambda \right)\notin B\left(\mathcal{H}^n\right)\right\}=\bar{D}$.

\end{remark}

Finally, we note that, obviously, an analogue of Theorem \ref{th24} is valid for $\mathcal{I}=(0,b)$, $b<\infty$.

\end{document}